\newcounter{notecounter}
\newcounter{rem}
\newenvironment{remark}{\refstepcounter{rem} \noindent \textbf{Remark \therem.} \small \begin{sf}}
{\end{sf}}
\crefname{rem}{remark}{remark}
\Crefname{rem}{Remark}{Remark}
\newenvironment{hyp}[1]{
    \begin{enumerate}[label=\textbf{\sf(#1\arabic*)},resume=hyp#1]\begin{sf}}{\end{sf}\end{enumerate}}
\newtheorem{lemma}{Lemma}
\newtheorem{theorem}{Theorem}
\newtheorem{proposition}{Proposition}
\theoremstyle{remark}
\begin{document}

\newcommand{\arginf}{\mathrm{arginf}}
\newcommand{\argmin}{\mathrm{argmin}}
\newcommand{\argmax}{\mathrm{argmax}}
\newcommand{\as}{a.s.}
\newcommand{\asconv}[1]{\stackrel{#1-a.s.}{\rightarrow}}
\newcommand{\Aset}{\mathsf{A}}
\newcommand{\ball}[1]{\mathsf{B}(#1)}
\newcommand{\tball}[1]{\mathsf{\bar B}(#1)}
\newcommand{\beps}{{\boldsymbol{\epsilon}}}
\newcommand{\bproof}{\textbf{Proof :}\quad}
\newcommand{\bmuf}[2]{b_{#1,#2}}
\newcommand{\bz}{{\mathbf{z}}}
\newcommand{\card}{\mathrm{card}}
\newcommand{\chunk}[3]{{#1}_{#2:#3}}
\newcommand{\convprob}[1]{\stackrel{#1-\text{prob}}{\rightarrow}}
\newcommand{\convprobdown}[2]{\stackrel[#2]{#1-\text{prob}}{\rightarrow}}
\newcommand{\Cov}{\mathbb{C}\mathrm{ov}}
\newcommand{\CPE}[2]{\PE\lr{#1| #2}}
\newcommand{\Cset}{{\mathsf{C}}}
\renewcommand{\det}{\mathrm{det}}
\newcommand{\dimlabel}{\mathsf{m}}
\newcommand{\dimU}{\mathsf{q}}
\newcommand{\dimX}{\mathsf{d}}
\newcommand{\dimY}{\mathsf{p}}
\newcommand{\dlim}{\Rightarrow}
\newcommand{\e}[1]{{\left\lfloor #1 \right\rfloor}}
\newcommand{\eproof}{\quad \Box}
\newcommand{\eremark}{</WRAP>}
\newcommand{\eqdef}{:=}
\newcommand{\eqlaw}{\stackrel{\mathcal{L}}{=}}
\newcommand{\eqsp}{\;}
\newcommand{\Eset}{ {\mathsf E}}
\newcommand{\Esigma}{ {\mathcal{E}}}
\newcommand{\esssup}{\mathrm{essup}}
\newcommand{\falph}{f}
\newcommand{\fr}[1]{{\left\langle #1 \right\rangle}}
\renewcommand{\geq}{\geqslant}
\newcommand{\hchi}{\hat \chi}
\newcommand{\Id}{\mathrm{Id}}
\newcommand{\img}{\text{Im}}
\newcommand{\indi}[1]{\mathbf{1}_{#1}}
\newcommand{\indiacc}[1]{\mathbf{1}_{\{#1\}}}
\newcommand{\indin}[1]{\mathbf{1}\{#1\}}
\renewcommand{\j}{j}
\newcommand{\klbck}[2]{\mathrm{K}\lr{#1||#2}}
\renewcommand{\k}{k}
\newcommand{\lo}{k}
\renewcommand{\l}{\mathcal{L}}
\newcommand{\law}{\mathcal{L}}
\newcommand{\labelinit}{\pi}
\newcommand{\labelkernel}{Q}
\renewcommand{\leq}{\leqslant}
\newcommand{\lk}{\tilde{\mathcal{L}}}
\newcommand{\lone}{\mathsf{L}_1}
\newcommand{\lrav}[1]{\left|#1 \right|}
\newcommand{\lr}[1]{\left(#1 \right)}
\newcommand{\lrb}[1]{\left[#1 \right]}
\newcommand{\lrc}[1]{\left\{#1 \right\}}
\newcommand{\lrcb}[1]{\left\{#1 \right\}}
\newcommand{\ltwo}[1]{\PE^{1/2}\lrb{\lrcb{#1}^2}}
\newcommand{\Ltwo}{\mathrm{L}^2}
\newcommand{\mc}[1]{\mathcal{#1}}
\newcommand{\mcbb}{\mathcal B}
\newcommand{\mcf}{\mathcal{F}}
\newcommand{\mcg}{\mathcal{G}}
\newcommand{\meas}[1]{\mathrm{M}_{#1}}
\newcommand{\nell}{_n^\k}
\newcommand{\nelln}{_n^{\k_n}}
\newcommand{\norm}[1]{\left \| #1 \right \|}
\newcommand{\norms}[2]{\left\|#1\right\|_{#2}}
\newcommand{\normsx}[3]{\left\|#1\right\|_{\mathsf{L}^{#2}_{#3}}}
\newcommand{\normsxsimp}[2]{\left\|#1\right\|_{\mathsf{L}_{#2}}}
\newcommand{\note}[2]{\refstepcounter{notecounter} \begin{leftbar}  \textcolor{red}{\textbf{#1}} \textcolor{violet}{(\textbf{Note} \thenotecounter)}: \begin{sf}{\color{blue} #2} \end{sf}  \end{leftbar}}
\newcommand{\nset}{\mathbb N}
\newcommand{\one}{\mathsf{1}}
\newcommand{\phiv}{{\phi^\star}}
\newcommand{\pscal}[2]{\langle #1,#2\rangle}
\newcommand{\PE}{\mathbb E}
\newcommand{\PP}{\mathbb P}
\newcommand{\Psif}{\Psi_f}
\newcommand{\psconv}{\stackrel{\PP-a.s.}{\rightarrow}}
\newcommand{\qset}{\mathbb Q}
\newcommand{\rmd}{\mathrm d}
\newcommand{\rme}{\mathrm e}
\newcommand{\rmi}{\mathrm i}
\newcommand{\Rset}{\mathbb{R}}
\newcommand{\rset}{\mathbb{R}}
\newcommand{\rsetpos}{\mathbb{R}^+_*}
\newcommand{\rti}{\sigma}
\newcommand{\seq}[2]{\lrc{#1\eqsp: \eqsp #2}}
\newcommand{\set}[2]{\lrc{#1\eqsp: \eqsp #2}}
\newcommand{\sg}{\mathrm{sgn}}
\newcommand{\supnorm}[1]{\left\|#1\right\|_{\infty}}
\renewcommand{\t}{\tilde}
\newcommand{\tmu}{ {\tilde{\mu}}}
\newcommand{\tphi}{\tilde \phi}
\newcommand{\ttheta}{\tilde \theta}
\newcommand{\thv}{{\theta^\star}}
\newcommand{\Tset}{ {\mathsf{T}}}
\newcommand{\Tsigma}{ {\mathcal{T}}}
\newcommand{\tv}[1]{\left\|#1\right\|_{TV}}
\newcommand{\unif}{\mathrm{Unif}}
\newcommand{\Uset}{{\mathsf U}}
\newcommand{\Usigma}{{\mathcal U}}
\newcommand{\V}{{\mathcal V}}
\newcommand{\Var}{\mathbb{V}\mathrm{ar}}
\newcommand{\Vratio}{\mathbb{V}}
\newcommand{\Vset}{\mathcal{V}}
\newcommand{\vz}{z}
\newcommand{\weakconv}{\stackrel{\mathcal{L}}{\rightsquigarrow}}
\newcommand{\Wset}{\mathcal{W}}
\newcommand{\xivp}[1]{{\xi^\star_{#1}}}
\newcommand{\xiv}{{\xi^\star}}
\newcommand{\x}{\mathbf{x}}
\newcommand{\X}{{\mathfrak{X}}}
\newcommand{\Xset}{{\mathsf X}}
\newcommand{\Xsigma}{\mathcal X}
\newcommand{\Yset}{{\mathsf Y}}
\newcommand{\Ysigma}{\mathcal Y}
\newcommand{\zset}{\mathbb{Z}}
\newcommand{\Zset}{\mathsf{Z}}
\newcommand{\Zsigma}{\mathcal{Z}}

\begin{frontmatter}
\title{On the Asymptotics of Importance Weighted Variational Inference}
\runtitle{Asymptotics of Importance Weighted Variational Inference}
\begin{aug}
\author[A]{\fnms{Badr-Eddine}~\snm{Cherief-Abdellatif}\ead[label=e1]{badr-eddine.cherief-abdellatif@cnrs.fr}},
\author[B]{\fnms{Randal}~\snm{Douc}\ead[label=e2]{randal.douc@telecom-sudparis.eu}}
\author[C]{\fnms{Arnaud}~\snm{Doucet}\ead[label=e3]{doucet@stats.ox.ac.uk}}
\and
\author[B]{\fnms{Hugo}~\snm{Marival}\ead[label=e4]{hugo.marival@telecom-sudparis.eu}}

\address[A]{CNRS, LPSM, Sorbonne University \printead[presep={,\ }]{e1}}

\address[B]{SAMOVAR, Telecom Sudparis, Institut Polytechnique de
  Paris\printead[presep={,\ }]{e2,e4}}
\address[C]{Oxford University\printead[presep={,\ }]{e3}}

\end{aug}

\begin{abstract}
For complex latent variable models, the likelihood function is not available in closed form. In this context, a popular method to perform parameter estimation is Importance Weighted Variational Inference. It essentially maximizes the expectation of the logarithm of an importance sampling estimate of the likelihood with respect to both the latent variable model parameters and the importance distribution parameters, the expectation being itself with respect to the importance samples. Despite its great empirical success in machine learning, a theoretical analysis of the limit properties of the resulting estimates is still lacking. We fill this gap by establishing consistency when both the Monte Carlo and the observed data sample sizes go to infinity simultaneously. We also establish asymptotic normality and efficiency under additional conditions relating the rate of growth between the Monte Carlo and the observed data samples sizes. We distinguish several regimes related to the smoothness of the importance ratio.
\end{abstract}

\begin{keyword}[class=MSC]
    \kwd[Primary ]{62F12}
    \kwd{62F30}
    \kwd{62B10}
    \kwd[; secondary ]{60F05}
    \kwd{68T07}
    \end{keyword}
    
    \begin{keyword}
    \kwd{Asymptotic theory}
    \kwd{Consistency}
    \kwd{Asymptotic normality}
    \kwd{Variational Inference}
    \kwd{Importance weighted variational inference}
    \end{keyword}
    
\end{frontmatter}

\section{Introduction}
Consider a latent variable models specified by a joint density $p_\theta(x,z)$, where $x$ is an observed variable while $z$ is a latent variable. The (marginal) likelihood function is then computed from the joint distribution via $p_\theta(x)=\int p_\theta(x,z) dz$ and is assumed not to be available in closed form.
Variational Inference (VI) addresses the intractability of likelihood calculations by optimizing a tractable lower bound on the log-likelihood \citep{Jordan1999,blei2017variational}. This paper focuses on Importance Weighted Variational Inference (IWVI) proposed by \cite{IWAE2015}, itself inspired by \cite{bornschein2014reweighted}, which was originally developed to train unsupervised deep generative models like variational autoencoders \citep{kingma2013auto,rezende2014stochastic}.

IWVI creates a tight log-likelihood lower bound by calculating the expectation of an importance sampling likelihood estimate and maximizing this bound with respect to model parameters $\theta$  and the parameter of the importance sampling distribution; the expectation being with respect to the importance samples. 

\subsection{Importance Weighted Variational Inference}

To introduce a more detailed mathematical framework, we consider some data $x \in \Xset$ generated by a latent variable $z \in \Zset$ through a joint model with density $p_\theta(x,z)=p_\theta(x|z)p_\theta(z)$ with respect to some reference measure on $\Xset\times\Zset$. 
Since $z$ is hidden and only $x$ is observed, the log-(marginal) likelihood is defined as
$$
\log p_\theta(x) = \log \Bigl(\int_\mathcal{Z} p_\theta(x,z) dz\Bigr) .
$$

Given any positive unbiased estimate $\hat{p}_\theta(x)$ of the likelihood $p_\theta(x)=\mathbb{E}[\hat{p}(x)]$, VI exploits Jensen's inequality to obtain a so-called Evidence Lower Bound (ELBO) $\mathbb{E}[\log\hat{p}(x)]\leq\log\mathbb{E}[\hat{p}(x)]=\log p(x)$ that is well suited for stochastic optimization. In this paper, we use importance sampling (IS) to define an unbiased estimate $\hat{p}^{(k)}_\theta(x)$ of the (marginal) likelihood $\hat{p}_{\theta}(x)$ of the form
$$
\hat{p}^{(k)}_{\theta}(x) := \frac{1}{k} \sum_{\ell=1}^k\frac{p_\theta(x,z^\ell)}{q_\phi(z^\ell|x)},
$$
where $z^\ell \overset{\textup{i.i.d.}}{\sim}  q_\phi(\cdot|x)$,  $q_\phi(\cdot|x)$ is a so-called importance distribution, $p_\theta(x,z)/q_\phi(z|x)$ the importance ratio and $\phi$ the variational parameter. As $k$ increases, we obtain a low variance estimate of estimate of $p_\theta(x)$, and thus  $\mathbb{E}[\log\hat{p}^{(k)}_{\theta}(x)]$ is a tight lower bound on $\log p_\theta(x)$ that is the objective to maximize. 
Note that even though we focus on IS in this paper, many alternative Monte Carlo approximations have recently been suggested to address challenging problems in machine learning, including sequential models \citep{maddison2017filtering,naesseth2018variational,le2018auto}, missing values \citep{mattei2019miwae,ipsen2020not}, causal inference \citep{mayer2020missdeepcausal}, or general Bayesian models \citep{domke2018importance,domke2019divide}.

Despite its popularity in machine learning, very few theoretical results are available for IWVI, and most of them focus on results quantifying the gap between the log-likelihood and its variational lower bound. It was first shown in \cite{IWAE2015} that the gap is not increasing with $k$, i.e. $\mathbb{E}[\log\hat{p}^{(k)}_\theta(x)]\leq\mathbb{E}[\log\hat{p}^{(k+1)}_\theta(x)]$ while $\mathbb{E}[\log\hat{p}^{(k)}_\theta(x)]\rightarrow\log p_\theta(x)$ in probability as $k\rightarrow\infty$ when the importance weights are bounded. 
Under moments assumptions on the importance weight, \cite{maddison2017filtering,nowozin2018debiasing} later provided expansions of the form 
\begin{equation}
\label{devIWELBO}
\mathbb{E}[\log\hat{p}^{(k)}_\theta(x)] = \log p_\theta(x) - \frac{\mathbb{V}[\hat{p}^{(1)}_\theta(x)]}{2p_\theta(x)^2} \cdot \frac{1}{k} 
+ \mathcal{O}\left(\frac{1}{k^2}\right).
\end{equation}
 Such formulas offer an interesting perspective on the IWVI objective, it is simply give by the expectation of a consistent but biased estimator of the log-marginal likelihood, with a bias of order $\mathcal{O}(1/k)$ given by the relative variance of the importance sampling of $p_\theta(x)$. Thus, increasing the number $k$ of importance samples tightens the bound and reduces the bias, and the spread of the distribution of the importance weight fully characterizes the variational gap. 
\cite{domke2018importance}, \cite{klys2018joint}, \cite{rainforth2018nesting} and \cite{dhekane2021improving} extended and generalized the previous results, while \cite{huang2019note} and \cite{huang2019hierarchical} suggested looking at the variance of the logarithm of the importance weights in order to obtain information on the tightness of the bounds. More recently \cite{mattei2022uphill} and \cite{daudel2023alpha}  have further complemented these results.

Another interesting line of theoretical work, which we will not explore further in this paper, deals with optimization issues. Indeed, in practice, the Monte Carlo objective is optimized using stochastic gradient descent with respect to the model and the variational parameters, which can cause some problems. Perhaps counterintuitively, the performance of the IWVI as a function of the variational parameter does not necessarily improve as the number $k$ of importance samples increases, although the lower bound becomes tighter. As demonstrated in \cite{rainforth18b}, this is formalized by the fact that the signal-to-noise ratio (SNR) of the standard mini-batch estimator of the gradient with respect to the variational parameter goes to $0$ when $k\rightarrow\infty$. This is due to the fact that although the expectation of the gradient decreases, its variance decreases more slowly without ever vanishing. This issue can be mitigated using reparameterization ideas \cite{tucker2018doubly}. Additional studies of the SNR for IWVI objectives can be found in \cite{lievin2020optimal} and \cite{daudel2023alpha}.

\subsection{Maximum Simulated Likelihood Estimator}
While the theoretical study of the limit behavior of the IWVI estimate has never been performed, Maximum Simulated Likelihood Estimation (MSLE) is a closely related procedure proposed in the early 1980s \citep{lerman1981use,pakes1986patents,pakes1989simulation,mcfadden1989method} that has been thoroughly studied in both the statistics and econometrics communities, see e.g. \cite{geyer1994convergence}, \cite{gourieroux1996simulation} and \cite{train2003discrete}.

Similarly to IWVI, MSLE considers an unbiased importance sampling estimate of the likelihood $\hat{p}^{(k)}_\theta(x)$. However, instead of maximizing $\mathbb{E}[\log \hat{p}^{(k)}_\theta(x)]$ using stochastic gradient techniques, it does generate importance samples only once sand then maximizes the corresponding simulated likelihood function $\log \hat{p}^{(k)}_\theta(x)$. To the best of our knowledge, the connections between MSLE and IWVI have never been noticed previously in the literature. We highlight here the main differences between IWVI and MSLE as follows:
\begin{itemize}
    \item The objective of MSLE is a random function $\log \hat{p}^{(k)}_\theta(x)$, while IWVI maximizes a deterministic one $\mathbb{E}[\log \hat{p}^{(k)}_\theta(x)]$.
    \item IWVI optimizes jointly both the model parameter $\theta$ and the variational parameter $\phi$, while the two problems are addressed separately for MSLE.
    \item The implementation of IWVI in the machine learning community has been largely accompanied by the development of scalable black-box stochastic optimization techniques over both the model and the variational parameters, while the emphasis in econometrics has been more on the design of ad-hoc models and proposals with provable guarantees.
\end{itemize}

We review here the theoretical properties of MSLE \citep{lee1992efficiency,lee1995asymptotic,gourieroux1996simulation,sung2007monte}. The limit properties of the corresponding model estimator actually depend crucially on the order of the importance sample size $k$ used, and more precisely on the relationship between the number of draws $k$ and the observed data sample size $n$ denoted $(x_1,...,x_n)$. If $k$ is considered fixed, then the model estimator does not converge to the true parameter due to a so-called simulation bias \citep{gourieroux1996simulation,train2003discrete}. When $k$ increases with $n$, i.e. when the number of draws increases to infinity with the observed data sample size, this simulation bias disappears and the estimator is then consistent. The question of asymptotic efficiency is, however, more subtle. It depends on the rate at which $k$ grows w.r.t. $n$ and depends whether the draws are overlapping or independent, i.e. whether the same importance samples $\{z^\ell\}_{1\leq \ell\leq k}$ are used for all the observations or whether different importance samples are generated for each observation. In the case of overlapping draws, we obtain efficiency, i.e. asymptotic equivalence to the exact maximum likelihood estimator (MLE), when $k/n\rightarrow\infty$. Otherwise the simulation variance dominates the inevitable variance of the true MLE \citep{sung2007monte}. However when the draws are independent, then MSLE is efficient when $k/\sqrt{n}\rightarrow\infty$, and the estimator is not even asymptotically normal in the opposite case \citep{lee1992efficiency,lee1995asymptotic}. All these results have been established under strong regularity conditions for the simulation design. The properties of MSLE can be summarized as follows:
\begin{itemize}
    \item If $k$ is fixed and $n\rightarrow\infty$, then the estimator is inconsistent.
\end{itemize}
\begin{itemize}
    \item For overlapping draws, if both $k\rightarrow\infty$ and $n\rightarrow\infty$:
    \begin{itemize}
        \item If $k$ rises slower than $n$, then the estimator is consistent but not efficient.
        \item If $k$ rises faster than $n$, then the estimator is consistent and efficient.
    \end{itemize}
\end{itemize}
\begin{itemize}
    \item For independent draws, if both $k\rightarrow\infty$ and $n\rightarrow\infty$:
    \begin{itemize}
        \item If $k$ rises slower than $\sqrt{n}$, then the estimator is consistent but not efficient.
        \item If $k$ rises faster than $\sqrt{n}$, then the estimator is consistent and efficient.
    \end{itemize}
\end{itemize}

\subsection{Outline and contributions} 
We provide in this paper the first asymptotic results for IWVI estimates. In particular, we make the following two main contributions:
\begin{itemize}
    \item We show in Section 2 that the model parameter estimator is consistent when $n$ and $k$ both tend to infinity. Additionally, we prove that the variational parameter estimator is also consistent when $n$ and $k$ tend to infinity, and that this limit is the minimizer of the expectation of the first-order variance term in \eqref{devIWELBO}, that is of the relative variance of the importance sampling estimate of the likelood.
\end{itemize}
\begin{itemize}
   \item We use the reparameterization trick \citep{kingma2013auto} in Section 3 to prove that IWVI is even asymptotically efficient when $k$ grows fast enough. As for MSLE, we distinguish several regimes, which depend here on the smoothness of the importance ratio distribution.
\end{itemize}

All proofs and technical results are detailed in the appendices. We emphasize that the assumptions used in our study are very weak, requiring mainly the existence of low-order moments for the importance ratios and their logarithms, in addition to some minor regularity assumptions. These kinds of conditions, which are often encountered when studying the tightness of the variational bound, are much weaker that those required in the asymptotic study of MSLE.

\subsection{Notations and background} 
We close this section by introducing the precise notations we will use in the rest of the paper. We assume that $\seq{\x_i}{1\leq i\leq n}$ is a sequence of independent and identically distributed (i.i.d.) random vectors (with unknown distribution) on the probability space $(\Omega,\mcf,\PP_\star)$ that take values in a measurable space $(\Xset,\Xsigma)$ where $\Xset \subset \rset^{d_x}$. We denote by $\PE_\star$ the associated expectation. Conditionally on $\X=\seq{\x_i}{1\leq i \leq n}$, we consider a family of independent random sequences $\seq{\bz_i=(\bz_i^1,\ldots,\bz_i^\lo)}{1\leq i\leq n}$ such that $\seq{\bz_i^\ell}{1\leq \ell\leq \lo}$ are i.i.d. random vectors taking values on a measurable space $(\Zset,\Zsigma)$ where $\Zset \subset \rset^{d_z}$, with density $\vz_i \mapsto q_\phi(\vz_i|\x_i)$ with respect to (w.r.t.) the Lebesgue measure on $\rset^{d_z}$. Given a model with joint density $p_\theta(x,z)=p_\theta(x|z)p_\theta(\vz)$ w.r.t. the Lebesgue measure on $\rset^{d_x}\times\rset^{d_z}$ where $x$ denotes the observation and $z$ the latent variable, IWVI aims at finding the parameter $\theta$ which best describes the observations $\X=\seq{\x_i}{1\leq i \leq n}$ by maximizing the Monte Carlo Objective (MCO)
\begin{equation}
    \label{eq:def:xi:n:ell}
    \t \xi  \nell=(\ttheta \nell,\tphi \nell)=\arg\max\limits_{\xi=(\theta,\phi)} \lk \nell (\xi) \quad \mbox{where}  \quad \lk \nell (\xi)\eqdef\sum_{i=1}^n \PE^\phi_\X\lrb{\log\lr{\frac 1 k \sum_{\ell=1}^k \frac{p_\theta(\x_i,\bz_i^\ell)}{q_\phi(\bz_i^\ell|\x_i)}} }, 
\end{equation}
where the notation $\PE^\phi_\X$ stands for the conditional expectation of $\seq{\bz_i=(\bz_i^1,\ldots,\bz_i^\lo)}{1\leq i\leq n}$ w.r.t. $\X=\seq{\x_i}{1\leq i \leq n}$. The complete IWVI parameter $\t \xi  \nell=(\ttheta \nell,\tphi \nell)$ is maximized over $\Xi=\Theta \times \Phi$ where $\Theta$ and $\Phi$ are two compact sets in $\rset^{d_\theta}$ and $\rset^{d_\phi}$. 
Note that we do not necessarily assume that the true data generating process $\PP_\star$ is contained in the given model of density $p_\theta(x)=\int_\Zset p_\theta(x|z)p_\theta(\vz) dz$ nor that $\PP_\star$ is dominated by the Lebesgue measure.

In what follows, for any $\ell \in\nset$, $|\cdot|$ denotes the Euclidean norm on $\rset^\ell$ (i.e. we suppress the dependence on the dimension $\ell$ in the notation when no ambiguity occurs) and for any $\rset^\ell$-valued random vector and any $s>0$, 
\begin{equation} 
    \normsx{U}{\phi}{s}=(\PE^\phi_\X[|U|^s])^{1/s} .
\end{equation}  

\section{Consistency}
This section aims to establish the strong consistency of both the IWVI model parameter $\ttheta \nell$ and variational parameter $\tphi \nell$. Note that both $n$ and $\lo$ go to infinity simulateneously in this section. Proofs are deferred to the appendices.

\subsection{Consistency of the model parameter}

Since the IWVI objective tends to the exact log-marginal likelihood function as $\lo\rightarrow\infty$ \citep{maddison2017filtering,nowozin2018debiasing}, it may seem natural to think that the IWVI model parameter $\ttheta \nell$ is consistent under relevant conditions when both $n \wedge \lo \to \infty$, as it is the case for MSLE. To properly obtain the convergence of $\ttheta \nell$ to the maximizer $\thv$ of the expected log-likelihood when both the observed data and the Monte Carlo sample sizes go to infinity simultaneously, we introduce the following assumptions:
\begin{hyp}{A}
    \item \label{hyp:theta:star}
    There exists $\thv \in \Theta$ such that 
    $$ 
     \{\thv\}=\arg\max\limits_{\theta \in \Theta} \PE_\star \lrb{\log p_\theta(\x)}.
    $$
     
\end{hyp}

\begin{hyp}{A}
    \item \label{hyp:q}
    For all $(\theta,\phi) \in \Theta \times \Phi$, 
    $$ 
     \PE_\star\lrb{\int q_\phi(\vz|\x) \log^+ \lr{\frac{q_\phi(\vz|\x)}{p_\theta(\vz|\x)}} \rmd z} <\infty.
    $$
\end{hyp}

\begin{hyp}{A}
    \item \label{hyp:unif}
    \begin{enumerate}[(i)]
        \item $\PP_\star-\as$, the function $\theta\mapsto p_\theta(\x)$ is upper-semi
          continuous. 
        \item $\PE_\star\lrb{\sup\limits_{\theta\in \Theta} \log^+ p_\theta(\x)}<\infty$. 
    \end{enumerate}
\end{hyp}

Assumption \ref{hyp:theta:star} is frequently employed to establish the consistency and asymptotic normality of maximum likelihood estimators in the setting of completely observable variables \citep{VanDerVaart1999}. This assumption is natural and postulates the existence of a unique parameter $\thv$ that maximizes the expected maximum likelihood, corresponding to the best approximation in the model of the data generating process $\PP_\star$ as measured by the Kullback-Leibler distance. When the model is correctly specified, this parameter is the true parameter itself. Assumption \ref{hyp:q} is a mild assumption that ensures the finiteness of a slight variant of the Kullback-Leibler divergence between the variational and posterior distributions, which is necessary for the existence of the quantities being considered. This assumption is typically satisfied when the variational and posterior distributions are isotropic Gaussians respectively centered at $\phi$ and $\theta$. Another example satisfying this assumption is the Gaussian linear setting introduced by \cite{rainforth18b}, where the prior distribution over $\bz$, whose density is denoted $p_\theta(\cdot)$, is a Gaussian $\mathcal{N}(\theta,I_{d_\bz})$ with $\theta\in\mathbb{R}^{d_\bz}$, the conditional density $p_\theta(\cdot|\bz)$ follows a Gaussian $\mathcal{N}(\bz,I_{d_\x})$, and $q_\phi(\cdot|\x)$ is the density of a Gaussian $\mathcal{N}(A\x+b,2/3 I_{d_z})$, where $A=\textnormal{diag}(a)$ is a diagonal matrix and $\phi=(a,b)\in\mathbb{R}^{d_\bz}\times\mathbb{R}^{d_\bz}$. Finally, Assumption \ref{hyp:unif} is a standard assumption required to prove the consistency of the regular MLE for general models, see \cite{Ferguson1996}.

We are now ready to present Theorem \ref{thm:consistency}, which is proved in Appendix \ref{proof:thm:consistency}.

\begin{theorem}
    \label{thm:consistency}
    Assume \ref{hyp:theta:star}-\ref{hyp:q}-\ref{hyp:unif}. Then, $\PP_\star-\as$,
    $$ 
    \lim_{n\wedge \lo \to \infty} \t \theta \nell = \thv.
    $$
\end{theorem}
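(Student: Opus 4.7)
The plan is to run a Wald-type consistency argument for the sample log-likelihood $L_n(\theta)\eqdef\frac{1}{n}\sum_{i=1}^n \log p_\theta(\x_i)$, using the IWVI objective evaluated at $(\ttheta \nell,\tphi \nell)$ to force $L_n(\ttheta \nell)$ to concentrate at $\ell(\thv)\eqdef\PE_\star[\log p_{\thv}(\x)]$, which under \ref{hyp:theta:star} is the unique maximum of $\theta\mapsto\PE_\star[\log p_\theta(\x)]$.

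The first ingredient is a deterministic upper bound. Conditionally on $\X$ the samples $\bz_i^\ell$ are i.i.d.\ from $q_\phi(\cdot|\x_i)$, so unbiasedness of the importance sampling estimator combined with Jensen's inequality yields, for every $(\theta,\phi)\in\Xi$,
\begin{equation*}
    \frac{1}{n}\lk \nell(\theta,\phi) \leq L_n(\theta).
\end{equation*}

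The main technical step, and the principal obstacle because it involves a joint limit in $(n,\lo)$, is a matching lower bound at $(\thv,\phi_0)$ for an arbitrarily chosen $\phi_0\in\Phi$:
\begin{equation*}
    \lim_{n\wedge\lo\to\infty}\frac{1}{n}\lk \nell(\thv,\phi_0) = \ell(\thv) \quad \PP_\star\text{-a.s.}
\end{equation*}
Denoting the per-sample integrand
\begin{equation*}
    \Psi_k(\theta,\phi,x)\eqdef\PE^\phi_x\lrb{\log\lr{\tfrac{1}{k}\sum_{\ell=1}^k \tfrac{p_\theta(x,\bz^\ell)}{q_\phi(\bz^\ell|x)}}},
\end{equation*}
two classical properties drive this step: (a) $k\mapsto\Psi_k(\theta,\phi,x)$ is nondecreasing (IWAE monotonicity of \cite{IWAE2015}); (b) $\Psi_k(\theta,\phi,x)\nearrow\log p_\theta(x)$ pointwise in $x$, which follows from the strong law $\tfrac{1}{k}\sum_\ell p_\theta(x,\bz^\ell)/q_\phi(\bz^\ell|x)\to p_\theta(x)$ together with the fact that the nonnegative gap $\log p_\theta(x)-\Psi_k(\theta,\phi,x)$ is dominated by the Kullback-Leibler quantity $\int q_\phi(z|x)\log(q_\phi(z|x)/p_\theta(z|x))\rmd z$, whose $\PP_\star$-integrability is guaranteed by \ref{hyp:q}. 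Taking $\PE_\star$ and invoking monotone convergence yields $\PE_\star[\Psi_k(\thv,\phi_0,\x)]\nearrow\ell(\thv)$. For arbitrary $\epsilon>0$, pick $k_0$ large enough that $\PE_\star[\Psi_{k_0}(\thv,\phi_0,\x)]>\ell(\thv)-\epsilon/2$, apply Kolmogorov's SLLN to the i.i.d.\ integrable sequence $\Psi_{k_0}(\thv,\phi_0,\x_i)$, and use the pointwise monotonicity in $k$ to propagate the lower bound to all $k\geq k_0$. The matching upper bound follows from the Jensen inequality above at $\theta=\thv$ together with the SLLN for $\log p_{\thv}(\x_i)$ (integrable by \ref{hyp:unif}(ii) and finiteness of $\ell(\thv)$).

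Combining the two ingredients with the optimality of $(\ttheta \nell,\tphi \nell)$ gives $L_n(\ttheta \nell)\geq\frac{1}{n}\lk \nell(\ttheta \nell,\tphi \nell)\geq\frac{1}{n}\lk \nell(\thv,\phi_0)\to\ell(\thv)$ $\PP_\star$-a.s. Under assumption \ref{hyp:unif} and compactness of $\Theta$, the classical Wald argument (see \cite{Ferguson1996}) shows that $\theta\mapsto\ell(\theta)$ is upper semicontinuous on $\Theta$ and that, for every open neighborhood $V$ of $\thv$,
\begin{equation*}
    \limsup_n\sup_{\theta\in\Theta\setminus V}L_n(\theta)\leq\sup_{\theta\in\Theta\setminus V}\ell(\theta) < \ell(\thv),
\end{equation*}
with the strict inequality coming from the uniqueness in \ref{hyp:theta:star}. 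Consequently $\ttheta \nell\in V$ eventually along any sequence with $n\wedge\lo\to\infty$, yielding $\ttheta \nell\to\thv$ $\PP_\star$-a.s.
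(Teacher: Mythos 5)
Your global strategy is the same as the paper's: the Jensen bound $n^{-1}\lk\nell(\xi)\leq n^{-1}\sum_{i=1}^n\log p_\theta(\x_i)$ (the paper's \eqref{eq:minor:elbo}), a proof that $n^{-1}\lk\nell(\thv,\phi_0)\to\PE_\star[\log p_\thv(\x)]$ $\PP_\star$-a.s.\ as $n\wedge\lo\to\infty$ (the paper's \Cref{lem:consistency:one}), and a Wald finite-cover argument under \ref{hyp:theta:star} and \ref{hyp:unif} to exclude the complement of any neighborhood of $\thv$; your final comparison of $L_n(\ttheta\nell)$ with $\sup_{\Theta\setminus V}L_n$ is logically equivalent to the paper's comparison of $\sup_{\bar\Wset\times\Phi}n^{-1}\lk\nell$ with $n^{-1}\lk\nell(\thv,\phi)$. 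The genuine difference lies in the second ingredient: you reduce the joint limit to a fixed level $k_0$ via the monotonicity in $k$ of $\Psi_k$ (\cite{IWAE2015}) and then apply the SLLN in $n$ at that fixed $k_0$, whereas the paper does not invoke monotonicity but uses a blocking device (Euclidean division $\lo=m\lfloor \lo/m\rfloor+r$ together with concavity of the logarithm) to compare $D\nell$ with block averages of fixed size $m$, applies the SLLN in $n$, and only then lets $m\to\infty$ through \Cref{lem:consist:log}. Your route is valid and somewhat more economical, since the monotonicity holds conditionally on $\x$ and handles every $k\geq k_0$ at once, where the paper has to deal with the remainder term of the division.

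The one step you assert rather than prove is your claim (b), the pointwise limit $\Psi_k(\thv,\phi_0,\x)\nearrow\log p_\thv(\x)$, which is what guarantees that a suitable $k_0$ exists. The two facts you cite --- the conditional SLLN for $\bar S_k=k^{-1}\sum_{\ell=1}^k w_\ell$ with $w_\ell=p_\thv(\x,\bz^\ell)/q_{\phi_0}(\bz^\ell|\x)$, and domination of the gap $\log p_\thv(\x)-\Psi_k$ by the $k=1$ Kullback--Leibler gap --- do not by themselves deliver it: almost sure convergence of $\log\bar S_k$ must be upgraded to convergence of the \emph{conditional expectations}, and a bounded, monotone gap certainly has a limit, but nothing in your argument forces that limit to be zero. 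The missing piece is a uniform-integrability/generalized dominated convergence step, which is exactly the content of the paper's \Cref{lem:consist:log}: sandwich $k^{-1}\sum_{\ell}\log\lr{w_\ell/p_\thv(\x)}\leq\log\lr{\bar S_k/p_\thv(\x)}\leq \bar S_k/p_\thv(\x)-1$, observe that both bounds converge $\PE^\phi_\X$-a.s.\ and in $L^1$ (SLLN and Scheff\'e), and conclude by the generalized dominated convergence theorem; this uses $\PE^\phi_\X\lrb{|\log(w/p_\thv(\x))|}<\infty$ for $\PP_\star$-a.e.\ $\x$, which does follow from \ref{hyp:q} (negative part) and from $\log^+u\leq u$ with $\PE^\phi_\X[w]=p_\thv(\x)$ (positive part). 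So your claim is true and repairable with the hypotheses at hand --- either conditionally on $\x$ as above, or directly under $\PE_\star\PE^\phi_\X$, which is how the paper applies \Cref{lem:consist:log} --- but as written the crucial interchange of $k\to\infty$ with $\PE^\phi_\X$ is missing; the KL domination you invoke is the right tool only for the subsequent interchange with $\PE_\star$ (your monotone-convergence step giving $\PE_\star[\Psi_k]\to\PE_\star[\log p_\thv(\x)]$), not for the conditional one.
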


We would like to emphasize the strength of our result. In addition to standard conditions for consistency of the maximum likelihood such as Wald's formulation \citep{Ferguson1996}, our approach only requires Assumption \ref{hyp:q}, which is mild and ensures the existence of the objective and related expectations. This stands in contrast to existing consistency results for MSLE estimators. For instance, in the overlapping case, \cite{sung2007monte} utilize intricate technical arguments concerning the weak convergence of stochastic processes, which entail demanding uniform strong laws of large numbers. Similarly, for independent draws, the proof of \cite{lee1992efficiency} involves high-order derivatives of the log-simulated likelihood function and simulation errors, requiring strong differentiability and moments conditions on the log-simulated likelihood and on the simulator. Furthermore, the proof of \cite{lee1992efficiency} also depends on the assumption that the likelihood function is strictly bounded away from zero, which is a very strong requirement.

\subsection{Consistency of the variational parameter}

Our investigation also includes studying the asymptotic limit of the IWVI variational parameter $\tphi \nell$.

We expect the limit of $\tphi \nell$ to correspond to the minimizer of the dominant first-order term involving the variational parameter in the development of \eqref{devIWELBO} in powers of $1/\lo$, which takes the form of a variance. We demonstrate below that under relevant conditions, $\tphi \nell$ is indeed strongly consistent and converges to such a variance minimizer that we define properly. Let us first introduce the ratio of the importance weight $p_\theta(\bz,\x)/q_\phi(\bz|\x)$ and of the marginal density $p_\theta(\x)$, as well as its variance
\begin{equation}
  \label{eq:defVar}
  r_\xi^\x(\bz):=\frac{p_\theta(\bz,\x)}{q_\phi(\bz|\x)p_\theta(\x)}, \quad \Vratio_\xi(\x)\eqdef\PE^\phi_\X \lrb{
    \lr{r_\xi^\x(\bz)-1}^2}  \quad \mbox{and} \quad \Vratio(\xi) \eqdef \PE_\star\lrb{\Vratio_\xi(\x)} .
\end{equation}

We then make the following assumptions:

\begin{hyp}{A}
    \item \label{hyp:moment} 
 $           \PE_\star \lrb{\sup_{\xi \in\Xi}
          \lr{\normsx{r_\xi^\x(\bz)}{\phi}{4}^3
          \lr{\normsx{\log(r_\xi^\x(\bz))}{\phi}{4}+ 1} }} <\infty .
 $
\end{hyp}

\begin{hyp}{A}
\item \label{hyp:variational}
  \begin{enumerate}[(i)]
  \item \label{hyp:consist:general:minimizer} There exists $\phiv \in\Phi$ such that
    $$
    \{\phiv\}=\argmin_{\phi\in\Phi}   \Vratio (\thv,\phi) .
    $$
  \item \label{hyp:consist:general:continuity} $\PP_\star-\as$, the function $\xi \mapsto \Vratio_\xi(\x)$ is continuous, where $\Vratio_\xi$ is defined in \eqref{eq:defVar}.  
  \end{enumerate}
\end{hyp}

Assumption \ref{hyp:moment} requires the existence of finite moments for the importance ratios, which is a standard assumption in the IWVI literature. Of course, a second-order moment is necessary to ensure the existence of the variance of the importance weight (the first-order term in Expansion \eqref{devIWELBO}), and higher-order moments have been required to derive such developments (a finite $6$-th order central moment is for instance required in \cite{maddison2017filtering}, while \cite{nowozin2018debiasing} needs finite moments of all orders). Assumption \ref{hyp:variational} respectively ensures the uniqueness of the first-order variance minimizer with respect to $\phi$ when evaluated at the true $\theta^*$ \ref{hyp:consist:general:minimizer} and the continuity of the variance \ref{hyp:consist:general:continuity}. These conditions may be non-standard for importance-weighted variational inference, as they are not satisfied for complex non-smooth deep generative models, but are important for achieving consistency, as for the regular model parameter.

With these assumptions in place, we can now establish the strong consistency of $\tphi \nell$ to the variance minimizer.

\begin{theorem}
\label{thm:consistency-variational}
  Assume
  \ref{hyp:theta:star}-\ref{hyp:q}-\ref{hyp:unif}-\ref{hyp:moment}-\ref{hyp:variational}. Then,
  $\PP_\star-\as$,
  $$
    \lim_{n\wedge \lo \to \infty} \t \phi \nell = \phiv.
  $$
\end{theorem}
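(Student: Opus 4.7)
The natural strategy is to reduce the claim to an argmin-consistency argument on a rescaled criterion capturing the $1/\lo$-order term of \eqref{devIWELBO}. Since $\sum_{i=1}^n \log p_\theta(\x_i)$ is independent of $\phi$ and $(\ttheta\nell, \tphi\nell)$ jointly maximizes $\lk\nell$, the variational parameter $\tphi\nell$ is also the minimizer over $\phi\in\Phi$ of
\[
\mathcal{D}_n^\lo(\ttheta\nell, \phi) \eqdef \sum_{i=1}^n \lrb{ \log p_{\ttheta\nell}(\x_i) - \PE^\phi_\X\lrb{\log \hat p_{\ttheta\nell}^{(\lo)}(\x_i)} }.
\]
The plan is to establish that $\PP_\star$-a.s.,
\[
\sup_{\xi \in \Xi} \lrav{ \tfrac{2\lo}{n} \mathcal{D}_n^\lo(\xi) - \Vratio(\xi) } \xrightarrow[n\wedge\lo\to\infty]{} 0,
\]
and then to combine this with Theorem \ref{thm:consistency} ($\ttheta\nell \to \thv$), the continuity from \ref{hyp:variational}(ii), the uniqueness from \ref{hyp:variational}(i), and the compactness of $\Phi$ via a standard argmin-consistency argument.

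For the pointwise step, fix $\xi$ and $\x$ and set $W_\lo \eqdef \hat p_\theta^{(\lo)}(\x)/p_\theta(\x) = \lo^{-1}\sum_{\ell=1}^\lo r_\xi^\x(\bz^\ell)$, so that $\PE^\phi_\X[W_\lo]=1$ and $\PE^\phi_\X[(W_\lo-1)^2] = \Vratio_\xi(\x)/\lo$. A second-order Taylor expansion of $\log(1+u)$ applied on the event $\{|W_\lo - 1|\leq 1/2\}$ contributes $-\Vratio_\xi(\x)/(2\lo) + O(\PE^\phi_\X[|W_\lo-1|^3])$ in expectation; a Rosenthal / Marcinkiewicz-Zygmund inequality gives $\PE^\phi_\X[|W_\lo-1|^3] = O(\lo^{-3/2})$ under the fourth-moment control of \ref{hyp:moment}. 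On the complement, which has probability $O(\Vratio_\xi(\x)/\lo)$ by Chebyshev, the contribution of $|\log W_\lo|$ is handled by Cauchy-Schwarz and an $L^2$ control of $\log W_\lo$ coming from the fourth moments of $\log r_\xi^\x$ in \ref{hyp:moment}. This yields pointwise $h_\lo(\xi,\x) \eqdef 2\lo \, \PE^\phi_\X[-\log W_\lo] \to \Vratio_\xi(\x)$ together with an integrable (in $\x$) envelope dominating $h_\lo$ uniformly in $\lo$ and $\xi$; dominated convergence then gives $\PE_\star[h_\lo(\xi,\x)] \to \Vratio(\xi)$.

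For the uniform-in-$\xi$ upgrade, I would use the triangle bound
\[
\sup_\xi \lrav{\tfrac{2\lo}{n}\mathcal{D}_n^\lo(\xi) - \Vratio(\xi)} \leq \tfrac{1}{n}\sum_{i=1}^n \sup_\xi \lrav{h_\lo(\xi,\x_i) - \Vratio_\xi(\x_i)} + \sup_\xi \lrav{\tfrac{1}{n}\sum_{i=1}^n \Vratio_\xi(\x_i) - \Vratio(\xi)}.
\]
The second term vanishes $\PP_\star$-a.s.\ by a standard uniform strong law of large numbers on the compact set $\Xi$, using continuity of $\xi\mapsto \Vratio_\xi(\x)$ from \ref{hyp:variational}(ii) and the integrable envelope $\sup_\xi \Vratio_\xi(\x)$ provided by \ref{hyp:moment}; the first vanishes $\PP_\star$-a.s.\ by dominated convergence plus the SLLN, via the pointwise convergence and uniform envelope established above. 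The main technical obstacle is precisely the construction of this uniform envelope: since $\log$ is neither bounded nor globally Lipschitz on $\rsetpos$, one must carefully balance the decay of $\PP^\phi_\X(|W_\lo-1|>1/2)$ against the possible blow-up of $|\log W_\lo|$ on that event, which is exactly why \ref{hyp:moment} couples fourth-order moments of both $r_\xi^\x$ and $\log r_\xi^\x$ in the specific product form stated.
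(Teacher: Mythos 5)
Your overall strategy is sound and in fact close in spirit to the paper's: the reduction of $\tphi\nell$ to a minimizer of the rescaled gap $\phi\mapsto 2\lo D\nell(\ttheta\nell,\phi)$, the identification of the limit criterion $\Vratio(\xi)$, and the final argmin-consistency step (Theorem \ref{thm:consistency}, compactness of $\Phi$, \ref{hyp:variational}) are all correct. The paper does not establish your full uniform convergence $\sup_{\xi\in\Xi}|2\lo D\nell(\xi)-\Vratio(\xi)|\to 0$; it only proves the one-sided statements $\liminf\inf_{\Cset}\lo D\nell\geq\PE_\star[\inf_\Cset\Vratio_\xi(\x)]$ and $\limsup\sup_\Cset\lo D\nell\leq\PE_\star[\sup_\Cset\Vratio_\xi(\x)]$ (\Cref{lem:ell-D-lim}) and then runs a Wald-type covering argument directly, which avoids needing a ULLN for $n^{-1}\sum_i\Vratio_\xi(\x_i)$. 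Your stronger intermediate claim is attainable under \ref{hyp:moment}--\ref{hyp:variational}, so the difference there is packaging; the genuine problems are in how you propose to prove it.

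First, the central pointwise estimate as you sketch it does not close: the signal is $\Vratio_\xi(\x)/(2\lo)$, so every remainder in $\PE^\phi_\X[\log W_\lo]$ must be $o(\lo^{-1})$, but with the second-moment Chebyshev bound $\PP^\phi_\X(|W_\lo-1|>1/2)=O(\lo^{-1})$ and Cauchy--Schwarz against an $\Ltwo$ control of $\log W_\lo$ (itself at best $O(\lo^{-1/2})$), the bad-event contribution is only $O(\lo^{-1})$ --- the same order as the signal; likewise the truncated first-order term $\PE^\phi_\X\lrb{(W_\lo-1)\indiacc{|W_\lo-1|\leq 1/2}}=-\PE^\phi_\X\lrb{(W_\lo-1)\indiacc{|W_\lo-1|>1/2}}$, which your sketch does not address, is also only $O(\lo^{-1})$ under these bounds. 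The repair uses the fourth moments that \ref{hyp:moment} actually provides: Marcinkiewicz--Zygmund at order $4$ gives $\normsx{W_\lo-1}{\phi}{4}=O(\lo^{-1/2})$, hence $\PP^\phi_\X(|W_\lo-1|>1/2)=O(\lo^{-2})$ and all remainders become $O(\lo^{-3/2})$. The paper avoids event-splitting altogether via the deterministic inequality $|\log r-(r-1)+(r-1)^2/2|\leq (r-1)^2|\log r|$ (\Cref{lem:log}), Cauchy--Schwarz, \Cref{lem:marcinZygm} and \Cref{lem:borne:log:U}, which yield the explicit rate
\begin{equation*}
\Bigl|\lo\,\PE^\phi_\X\lrb{\log W_\lo}+\Vratio_\xi(\x)/2\Bigr|\;\leq\; M\,\lo^{-1/2}\,\normsx{r_\xi^\x(\bz)}{\phi}{4}^{3}\lr{\normsx{\log r_\xi^\x(\bz)}{\phi}{4}+1},
\end{equation*}
uniformly in $\xi$. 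Second, your term $n^{-1}\sum_i\sup_\xi|h_\lo(\xi,\x_i)-\Vratio_\xi(\x_i)|$ is a triangular array in the joint limit $n\wedge\lo\to\infty$: ``dominated convergence plus the SLLN'' does not apply as stated, both because your pointwise convergence is per fixed $\xi$ (not uniform in $\xi$ for fixed $\x$) and because the summands change with $\lo$. What is needed is exactly a rate-times-envelope bound as in the display above: with $G(\x)=\sup_\xi\normsx{r_\xi^\x(\bz)}{\phi}{4}^{3}(\normsx{\log r_\xi^\x(\bz)}{\phi}{4}+1)$ integrable by \ref{hyp:moment}, the SLLN applied to $G$ together with the factor $\lo^{-1/2}$ makes this term vanish almost surely; this is precisely the role of \ref{hyp:moment} and of \Cref{lem:ell-D-lim} in the paper. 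With these two repairs your route goes through.
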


To the best of our knowledge, this is the first result that establishes the consistency of the variational parameter to a variance minimizer in the IWVI framework. The proof can be found in \Cref{proof:thm:variational}.

\section{Asymptotic normality}

In this section, we show that IWVI is also asymptotically normal and even efficient under additional assumptions when $\lo$ grows fast enough to infinity (relative to $n$).

\subsection{Reparameterization}

We work in this section under an additional mild assumption that is commonly used in practice to compute reliable stochastic estimates of the lower bound and to make variational inference suitable for backpropagation in deep generative models. Let us introduce it formally.

\begin{hyp}{A}
\item \label{hyp:reparam} There is a measurable space $(\Eset, \Esigma)$ with $\Eset \subset \rset^{d_\epsilon}$, a measure on $(\Eset, \Esigma)$ of density $\nu$ w.r.t. the Lebesgue measure on $\rset^{d_\epsilon}$, and a measurable function $g_\phi: \Xset \times \Eset \to \rset^+$  such that for any $\x \in \Xset$, if $\epsilon  \sim \nu$ and $\bz=g_\phi(\x,\epsilon)$, then $\bz\sim q_\phi(\cdot|\x)$. 
\end{hyp}

Assumption \ref{hyp:reparam} means in particular that for any non-negative measurable function $h$ on $(\Zset,\Zsigma)$,
$$
\int_\Zset h(\bz) q_\phi(\bz|\x) \rmd z=\int_\Eset h(g_\phi(\x,\epsilon)) \nu(\epsilon) \rmd \epsilon\eqsp. 
$$
This identity implies the change of measure formula: $q_\phi(g_\phi(\x,\epsilon)|\x) |\mathrm{Det} \lr{\partial_\epsilon   g_\phi(\x,\epsilon)}|=\nu(\epsilon)$. This alternative way to obtain a sample $\bz\sim q_\phi(\cdot|\x)$ by using a random variable $\epsilon$ with a fixed distribution $\nu$ is the basis of the reparametrization trick, widely used in the variational inference literature \citep{kingma2013auto,IWAE2015}. 

In what follows, we express the random variables $\seq{\bz_i=(\bz_i^1,\bz_i^2,\ldots)}{i\in \nset}$ in terms of the random variables  $\seq{\beps_i=(\epsilon_i^1,\epsilon_i^2,\ldots)}{i\in \nset}$ using $\bz_i^\lo=g_\phi(\x_i,\epsilon_i^\lo)$ where $\seq{\epsilon_i^\lo}{i,\lo \in\nset}$ are i.i.d.\ with distribution $\nu$. This allows us to consider an expectation $\PE^\phi_\X$ conditional on $\X$ that does not actually depend on $\phi$ (since the distribution $\nu$ does not depend on the parameter $\phi$, unlike $q_\phi$). As a consequence, we replace $\PE^\phi_\X$ by $\PE_\X$ whenever we consider the expectation conditional on $\X$ of functions of $\seq{\epsilon_i^\lo}{i,\lo \in\nset}$ and $\X$ only.

Using this notation, we can now get another expression for $\lk \nell$: 
\begin{align}
    \lk \nell(\xi)&= \sum_{i=1}^n \log p_\theta(\x_i)+ \sum_{i=1}^n \PE_\X \lrb{\log \lr{\frac 1 \lo \sum_{\ell=1}^\lo \frac{p_\theta(g_\phi(\x_i,\epsilon_\ell)|\x_i)}{q_\phi(g_\phi(\x_i,\epsilon_\ell)|\x_i)}}}\notag\\
& \stackrel{(a)}{=} \sum_{i=1}^n \log p_\theta(\x_i)+ \sum_{i=1}^n \PE_\X \lrb{\log \lr{\frac 1 \lo \sum_{\ell=1}^\lo \frac{f_\xi(\epsilon_\ell|\x_i)}{\nu(\epsilon_\ell)}}},  \label{eq:def:L} 
\end{align}
where 
$$ f_\xi(\epsilon_\ell|\x_i)=p_\theta(g_\phi(\x_i,\epsilon_\ell)|\x_i) |\mathrm{Det} \lr{\partial_\epsilon g_\phi(\x_i,\epsilon_\ell)}| . 
$$
In $\stackrel{(a)}{=}$, we have used the change of variable formula:
$q_\phi(g_\phi(\x,\epsilon)|\x) |\mathrm{Det} \lr{\partial_\epsilon
g_\phi(\x,\epsilon)}|=\nu(\epsilon)$.  

\subsection{Efficiency of $\t \theta \nell$}

We now let $\lo$ depend explicitly on $n$ and therefore write $\lo_n$ instead
of $\lo$. This allows us to control the rate at which $k$ increases relatively to $n$. The following standard assumptions will be needed:

\begin{hyp}{A}
    \item \label{hyp:consist}
      There exists $\xiv= (\thv,\phi^\star)\in\mathring{\Xi}$ such that $\t \xi \nell \convprob{\PP_\star} \xiv$, as $n \wedge k \to \infty$.
    \end{hyp}
    \begin{hyp}{A}
    \item \label{hyp:as:zero}
      \begin{enumerate}[(i)]
      \item $\theta \mapsto p_\theta(\x)$ is twice differentiable on
        $\Theta$.
       \item \label{item:moment:unif} $\PE_\star \lrb{\sup\limits_{\theta \in \Theta} \norm{\nabla^2_\theta \log p_\theta(\x)}}  < \infty $. 
      \end{enumerate}
      \end{hyp}
       
    Define for positive real numbers $\alpha, \delta$ such that $\alpha^{-1}+\delta^{-1}=1$,  
   \begin{align*}
       M_{0,\xi}(\x)&= \lrb{1+\lr{\normsxsimp{\frac{f_\xi(\epsilon|\x)}{\nu(\epsilon)}}{2\delta}}^2} \normsxsimp{\nabla_\xi \log f_\xi(\epsilon|\x)}{\alpha} ,&  N_{0,\xi}(\x) &=\lr{1+\normsxsimp{\frac{f_\xi(\epsilon|\x)}{\nu(\epsilon)}}{2\delta}} \normsxsimp{ \frac{\nabla_\xi f_\xi(\epsilon|\x)}{\nu(\epsilon)} }{2},\\
   M_{1,\xi}(\x)&=\lrb{1+\lr{\normsxsimp{\frac{f_\xi(\epsilon|\x)}{\nu(\epsilon)}}{2\delta}}^2} \normsxsimp{\frac{\nabla_\xi^2 f_\xi(\epsilon|\x)}{f_\xi(\epsilon|\x)}}{\alpha} ,& N_{1,\xi}(\x) &=\lr{1+\normsxsimp{\frac{f_\xi(\epsilon|\x)}{\nu(\epsilon)}}{2\delta}} \normsxsimp{ \frac{\nabla_\xi^2 f_\xi(\epsilon|\x)}{\nu(\epsilon)} }{2},\\
   M_{2,\xi}(\x)&=\normsxsimp{\frac{f_\xi(\epsilon|\x)}{\nu(\epsilon)} -1}{\frac{\alpha}{\alpha-2} \vee 2}  \times \normsxsimp{\nabla_\xi \log f_\xi(\epsilon|\x)}{\alpha}
   ,&N_{2,\xi}(\x)&=\normsxsimp{\frac{\nabla_\xi f_\xi(\epsilon|\x)}{\nu(\epsilon)}}{2}.
   \end{align*}
   Obviouly, $M_{i,\xi}(\x)$ and $N_{i,\xi}(\x)$ for $i \in \{0,1,2\}$ depend on $\alpha$ but for simplicity, we make implicit this dependence and do not stress it in the notation.  
   \begin{hyp}{A}
   \item \label{hyp:weak:one}  For any $(\epsilon,\x) \in \Eset \times \Xset$, the function $\xi \mapsto f_\xi(\epsilon|\x)$ is twice differentiable and there exists $\alpha>2$ such that, setting
     $\delta=\alpha/(\alpha-1) \in (1,2)$, 
     \begin{enumerate}[(i)]
     \vspace{0.2cm}
       \item $\lim\limits_{n \to \infty}\lo_n/n^{\delta/2}=\infty$,
     \vspace{0.2cm}
       \item    $\PE_\star[M_{0,\xi^\star}(\x)]+\PE_\star[N_{0,\xi^\star}
    (\x)] <\infty$,
     \vspace{0.2cm}
      \item 
      $ \PE_\star\lrb{\sup\limits_{\xi \in \Xi} M_{1,\xi}(\x)}+\PE_\star\lrb{\sup\limits_{\xi \in \Xi} N_{1,\xi}
   (\x)}<\infty$,
     \vspace{0.2cm}
     \item      
     $ \PE_\star\lrb{\sup\limits_{\xi \in \Xi} M^2_{2,\xi}(\x)}+\PE_\star\lrb{\sup\limits_{\xi \in \Xi} N^2_{2,\xi}(\x)}<\infty $.
     \end{enumerate}
     
    \end{hyp}

\noindent We now state our theorem, which proof can be found in \Cref{proof:thm:efficiency}.

\begin{theorem}
    \label{thm:AS}
    Assume \ref{hyp:reparam}-\ref{hyp:consist}-\ref{hyp:as:zero}-\ref{hyp:weak:one}. Then, provided that the matrix 
    $$
    J_{1}\eqdef\PE_\star \lrb{\nabla_\theta \log p_\thv(\x)\lr{\nabla_\theta \log p_\thv(\x)}^T}
    $$  
is non-singular, we have 
    $$ 
        n^{1/2}(\t \theta^{k_n}_n - \thv) \weakconv_{\PP_\star} \mathcal{N}(0,J_{2}^{-1}J_{1}J_{2}^{-1}),
    $$
    where $J_{2}\eqdef\PE_\star \lrb{\nabla_\theta^2 \log p_\thv(\x)}$.
\end{theorem}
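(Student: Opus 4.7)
The plan is to apply the classical M-estimator strategy to $\lk_n^{k_n}$, exploiting the additive decomposition from \eqref{eq:def:L}:
\begin{equation*}
  \lk_n^{k_n}(\xi) = \sum_{i=1}^n \log p_\theta(\x_i) + R_n^{k_n}(\xi),\qquad R_n^{k_n}(\xi) := \sum_{i=1}^n \PE_\X\lrb{\log\lr{\frac{1}{k_n}\sum_{\ell=1}^{k_n}\frac{f_\xi(\epsilon_\ell|\x_i)}{\nu(\epsilon_\ell)}}}.
\end{equation*}
Since $\xiv\in\mathring\Xi$ by \ref{hyp:consist}, the maximizer $\t\xi_n^{k_n}$ lies in the interior of $\Xi$ with $\PP_\star$-probability tending to $1$, so the first-order condition $\nabla_\theta \lk_n^{k_n}(\t\xi_n^{k_n})=0$ holds eventually. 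A mean-value expansion in $\xi$ around $\xiv$ then yields
\begin{equation*}
  0 = \nabla_\theta \lk_n^{k_n}(\xiv) + H_{\theta\theta}^{(n)}(\bar\xi_n)\lr{\t\theta_n^{k_n}-\thv} + H_{\theta\phi}^{(n)}(\bar\xi_n)\lr{\t\phi_n^{k_n}-\phiv}
\end{equation*}
for some $\bar\xi_n$ on the segment $[\xiv,\t\xi_n^{k_n}]$. The goal is then to establish $n^{-1/2}\nabla_\theta\lk_n^{k_n}(\xiv)\weakconv_{\PP_\star}\mathcal N(0,J_1)$, $n^{-1}H_{\theta\theta}^{(n)}(\bar\xi_n)\convprob{\PP_\star}J_2$, and $n^{-1/2}H_{\theta\phi}^{(n)}(\bar\xi_n)\lr{\t\phi_n^{k_n}-\phiv}=o_{\PP_\star}(1)$, after which Slutsky's lemma together with the invertibility of $J_2$ produces the announced sandwich limit.

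The exact-likelihood block $\sum_i\log p_\theta(\x_i)$ is classical: its score at $\thv$ is an i.i.d.\ centered sum (centered because $\thv$ maximizes $\PE_\star[\log p_\theta(\x)]$ by \ref{hyp:theta:star}) with finite second moment inherited from \ref{hyp:weak:one}(ii) (since $M_{0,\xiv}$ dominates $\nabla_\theta\log p_\thv$ in $L^\alpha$ with $\alpha>2$), so the CLT delivers the $\mathcal N(0,J_1)$ limit; and the uniform law of large numbers in \ref{hyp:as:zero}(ii) together with consistency of $\bar\theta_n$ ensures $n^{-1}\sum_i\nabla_\theta^2\log p_{\bar\theta_n}(\x_i)\to J_2$. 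Crucially, $\log p_\theta(\x_i)$ has no $\phi$-dependence, so the cross-block $H_{\theta\phi}^{(n)}$ and every correction beyond the standard MLE sandwich come exclusively from $R_n^{k_n}$. The whole analysis therefore reduces to establishing three negligibility statements:
\begin{equation*}
  n^{-1/2}\nabla_\theta R_n^{k_n}(\xiv) = o_{\PP_\star}(1),\quad n^{-1}\nabla_\theta^2 R_n^{k_n}(\bar\xi_n) = o_{\PP_\star}(1),\quad n^{-1}\nabla_\phi\nabla_\theta R_n^{k_n}(\bar\xi_n) = o_{\PP_\star}(1).
\end{equation*}

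The hard part, and the essential obstacle, is the first of these bounds, which quantifies the rate at which the derivative of the IWVI simulation bias vanishes. Differentiating under the expectation, each summand of $\nabla_\theta R_n^{k_n}(\xiv)$ is a ratio $\PE_\X\lrb{(k_n^{-1}\sum_\ell\nabla_\theta W_\ell)/(k_n^{-1}\sum_\ell W_\ell)}$ with $W_\ell=f_{\xiv}(\epsilon_\ell|\x_i)/\nu(\epsilon_\ell)$ satisfying $\PE_\X[W_\ell]=1$ and $\PE_\X[\nabla_\theta W_\ell]=0$. Expanding the denominator around its law-of-large-numbers value $1$ and invoking Hölder's inequality with the conjugate pair $(\alpha,\delta)$ of \ref{hyp:weak:one}, each summand can be bounded pointwise in $\x_i$ by a constant multiple of $\lr{M_{0,\xiv}(\x_i)+N_{0,\xiv}(\x_i)} k_n^{-\delta/2}$, where the exponent $\delta/2$ reflects the available $L^\delta$-concentration rate of the centered IS average under only $\alpha$ moments; integrability in $\x_i$ is then exactly \ref{hyp:weak:one}(ii). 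Summing over $i$ and rescaling by $n^{-1/2}$ produces an overall bound of order $O_{\PP_\star}(n^{1/2}k_n^{-\delta/2})$, which is $o_{\PP_\star}(1)$ precisely under the rate assumption $k_n/n^{\delta/2}\to\infty$ in \ref{hyp:weak:one}(i). The Hessian and cross-derivative estimates proceed analogously but differentiate the ratio once more and control each term uniformly in $\xi\in\Xi$ using $M_{1,\xi}, N_{1,\xi}, M_{2,\xi}, N_{2,\xi}$ from \ref{hyp:weak:one}(iii)--(iv); combined with consistency of $\bar\xi_n$ and $\t\phi_n^{k_n}-\phiv=o_{\PP_\star}(1)$ from Theorem \ref{thm:consistency-variational} for the cross block, one gets that all three quantities above vanish in probability, so Slutsky closes the argument.
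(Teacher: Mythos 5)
Your architecture is the one the paper uses: split $\lk\nelln$ into the exact log-likelihood plus the variational-gap remainder, invoke the first-order condition and a Taylor expansion around $\xiv$, obtain $\mathcal N(0,J_1)$ from the exact score and $J_2$ from the exact Hessian, and reduce the rest to showing that the first and second derivatives of the remainder are negligible, with the score of the simulation bias controlled through a self-normalized ratio $\PE_\X[(\sum_\ell \nabla W_\ell)/(\sum_\ell W_\ell)]$. The paper packages this as \Cref{prop:weakconv} and \Cref{prop:second:derivative} and works with the full $\xi$-gradient rather than block by block, but that difference is cosmetic.

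The crux of your argument, however, is miscalibrated. You bound each summand of $\nabla_\theta R_n^{\lo_n}(\xiv)$ by a multiple of $\lr{M_{0,\xiv}(\x_i)+N_{0,\xiv}(\x_i)}\lo_n^{-\delta/2}$ and assert that $n^{1/2}\lo_n^{-\delta/2}\to 0$ ``precisely under'' $\lo_n/n^{\delta/2}\to\infty$. Neither half of this is correct: $\lo_n\gg n^{\delta/2}$ only gives $n^{1/2}\lo_n^{-\delta/2}\ll n^{1/2-\delta^2/4}$, which \emph{diverges} whenever $\delta<\sqrt2$ (e.g.\ $\alpha=4$, $\delta=4/3$), so your bound does not close the argument on the assumed range $\delta\in(1,2)$. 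The rate that is actually achievable, and that the growth condition is calibrated to, is $\lo_n^{-1/\delta}=\lo_n^{-1+1/\alpha}$: the paper gets it from \Cref{lem:borne:autonorm} by writing $\PE[W_k]=\PE[(1-\bar U_k)W_k]$, applying H\"older with the conjugate pair $(\alpha,\delta)$, Marcinkiewicz--Zygmund at order $\alpha\geq2$ to obtain $\|1-\bar U_k\|_\alpha=O(k^{-1/2})$, and the crude bound $\|W_k\|_\delta\leq Mk^{-1/2+1/\alpha}(\cdots)$; one then checks $n^{1/2}\lo_n^{-1/\delta}\to0$ if and only if $\lo_n/n^{\delta/2}\to\infty$. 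Your heuristic that $\delta/2$ is ``the $L^\delta$-concentration rate'' is where this goes wrong, and without the correct exponent the central negligibility claim does not follow. A secondary gap: for the cross block you need $n^{-1/2}H^{(n)}_{\theta\phi}(\bar\xi_n)(\t\phi\nelln-\phiv)=o_{\PP_\star}(1)$, but you only establish $n^{-1}H^{(n)}_{\theta\phi}(\bar\xi_n)=o_{\PP_\star}(1)$ and $\t\phi\nelln-\phiv=o_{\PP_\star}(1)$, and $n^{1/2}\cdot o_{\PP_\star}(1)\cdot o_{\PP_\star}(1)$ is not $o_{\PP_\star}(1)$; what actually saves this term is the \emph{uniform rate} $\sup_{\xi\in\Xi}\|n^{-1}\nabla^2_\xi R_n^{\lo_n}(\xi)\|=O_{\PP_\star}(\lo_n^{-1/\delta}+\lo_n^{-1+2/\alpha})$ from the proof of \Cref{prop:second:derivative} combined with the boundedness of $\Phi$, not consistency of $\t\phi\nelln$.
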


We demonstrate that asymptotic normality and efficiency of our estimator are achieved provided $k$ is sufficiently large. Notably, while $k$ must diverge at a rate faster than $\sqrt{n}$ for the independent MSLE and at least $n$ for the overlapping MSLE, the required growth of $k$ for the IWVI estimator lies between these regimes. Specifically, the MSLE with independent draws and the MSLE with overlapping draws are two variants of the same estimator, differing only in the method of sampling latent variables. In contrast, the IWVI estimator of interest is nearly identical to the MSLE but treats latent variables in expectation rather than through sampling. Consequently, it is natural for the IWVI estimator to require a threshold for $k$ that lies between $\sqrt{n}$ and $n$, reflecting a phase transition dependent on the moments of the importance weights. 

While Assumptions \ref{hyp:consist} and \ref{hyp:as:zero} align with standard requirements for the asymptotic normality of the regular MLE, the exact threshold for $k$ is dictated by Assumption \ref{hyp:weak:one}, which is common in the IWVI literature and mainly ensures the existence of finite moments for the importance weights, their logarithms and their first- and second-order derivatives. Remarkably, Assumption \ref{hyp:weak:one} even relaxes the more demanding conditions employed in \cite{lee1992efficiency,lee1995asymptotic,sung2007monte}, which rely on uniformity arguments. This refined analysis reveals the phase transition between the $\sqrt{n}$ and $n$ thresholds, allowing us to identify these intermediate thresholds for $k$.

\section{Simulation study}

In this section, we provide a numerical study of $\tilde{\theta}_n^k$ in a simple but intractable unobserved heterogeneity model, which is widely used in econometrics for its ability to account for individual differences not captured by observed variables and that can distort the estimated effect of covariates on outcomes like survival times or hazard rates. While the assumptions underpinning the asymptotic theories of IWVI and MSLE may not strictly hold in this specific setup, the numerical experiments serve to complement our theoretical analysis. In particular, they allow us to explore the practical performance of $\tilde{\theta}_n^k$ and to compare it to the classical MSLE estimators with both independent and overlapping draws, highlighting similarities and differences in behavior across these approaches.

\vspace{0.2cm}

The model considers independent Gaussian random variables $\x_i |\bz_i \sim \mathcal{N}(\theta+\bz_i, 1)$, where $\bz_i$ represents unobserved heterogeneity that is assumed to have a known skewed Gumbel distribution of density $q(z) = e^{-z} \exp(-e^{-z})$. Maximum likelihood estimation is not possible as the marginal density has no closed-form solution, and we instead use IWVI using the direct simulator $q(z)$, so that $\tilde{\theta}_n^k$ maximizes

$$
\tilde{\theta}_n^k = \arg\max_\theta \sum_{i=1}^n \PE_\X\lrb{\log\lr{\frac 1 k \sum_{\ell=1}^k \frac{1}{\sqrt{2\pi}} \exp \left( -\frac{(\x_i - \theta - \bz_i^\ell)^2}{2} \right)} }, 
$$
where $\PE_\X$ simply stands for the expectation of the independent latent random variables $\seq{\bz_i=(\bz_i^1,\ldots,\bz_i^\lo)}{1\leq i\leq n}$ following the Gumbel distribution.

\vspace{0.2cm}

The main objectives of this numerical section are:
\begin{enumerate}
\item to illustrate how $\tilde{\theta}_n^k$ gets closer and closer to the MLE as the number of draws $k$ increases.
\item to evaluate the accuracy of $\tilde{\theta}_n^k$ and identify the source of error (bias/variance) in different regimes as a function of $k$ and $n$.
\item to provide a comparison of the IWVI estimator $\tilde{\theta}_n^k$ with the MSLE for both independent and overlapping draws.
\end{enumerate}

\subsection{Cameron-Trivedi's dataset} The first dataset we use was provided by \cite{cameron2005microeconometrics} in their book \textit{Microeconometrics: Methods and Applications} and is available on the book's website. It was generated based on the model described above with $\theta^*=1$ and $n=100$. The log-likelihood for this model can be approximated precisely through numerical integration.

\vspace{0.2cm}

We first investigate the MSLE estimator. Figure \ref{fig1:mainfig} presents the distribution of the estimated MSLE across multiple replications, using independent draws in Figure \ref{fig1:subfig1} and overlapping draws in Figure \ref{fig1:subfig2}. For a given value of $k$, the distribution of the MSLE is displayed using a boxplot resulting from 500 replications of the experiments, i.e.\ 500 different sets of latent variables of size $k$ are generated, and a MSLE estimate is computed for each set. We observe that the distribution of the MSLE concentrates around the MLE (indicated by a dashed line) as $k$ increases, and that as predicted by the existing literature, independent draws induce a large bias for smaller values of $k$, while overlapping draws induce a large variance for smaller values of $k$.

\vspace{0.2cm}

The conclusions are slightly different for the IWVI estimator $\tilde{\theta}_n^k$, see Figure \ref{fig1:subfig3}. Indeed, it is a deterministic estimator conditional on the observed dataset $(\x_i)_{i=1}^n$, contrary to the MSLE where the randomness comes from both the observed dataset and from the set of latent variables. The boxplots correspond here to the fact that the expectation in the definition of $\tilde{\theta}_n^k$ is numerically approximated by a sample mean over $10^6$ sets of $k$ draws of the latent variables. The tightness of the boxes suggests that this is a sufficient number of replications to approximate the expectation. Notice that the error (the squared bias) resulting from taking any finite value of $k$ is always smaller than the corresponding error (squared bias + variance) of the MSLE, which suggests that we can benefit from removing the randomness in our objective.

\begin{figure}[h]
\centering
\begin{subfigure}{0.3\textwidth}
  \includegraphics[width=\textwidth]{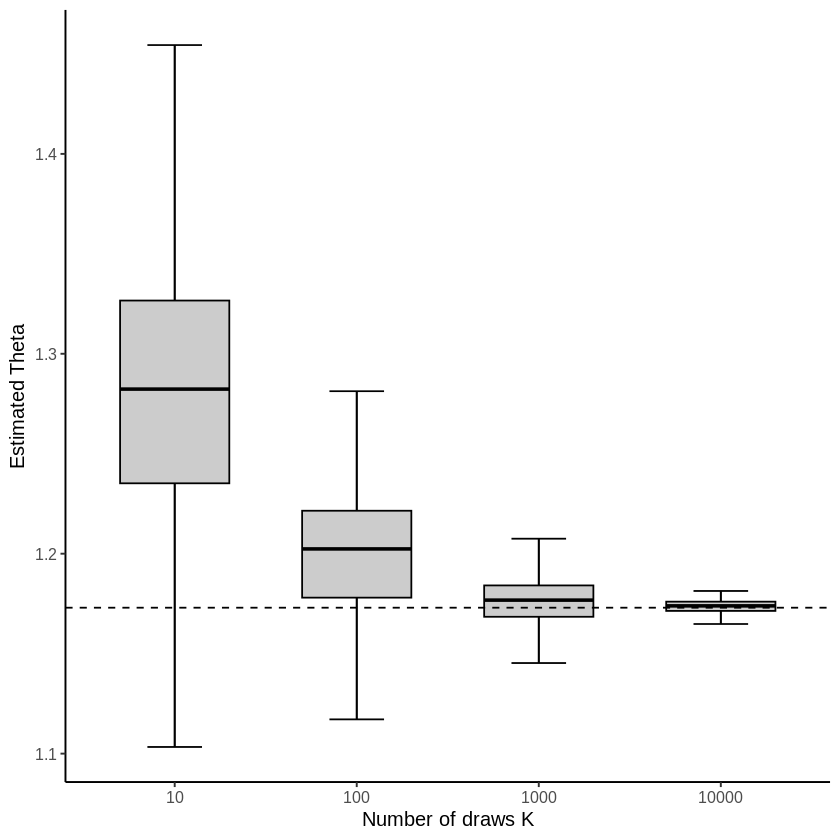}
  \caption{MSLE - independent draws}
  \label{fig1:subfig1}
\end{subfigure}
\hfill
\begin{subfigure}{0.3\textwidth}
  \includegraphics[width=\textwidth]{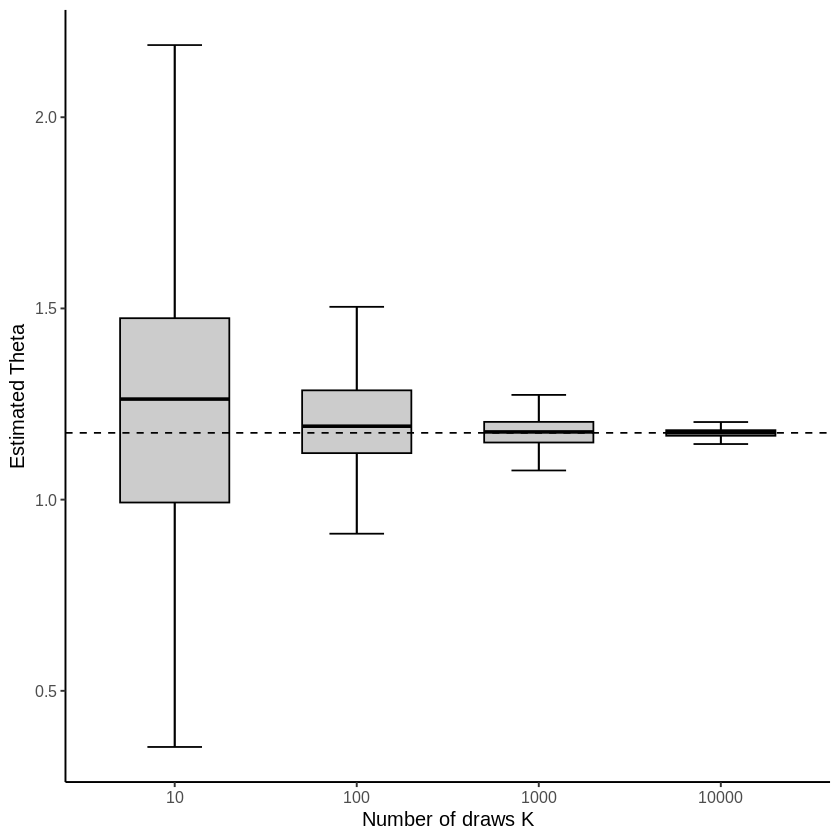}
  \caption{MSLE - overlapping draws}
  \label{fig1:subfig2}
\end{subfigure}
\hfill
\begin{subfigure}{0.3\textwidth}
  \includegraphics[width=\textwidth]{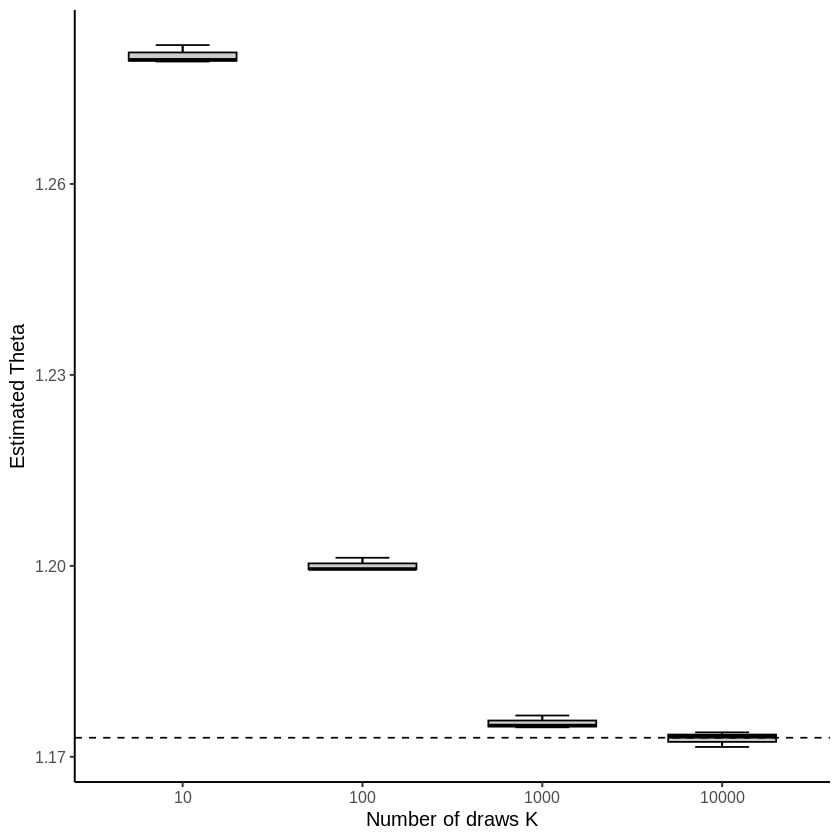}
  \caption{IWVI}
  \label{fig1:subfig3}
\end{subfigure}
\caption{Boxplots of MSLE and IWVI estimates over 500 replications of the draw of the latent set of size $k$ based on Cameron \& Travedi's dataset with $n = 100$ observations. The dashed line represents the maximum likelihood value. One can see that increasing the number of draws $k$ improves the estimation accuracy. Note that the scales are different in the three figures.}
\label{fig1:mainfig}
\end{figure}

\subsection{Simulated datasets} We now try to incorporate randomness in both the observed $x$ and the latent $z$ variables in our experiments. To do so, we generate $n=100$ datapoints from the same model with $\theta^*=1$, and we then perform inference on $\theta$ using both the MSLE (independent and overlapping) from random draws of latent variables and the IWVI estimate $\tilde{\theta}_n^k$. Across 500 replications of this procedure, we compare the Mean Square Error (MSE) of the estimates for the two MSLE estimators and for the IWVI, as detailed in Table \ref{table:MSLEvsIWVI}, with a particular focus on the bias and variance decomposition of the MSE: the variance part of the error is indeed indicated in Table \ref{table:MSLEvsIWVI} for each estimator. The findings are interesting as they provide valuable insights in the light of previous results in the simulated likelihood literature:

\begin{itemize}
    \item The MSLE with independent draws becomes more accurate as $k$ increases, with a non-ngeligible bias (measured by the difference between the MSE and the variance in brackets) for moderate values of $k$, and the MSE decreases rapidly and then stagnates. These observations are supported by the theory according to which the estimator is asymptotically biased when $k\lesssim\sqrt{n}$ and efficient as soon as $k\gtrsim\sqrt{n}$ (strictly).
    \item The MSLE with overlapping draws does not suffer from a large bias for small values of $k$, and improvements in the MSE are still important for large values of $k$. This is respectively explained by the asymptotic unbiasedness of the estimator, and by its efficiency in the $k\gtrsim n$ regime only. Note that the overall results are worse than for independent draws, as for Cameron-Trivedi's dataset.
    \item The IWVI estimator is quite similar to the MSLE with independent draws, with a nonnegligible bias for small values of $k$ and a quick decay of the MSE: $\tilde{\theta}_n^k$ thus does not require a very large number of draws $k$ to be asymptotically equivalent to the MLE. For example, for $k=500$, when constructing for each of the $500$ randomly generated datasets a $95\%$-confidence interval centered at $\tilde{\theta}_n^k$ and based on a plug-in estimate of the MLE asymptotic variance, $\theta^*$ turns out to belong to $94.6\%$ of them, suggesting that the asymptotics seem to work well at these sample sizes and that the efficiency regime is already achieved, as for the MSLE with independent draws ($94.8\%$ empirical coverage with the plug-in estimate of the asymptotic variance). At the opposite, the MSLE with overlapping draws only have $92.8\%$ empirical coverage when using a plug-in estimate of the MLE asymptotic variance (which is not the only part of the asymptotic variance of the overlapping MSLE), which suggests that we are in the intermediate regime $\sqrt{n}\lesssim k\lesssim n$ where the overlapping MSLE is not efficient yet.
\end{itemize}

\begin{table}[ht]
\centering
\begin{tabular}{c c c c c c c c c}
\hline\hline
$k$ & 10 & 20 & 50 & 100 & 200 & 500 & 1000 & 2000 \\ 
\hline
MSLE (ind) & 0.0395 & 0.0322 & 0.0277 & 0.0261 & 0.0256  & 0.0237 & 0.0234 & 0.0232 \\ 
            & (0.0291) & (0.0278) & (0.0263) & (0.0252) & (0.0244) & (0.0235) & (0.0234) & (0.0232) \\ 
\hline
MSLE (over) & 0.1687 & 0.0904 & 0.0508 & 0.0376 & 0.0306 & 0.0256 & 0.0245 & 0.0239 \\ 
            & (0.1643) & (0.0871) & (0.0499) & (0.0374) & (0.0305) & (0.0255) & (0.0244) & (0.0239) \\ 
\hline
IWVI & 0.0396 & 0.0326 & 0.0267 & 0.0250 & 0.0247 & 0.0236 & 0.0233 & 0.0232 \\ 
            & (0.0292) & (0.0261) & (0.0254) & (0.0243) & (0.0239) & (0.0235) & (0.0233) & (0.0232) \\ 
\hline
\end{tabular}
\caption{MSE of the IWVI estimator compared with the two versions (independent and overlapping draws) of the MSLE for several values of $k$ with $n=100$, over 500 repetitions of the experiment. (The term shown in parentheses below the MSE corresponds to the variance part of the MSE, excluding the bias, over the 500 repetitions.)}
\label{table:MSLEvsIWVI}
\end{table}

\subsection{Unbiasedness of the ELBO maximizer} 
We highlight here a phenomenon that can sometimes occur for $k=1$, and was observed in our experiments on Cameron-Trivedi's dataset. Additional experiments on this dataset show that the three estimators are actually already very good when $k=1$. Indeed,  all of them appear to be centered at the MLE for $k=1$ on Figure \ref{fig2:mainfig}, and $\tilde{\theta}_n^1$ even equal to the MLE (due to its nonrandomness) as observed in Figure \ref{fig2:subfig3}. 
This is the case because the expected ELBO maximizer $\tilde{\theta}_\infty^1$ turns out to be $\theta^*$ in our model (see the derivations in the appendix), and the ELBO maximizer $\tilde{\theta}_n^1$ for a finite dataset $(\x_i)_{1\leq i \leq n}$ can be computed in closed-form as $\frac{1}{n}\sum_{1\leq i \leq n}\x_i-\gamma$, where $\gamma$ is Euler's constant. We make similar observations for expectations of the MSLE. Note however that unbiasedness does not extend to values of $k$ different from 1, see Figure \ref{fig2:mainfig}.

\begin{figure}[h]
\centering
\begin{subfigure}{0.3\textwidth}
  \includegraphics[width=\textwidth]{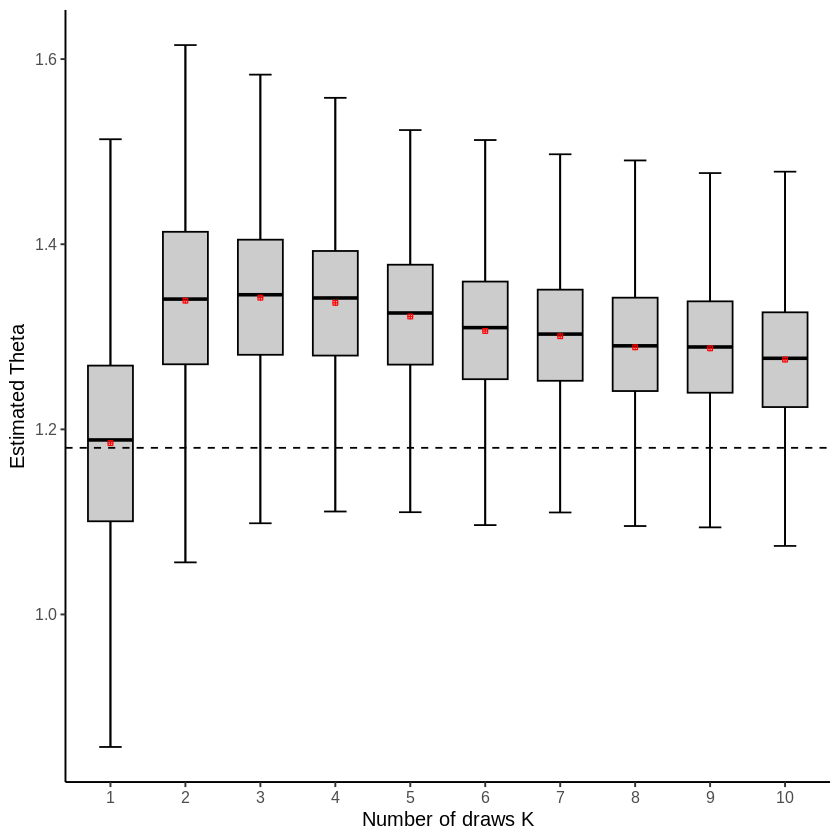}
  \caption{MSLE - independent draws}
  \label{fig2:subfig1}
\end{subfigure}
\hfill
\begin{subfigure}{0.3\textwidth}
  \includegraphics[width=\textwidth]{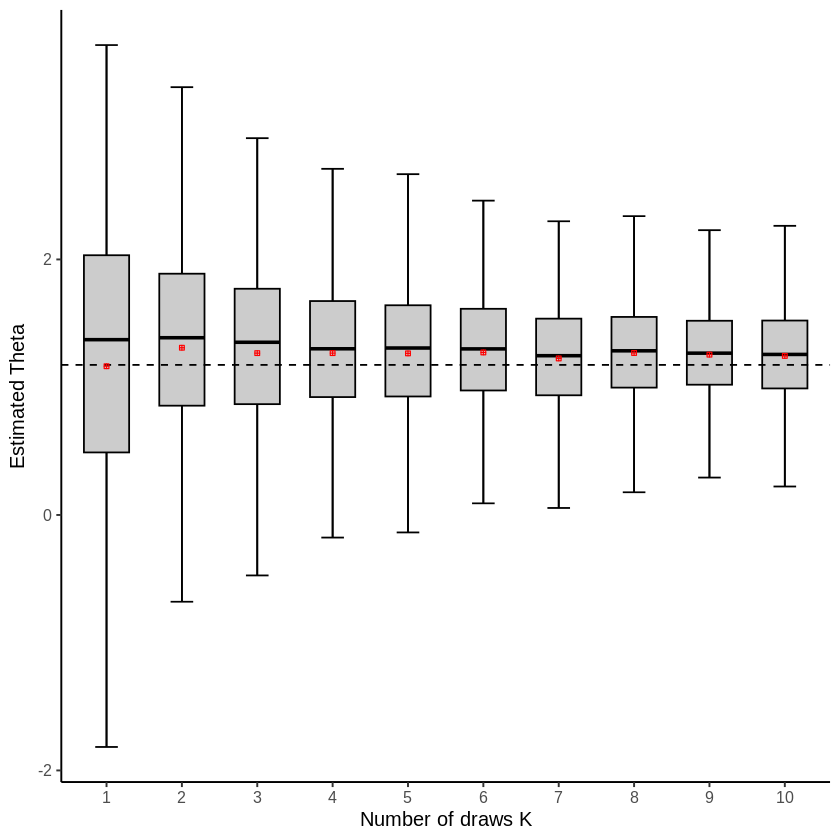}
  \caption{MSLE - overlapping draws}
  \label{fig2:subfig2}
\end{subfigure}
\hfill
\begin{subfigure}{0.3\textwidth}
  \includegraphics[width=\textwidth]{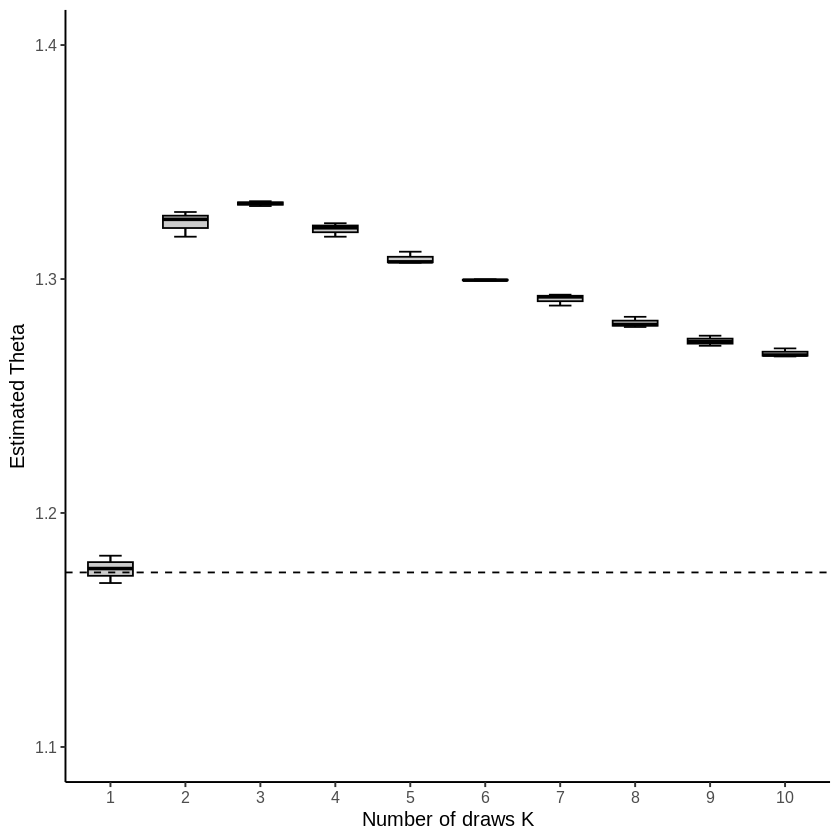}
  \caption{IWVI}
  \label{fig2:subfig3}
\end{subfigure}
\caption{Boxplots of MSLE and IWVI estimates over 500 replications of the latent set of size $k$ based on Cameron \& Travedi's dataset with $n = 100$ observations. The dashed line represents the maximum likelihood value, and the red points the mean of each boxplot. One can see that the three estimators are centered at the MLE for $k=1$.}
\label{fig2:mainfig}
\end{figure}

\begin{appendix}

\section{Proof of Theorem \ref{thm:consistency}}
\label{proof:thm:consistency}

Let us first define some preliminary notations. Under \ref{hyp:q}, we can define for every $\xi=(\theta,\phi)  \in \Xi$,  
\begin{equation}
    \label{eq:def:Dnell}
    D \nell(\xi)\eqdef - n^{-1} \sum_{i=1}^n \PE^\phi_\X\lrb{\log\lr{\lo^{-1} \sum_{\ell=1}^\lo r_\xi^{\x_i}(\bz_i^\ell)} } \quad  \mbox{where} \quad  r_\xi^\x(\bz)=\frac{p_\theta(\bz|\x)}{q_\phi(\bz|\x)} ,
\end{equation}
which is equal the variational gap divided by $n$, so that the following decomposition holds
\begin{equation} \label{eq:decomp:elbo}
    n^{-1}\lk  \nell (\xi)=n^{-1} \sum_{i=1}^n \log p_\theta(\x_i)-D \nell (\xi) .
\end{equation}
Now, Jensen's inequality applied to the log function yields 
\begin{equation}
    \label{eq:rnk:negatif}
    -D \nell(\xi) \leq n^{-1}\sum_{i=1}^n \log\ \PE^\phi_\X\lrb{\frac 1 \lo \sum_{\ell=1}^\lo r_\xi^{\x_{i}}(\bz_i^\ell) } = n^{-1}\sum_{i=1}^n \log \lr{\frac 1 \lo \sum_{\ell=1}^\lo \PE^\phi_\X\lrb{r_\xi^{\x_{i}}(\bz_i^\ell) }}=0 ,
\end{equation}
which in turn implies the nonnegativity of the variational gap via \eqref{eq:decomp:elbo}:
\begin{equation}
    \label{eq:minor:elbo} 
    n^{-1}\lk  \nell (\xi) \leq n^{-1} \sum_{i=1}^n \log p_\theta(\x_i) .
\end{equation}

Before proving Theorem \ref{thm:consistency}, we need the following lemma ensuring the convergence of the MCO evaluated at $(\thv,\phi)$ to the maximum of the expected log-likelihood, uniformly over $\phi\in\Phi$.

\begin{lemma}
    \label{lem:consistency:one}
    Assume \ref{hyp:theta:star}-\ref{hyp:q}. Then, for every $\phi\in\Phi$, we have $\PP_\star-\as$, 
    $$ 
     \lim_{n\wedge \lo \to \infty} n^{-1} \lk  \nell (\xivp{\phi})=\PE_\star\lrb{\log p_\thv(\x)} \quad \mbox{where} \quad \xivp{\phi}\eqdef (\thv,\phi) .
    $$
\end{lemma}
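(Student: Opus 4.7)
The plan is to exploit the decomposition \eqref{eq:decomp:elbo} at $\xi = \xivp{\phi}$, namely
\[
n^{-1}\lk\nell(\xivp{\phi}) = n^{-1}\sum_{i=1}^n \log p_\thv(\x_i) - D\nell(\xivp{\phi}),
\]
and reduce the statement to two limits: (a) $n^{-1}\sum_{i=1}^n \log p_\thv(\x_i) \to \PE_\star[\log p_\thv(\x)]$ $\PP_\star$-a.s.\ as $n \to \infty$, and (b) $D\nell(\xivp{\phi}) \to 0$ $\PP_\star$-a.s.\ as $n \wedge \lo \to \infty$. Claim (a) is a direct application of Kolmogorov's strong law; the integrability of $\log p_\thv(\x)$ is a consequence of \ref{hyp:theta:star}, which asserts that $\PE_\star[\log p_\thv(\x)]$ is a finite real number (the value of the maximum). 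The heart of the proof is (b).

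For (b), introduce $a_\lo(\x) \eqdef -\PE^\phi_\X[\log(\lo^{-1}\sum_{\ell=1}^\lo r_{\xivp{\phi}}^{\x}(\bz^\ell))]$, so that $D\nell(\xivp{\phi}) = n^{-1}\sum_{i=1}^n a_\lo(\x_i)$. Three ingredients drive the argument. First, $a_\lo(\x) \geq 0$ by the Jensen computation recorded in \eqref{eq:rnk:negatif}. Second, by the classical monotonicity of \cite{IWAE2015} (Jensen applied to the random-$\lo$-subset representation of the $(\lo+1)$-average), the map $\lo \mapsto a_\lo(\x)$ is nonincreasing. Third, the envelope $a_1(\x)$ is the Kullback--Leibler divergence from $q_\phi(\cdot|\x)$ to $p_\thv(\cdot|\x)$, whose $\PP_\star$-integrability follows from \ref{hyp:q}: since $1/r^\x_{\xivp{\phi}} = q_\phi/p_\thv$, the hypothesis directly controls $\PE_\star[\int q_\phi(\bz|\x)\log^+(q_\phi(\bz|\x)/p_\thv(\bz|\x))\,\rmd \bz]$, and the nonnegativity of the KL forces it to be dominated by its positive part in $\PP_\star$-expectation. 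Combining the almost-sure limit $\lo^{-1}\sum_{\ell=1}^\lo r_{\xivp{\phi}}^{\x}(\bz^\ell)\to 1$ (ordinary SLLN under $q_\phi(\cdot|\x)$) with this envelope via a monotone/dominated convergence step yields the pointwise vanishing $a_\lo(\x) \downarrow 0$ for $\PP_\star$-a.e.\ $\x$, and then monotone convergence at the outer $\PE_\star$-level gives $\PE_\star[a_\lo(\x)] \downarrow 0$.

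To upgrade the pointwise and mean convergences to the joint limit $n \wedge \lo \to \infty$, fix $\epsilon>0$, choose $\lo_0$ with $\PE_\star[a_{\lo_0}(\x)] < \epsilon$, and exploit monotonicity to get
\[
0 \;\leq\; n^{-1}\sum_{i=1}^n a_\lo(\x_i) \;\leq\; n^{-1}\sum_{i=1}^n a_{\lo_0}(\x_i) \qquad \text{for all } \lo \geq \lo_0.
\]
The right-hand side converges $\PP_\star$-a.s.\ to $\PE_\star[a_{\lo_0}(\x)] < \epsilon$ by the strong law applied to the nonnegative integrable i.i.d.\ variables $a_{\lo_0}(\x_i)$, so $\limsup_{n\wedge \lo \to \infty} D\nell(\xivp{\phi}) \leq \epsilon$ $\PP_\star$-a.s., and letting $\epsilon\downarrow 0$ closes the argument. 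The main obstacle is the pointwise vanishing $a_\lo(\x)\to 0$: it must be extracted from only the one-sided log-integrability provided by \ref{hyp:q}, and it is precisely the combination of the IWAE monotonicity with the KL envelope that makes this step go through without any two-sided moment bound on the importance weights.
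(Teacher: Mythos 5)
Your argument is correct in substance but follows a genuinely different route from the paper's. Where you reduce the joint limit $n\wedge\lo\to\infty$ to a fixed $\lo_0$ via the IWAE monotonicity $a_{\lo+1}(\x)\leq a_\lo(\x)$ (random-subset Jensen) and then apply the SLLN to the i.i.d.\ variables $a_{\lo_0}(\x_i)$, the paper instead avoids monotonicity altogether: it fixes an arbitrary block size $m$, writes $\bar S_\lo$ as an unevenly weighted average of block means of size $m$ plus a remainder (Euclidean division $\lo=m\e{\lo/m}+r$), uses concavity of the logarithm to bound $-D\nell(\xivp{\phi})$ from below by block terms, and then lets $n\wedge\lo\to\infty$ before sending $m\to\infty$ through \Cref{lem:consist:log} applied under the joint expectation $\PE_\star\PE^\phi_\X$. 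Your route buys a cleaner statement (pointwise-in-$\x$ vanishing of the conditional gap, plus a transparent $\epsilon$--$\lo_0$ upgrade, with the a.s.\ exceptional set handled by a countable sequence $\epsilon=1/j$); the paper's route buys independence from the monotonicity lemma and works directly with the averaged gap. Both hinge on the same integrability input: \ref{hyp:q} controls $\PE_\star\PE^\phi_\X[(\log r^\x_{\xivp{\phi}}(\bz))^-]$, while the positive part is free since $\log^+ r\leq r$ and $\PE^\phi_\X[r]=1$.

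One step in your write-up should be made precise: the ``monotone/dominated convergence step'' yielding $a_\lo(\x)\to 0$ for fixed $\x$. The KL envelope $a_1(\x)$ dominates the \emph{expectations} $a_\lo(\x)$, not the integrand $\log\bar S_\lo(\bz,\x)$ pointwise, so plain dominated convergence with that envelope does not apply (and no fixed integrable dominating function is available in general, since $\sup_\lo\lo^{-1}\sum_\ell(\log U_\ell)^-$ need not be integrable without an $L\log L$ condition). The correct argument is exactly the paper's \Cref{lem:consist:log}, applied conditionally on $\x$: sandwich $\lo^{-1}\sum_{\ell}\log U_\ell\leq\log\bar S_\lo\leq\bar S_\lo-1$, note that both bounding sequences converge a.s.\ with convergent expectations (SLLN), and invoke the generalized dominated convergence theorem (Pratt-type, with varying bounds); equivalently, the negative parts $(\log\bar S_\lo)^-$ are uniformly integrable because they are dominated by $\lo^{-1}\sum_\ell(\log U_\ell)^-$, which converges in $\mathrm{L}^1$. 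With that substitution your proof is complete, and otherwise matches the paper's conclusion.
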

\begin{proof}
    By \eqref{eq:decomp:elbo}, for every $\phi\in\Phi$,
    \begin{equation*}
       n^{-1} \lk \nell (\xivp{\phi})=n^{-1} \sum_{i=1}^n \log p_\thv(\x_i)- D \nell(\xivp{\phi}).
    \end{equation*}
    The first term of the rhs converges $\PP_\star-\as$ to $\PE_\star\lrb{\log p_\thv(\x)}$ according to the strong Law of Large Numbers. We now prove that $\PP_\star-\as$, $\lim D \nell(\xivp{\phi})= 0$ as $n\wedge \lo \to \infty$. By \eqref{eq:rnk:negatif}, $D \nell(\xivp{\phi})\geq 0$. It thus remains to show that $\PP_\star-\as$, $\limsup D \nell(\xivp{\phi})\leq 0$ as $n\wedge \lo \to \infty$. The rest of the argument is devoted to obtaining this limiting result. 

    Fix an arbitrary integer $m$. For any integer $\lo$, write the Euclidean division $\lo=m \e{\lo/m}+r$ where $r\in [0:m-1]$. Then, setting $\bar S_\lo(\bz,\x)\eqdef \lo^{-1} \sum_{\ell=1}^\lo r_\xivp{\phi}^\x(\bz^\ell)$, we obviously have 
    \begin{equation} \label{eq:decomp:block}
        \bar S_\lo(\bz,\x)= \frac 1 \lo \lr{\sum_{\j=0}^{\e{\lo/m}-1} m \ a^m_\j(\bz,\x) + \sum_{\ell=1}^r b_l(\bz,\x)},  
    \end{equation}
    where $a^m_\j(\bz,\x)= m^{-1}\sum_{\ell=m \j+1}^{m(\j +1)}  r_\xivp{\phi}^\x(\bz^\ell)$ and $b_l(\bz,\x)=r_\xivp{\phi}^\x(\bz^{m \e{\lo/m}+l})$. The rhs of \eqref{eq:decomp:block} is an average with uneven weights and the concavity of the log function then yields for all $i \in [1:n]$, 
    $$ 
    \log \bar S_\lo(\bz_i,\x_i) \geq \frac 1 \lo \lr{\sum_{\j=0}^{\e{\lo/m}-1} m \ \log a^m_\j(\bz_i,\x_i) + \sum_{\ell=1}^r \log b_l(\bz_i,\x_i)}.
    $$
    Taking the conditional expectation wrt $\X$ and then the average wrt the index $i$,      
    \begin{align*}
        -D \nell(\xivp{\phi})&\geq  \frac 1 n \sum_{i=1}^n \frac 1 \lo \lr{\sum_{\j=0}^{\e{\lo/m}-1} m \ \PE^\phi_\X\lrb{\log a^m_\j(\bz_i,\x_i)} + \sum_{\ell=1}^r \PE^\phi_\X\lrb{\log b_l(\bz_i,\x_i)}}     \\
        & = \frac{m \e{\lo/m}}{ \lo} \ \lr{\frac 1 n \sum_{i=1}^n \PE^\phi_\X\lrb{\log a^m_0(\bz_i,\x_i)}} + \frac r \lo \lr{\frac 1 n \sum_{i=1}^n \PE^\phi_\X\lrb{\log b_1(\bz_i,\x_i)}   },
    \end{align*}
    where in the last equality, we have used that neither $\PE^\phi_\X \lrb{\log a^m_\j(\bz_i,\x_i)}$ depends on $\j$, nor $\PE^\phi_\X\lrb{\log b_l(\bz_i,\x_i)}$ on $k$. Letting now $n \wedge \lo$ go to infinity, we finally obtain that $\PP_\star-\as$, 
   $$
   -\limsup_{n\wedge \lo \to \infty} D \nell (\xivp{\phi})=\liminf_{n\wedge \lo \to \infty} -D \nell (\xivp{\phi}) \geq \PE^\phi_\X \lrb{\log \lr{\frac 1 m \sum_{\ell=1}^m  r_\xivp{\phi}^\x(\bz^\ell) }}.
   $$
Note that the integer $m$ in the rhs is arbitrary. Now, \Cref{lem:consist:log} applied to $U_\ell=r_\xivp{\phi}^\x(\bz^\ell)$ and $\PE=\PE_\star \PE^\phi_\X$ yields
   $$ 
    \lim_{m \to\infty} \PE_\star \PE^\phi_\X \lrb{\log \lr{\frac 1 m \sum_{\ell=1}^m  r_\xivp{\phi}^\x(\bz^\ell) }}=0.
   $$
   Hence, $-\limsup_{n\wedge \lo \to \infty} D \nell (\xivp{\phi}) \geq 0$, $\PP_\star-\as$ and the proof is concluded.  
\end{proof}

We are now ready to prove Theorem \ref{thm:consistency}.
\begin{proof}

    Let $\Wset$ be any open neighborhood of $\thv$. We will show that there exists a $\PP_\star-\as$ finite random integer $m$ such that $\t \theta \nell \in \Wset$ for $n\wedge \lo \geq m$.  Write $\bar \Wset$ the compact set $\bar \Wset=\Theta \setminus \Wset$ and for any $\theta \in \Theta$, define $\l(\theta)=\PE_\star[\log p_\theta(\x)]$. By \ref{hyp:theta:star}, we have for any $\theta_0 \in \bar \Wset$, 
    $$ 
     \l(\theta_0) < \l(\thv).
    $$
    Since $ \rho \mapsto f_\rho(\x)=\sup_{\theta\in \Theta} \log^+ p_\theta(\x)- \sup_{\theta \in \ball{\theta_0,\rho}} \log p_\theta(\x)\geq 0$ and non-decreasing when $\rho$ decreases to $0$, monotone convergence yields: $\lim_{\rho \searrow 0} \PE_\star[f_\rho(\x)]= \PE_\star[ \lim_{\rho \searrow 0} f_\rho(\x)]$. Expanding the expression of $f_\rho$ and using \ref{hyp:unif}-(ii) thus show that 
    $$ 
     \lim_{\rho \searrow 0} \PE_\star\lrb{ \sup_{\theta \in \ball{\theta_0,\rho}} \log p_\theta(\x) } = \PE_\star\lrb{\lim_{\rho \searrow 0}  \lr{\sup_{\theta \in \ball{\theta_0,\rho}} \log p_\theta(\x)} } \leq  \PE_\star[\log p_{\theta_0}(\x)]=\l(\theta_0),
    $$ 
    where the last inequality follows from \ref{hyp:unif}-(i). Hence, for any $\theta_0\in \bar \Wset$, there exists $\rho_0>0$ such that  
    \begin{equation*}
        \PE_\star\lrb{ \sup_{\theta \in \ball{\theta_0,\rho_0}} \log p_\theta(\x) } < \l(\thv) .
    \end{equation*}
    By compactness of $\bar \Wset$, there exist $\ell\geq 1$ and $\set{(\theta_j,\rho_j) \in \bar \Wset \times \rsetpos}{j \in [1:\ell]}$ such that 
    \begin{equation}
        \label{eq:consist:one}
        \bar \Wset \subset \cup_{j=1}^\ell \ball{\theta_j,\rho_j} \quad \mbox{and}  \quad \PE_\star\lrb{ \sup_{\theta \in \ball{\theta_j,\rho_j}} \log p_\theta(\x) } < \l(\thv) \eqsp, \quad j \in [1:\ell].
    \end{equation}
    Using \eqref{eq:minor:elbo}, the strong Law of Large Numbers and \eqref{eq:consist:one}, we have $\PP_\star-\as$, 
    \begin{align}
        \label{eq:consist:two}
        \limsup_{n\wedge \lo \to \infty} \sup_{\xi \in \bar \Wset \times \Phi}& n^{-1} \lk  \nell (\xi)  \leq \limsup_{n \to \infty}  \sup_{\theta \in \bar\Wset} n^{-1} \sum_{i=1}^n \log p_\theta(\x_i) =\limsup_{n \to \infty}  \sup_{j\in[1:\ell]}\sup_{\theta \in \ball{\theta_j,\rho_j}} n^{-1} \sum_{i=1}^n \log p_\theta(\x_i) \nonumber \\
        &\leq \limsup_{n \to \infty}  \sup_{j\in[1:\ell]}n^{-1} \sum_{i=1}^n \sup_{\theta \in \ball{\theta_j,\rho_j}} \log p_\theta(\x_i) = \sup_{j\in[1:\ell]}  \limsup_{n \to \infty}  n^{-1} \sum_{i=1}^n \sup_{\theta \in \ball{\theta_j,\rho_j}} \log p_\theta(\x_i) \\
        &= \sup_{j\in[1:\ell]} \PE_\star\lrb{ \sup_{\theta \in \ball{\theta_j,\rho_j}} \log p_\theta(\x) } <\l(\thv)\,.
    \end{align}
But for any  $\phi\in\Phi$, applying \Cref{lem:consistency:one}, we have $\PP_\star-\as$, $\l(\thv)=\lim_{n\wedge \lo\to \infty}
n^{-1} \lk  \nell (\xivp{\phi})$ where we used the notation
$\xivp{\phi}=(\thv,\phi)$. Finally, there exists a $\PP_\star-\as$ finite random integer $m$ such that, provided that $n\wedge \lo\geq m$, we have 
    $$ 
    \sup_{\xi \in \bar \Wset \times \Phi} n^{-1} \lk  \nell (\xi) < n^{-1} \lk  \nell (\xivp{\phi}) \leq n^{-1} \lk  \nell (\t \xi  \nell)\eqsp, \quad \PP_\star-\as
    $$ 
    This implies $\t \xi  \nell \notin \bar \Wset \times \Phi$, hence $\t \theta \nell \in \Wset$  and the proof is completed. 
\end{proof}

\section{Proof of Theorem \ref{thm:consistency-variational}}
\label{proof:thm:variational}

We first need the following lemma that holds under Assumption \ref{hyp:moment} only, and that mainly states that the variational gap $n D \nell(\xi)$ is asymptotically of order $\Vratio(\xi)/{2\lo}$, whatever the value of $\xi=(\theta,\phi)\in\Xi$. Notice that $\Vratio(\xi)/{2\lo}$ is exactly the first-order term in the development \eqref{devIWELBO}.

\begin{lemma}
    \label{lem:ell-D-lim}
        Assume \ref{hyp:moment}. Then, 
        \begin{enumerate}[(i)]
            \item \label{item::ell-D-lim:one}for every  $\xi=(\theta,\phi)\in\Xi$, we have $\PP_\star-\as$,
            $$ 
            \lim_{n\wedge \lo \to \infty} \lo D \nell(\xi) =\Vratio(\xi)/2.  
            $$
            \item \label{item::ell-D-lim:two} Moreover, for every set $\Cset \subset \Xi$, we have $\PP_\star-\as$, 
\begin{align}
    &             \liminf_{n\wedge \lo \to \infty} \lr{\inf_{\xi \in \Cset}\lo D \nell(\xi)} \geq \PE_\star \lrb{\inf_{\xi \in \Cset}  \Vratio_\xi(\x)},\label{eq:inf:D}\\
    &             \limsup_{n\wedge \lo \to \infty} \lr{\sup_{\xi \in \Cset}\lo D \nell(\xi)} \leq \PE_\star \lrb{\sup_{\xi \in \Cset}  \Vratio_\xi(\x)}.\label{eq:sup:D}
\end{align}    
        \end{enumerate}
\end{lemma}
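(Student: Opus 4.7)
Fix $\xi=(\theta,\phi)\in\Xi$, set $\bar S_\lo(\x)=\lo^{-1}\sum_{\ell=1}^\lo r_\xi^\x(\bz^\ell)$ and $T_\lo(\x)=\bar S_\lo(\x)-1$, so that $\PE^\phi_\X[T_\lo(\x)]=0$ and $\PE^\phi_\X[T_\lo(\x)^2]=\Vratio_\xi(\x)/\lo$. Writing $F_\lo(\x,\xi)\eqdef -\lo\,\PE^\phi_\X[\log(1+T_\lo(\x))]$, the identity $\lo D \nell(\xi)=n^{-1}\sum_{i=1}^n F_\lo(\x_i,\xi)$ reduces the lemma to the joint asymptotics of an empirical average. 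The plan is first to establish the pointwise limit $F_\lo(\x,\xi)\to\Vratio_\xi(\x)/2$ under a $\lo$-uniform integrable envelope, then to transfer to the empirical average via a monotone argument combined with the strong law; the $\sup_\xi$ structure in \ref{hyp:moment} then produces (ii).

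The pointwise step is a second-order Taylor expansion of $\log$ around $1$, split on $\{|T_\lo|\le 1/2\}$ and its complement. On the good event, $|\log(1+t)-t+t^2/2|\le C|t|^3$ combined with $\PE^\phi_\X[T_\lo]=0$ and the Marcinkiewicz-Zygmund estimate $\PE^\phi_\X[T_\lo^4]=O(\lo^{-2})$ (valid under the $L^4$ control of $r_\xi^\x$ from \ref{hyp:moment}) gives $\lo\,\PE^\phi_\X[\log(1+T_\lo)\indi{|T_\lo|\le 1/2}]=-\Vratio_\xi(\x)/2+o(1)$. On the complement, Markov gives $\PE^\phi_\X[\indi{|T_\lo|>1/2}]=O(\lo^{-2})$; the half $\{T_\lo>1/2\}$ is handled by $0\le\log(1+T_\lo)\le T_\lo$, while on $\{T_\lo\le -1/2\}\subset\{\bar S_\lo\le 1\}$, Jensen's concavity inequality produces $|\log\bar S_\lo|^p\le \lo^{-1}\sum_\ell|\log r_\xi^\x(\bz^\ell)|^p$, whence $\PE^\phi_\X[|\log\bar S_\lo|^p\indi{\bar S_\lo\le 1}]\le\PE^\phi_\X[|\log r_\xi^\x(\bz)|^p]$ uniformly in $\lo$ for $p=4$. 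A H\"older split then yields $\lo\,\PE^\phi_\X[|\log(1+T_\lo)|\indi{|T_\lo|>1/2}]=O(\lo^{-1/2})\to 0$, and these same bounds furnish an envelope $G_\xi(\x)\ge\sup_\lo|F_\lo(\x,\xi)|$ with $\PE_\star[G_\xi]<\infty$.

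To pass to the empirical average, set $\Delta_{\bar\lo}(\x)=\sup_{\lo'\ge\bar\lo}|F_{\lo'}(\x,\xi)-\Vratio_\xi(\x)/2|$: then $\Delta_{\bar\lo}(\x)\downarrow 0$ pointwise and $\Delta_{\bar\lo}(\x)\le G_\xi(\x)+\Vratio_\xi(\x)/2\in L^1(\PP_\star)$. For each fixed $\bar\lo$ the strong LLN gives $n^{-1}\sum_i\Delta_{\bar\lo}(\x_i)\to\PE_\star[\Delta_{\bar\lo}]$, while monotone convergence gives $\PE_\star[\Delta_{\bar\lo}]\downarrow 0$. Combined with the ordinary LLN applied to $\Vratio_\xi(\x_i)/2$ and the fact that $|n^{-1}\sum_iF_\lo(\x_i,\xi)-n^{-1}\sum_i\Vratio_\xi(\x_i)/2|\le n^{-1}\sum_i\Delta_{\bar\lo}(\x_i)$ once $\lo\ge\bar\lo$, letting first $n\wedge\lo\to\infty$ and then $\bar\lo\to\infty$ proves (i). For (ii) the decisive observation is that \ref{hyp:moment} has $\sup_{\xi\in\Xi}$ under the outer expectation, so each remainder above is uniform in $\xi$: there exists $\eta_\lo(\x)\to 0$ with $\PE_\star[\sup_\lo\eta_\lo]<\infty$ and $\sup_{\xi\in\Xi}|F_\lo(\x,\xi)-\Vratio_\xi(\x)/2|\le\eta_\lo(\x)$. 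Then $\inf_{\xi\in\Cset}\lo D \nell(\xi)\ge n^{-1}\sum_i\inf_{\xi\in\Cset}F_\lo(\x_i,\xi)\ge n^{-1}\sum_i\inf_{\xi\in\Cset}\Vratio_\xi(\x_i)/2-n^{-1}\sum_i\eta_\lo(\x_i)$; the LLN on the first empirical term and the monotone argument of (i) applied to $\eta_\lo$ yield \eqref{eq:inf:D}, and \eqref{eq:sup:D} is symmetric.

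The main obstacle is the honest $\lo$-uniform control of $\PE^\phi_\X[|\log\bar S_\lo|^p]$ on $\{\bar S_\lo\le 1\}$, where $\bar S_\lo$ can be arbitrarily small: the Jensen/concavity trick exploiting $\log\bar S_\lo\ge\lo^{-1}\sum_\ell\log r_\xi^\x(\bz^\ell)$ transfers integrability from $\log r_\xi^\x$ (supplied in $L^4$ by \ref{hyp:moment}) to $\log\bar S_\lo$ uniformly in $\lo$, and matching this with the Markov tail decay of $T_\lo^4$ is what pins down the correct $o(1/\lo)$ rate.
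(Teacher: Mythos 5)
Your proof is correct and structurally parallel to the paper's: both reduce $\lo D\nell(\xi)$ to the empirical average of $\Vratio_\xi(\x_i)/2$ plus a remainder of order $O(\lo^{-1/2})$ times an integrable envelope of exactly the form appearing in \ref{hyp:moment}, and both rest on the same key device for controlling $\log\bar S_\lo$ where $\bar S_\lo$ is small, namely Jensen's inequality $\log\bar S_\lo\geq\lo^{-1}\sum_\ell\log r_\xi^\x(\bz^\ell)$ (this is precisely \Cref{lem:borne:log:U} in the paper). The execution differs in two places. For the pointwise expansion, the paper avoids any truncation by using the exact remainder identity $|\log r-(r-1)+(r-1)^2/2|\leq (r-1)^2|\log r|$, valid for all $r>0$ (\Cref{lem:log}), followed by Cauchy--Schwarz, Marcinkiewicz--Zygmund and the bound $\normsx{\log\bar S_\lo}{\phi}{2}=O(\lo^{-1/2})$. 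Your truncation at $|T_\lo|\leq 1/2$ also works, but it forces you to track the corrections $\lo\,\PE^\phi_\X[T_\lo\indiacc{|T_\lo|>1/2}]$ and $\lo\,\PE^\phi_\X[T_\lo^2\indiacc{|T_\lo|>1/2}]$ (which you did not write out), and on $\{T_\lo>1/2\}$ the H\"older split must be chosen with care: the natural choice $\normsx{T_\lo}{\phi}{4}\,\PP^\phi_\X(|T_\lo|>1/2)^{3/4}$ produces an envelope proportional to $\normsx{r_\xi^\x(\bz)}{\phi}{4}^4$, which \ref{hyp:moment} does \emph{not} control (it only supplies the third power times log-factors); use instead $t\leq 4t^3$ for $t\geq 1/2$, so that $\lo\,\PE^\phi_\X[T_\lo\indiacc{T_\lo>1/2}]\leq 4\lo\,\PE^\phi_\X[|T_\lo|^3]=O(\lo^{-1/2})\normsx{r_\xi^\x(\bz)}{\phi}{4}^{3}$. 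For the passage to the empirical average, your monotone/diagonal argument via $\Delta_{\bar\lo}$ is valid but heavier than necessary: since every remainder is explicitly bounded by $M\lo^{-1/2}H_\xi(\x)$ with $H_\xi$ the \ref{hyp:moment}-envelope, the paper simply notes that $n^{-1}\sum_iH_\xi(\x_i)$ is a.s.\ bounded by the strong law, so the prefactor $\lo^{-1/2}$ kills the remainder as $n\wedge\lo\to\infty$. Finally, observe that your part (ii), exactly like the paper's own proof, delivers the bounds with $\PE_\star[\inf_{\xi\in\Cset}\Vratio_\xi(\x)]/2$ and $\PE_\star[\sup_{\xi\in\Cset}\Vratio_\xi(\x)]/2$; the factor $1/2$ is missing from the displayed statement \eqref{eq:inf:D}--\eqref{eq:sup:D} (as written they would contradict part (i) for a singleton $\Cset$), so you have in fact proved the corrected version that the paper itself establishes and uses.
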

\begin{proof}
\begin{enumerate}[(i)]
    \item 
For any $\xi=(\theta,\phi) \in \Xi$, set $\bar S_\lo(\bz,\x)\eqdef \lo^{-1} \sum_{\ell=1}^\lo r_\xi^\x(\bz^\ell)$ and   
\begin{equation*}
    A_\lo (\x)\eqdef \lo\PE^\phi_\X \lrb{\log \lr{\bar S_\lo(\bz,\x)}}\,,\quad \Vratio_\xi (\x)\eqdef\PE^\phi_\X \lrb{\lr{ r_\xi(\bz^1,\x)-1}^2}\,, \quad C_\lo (\x)=A_\lo (\x)+\Vratio_\xi (\x)/2.
\end{equation*}
Applying \Cref{lem:log} to $r=\bar S_\lo(\bz,\x)$ and noting that $\PE^\phi_\X [\bar S_\lo(\bz,\x)]=1$ and $\lo\PE^\phi_\X \lrb{\lr{\bar S_\lo(\bz,\x)-1}^2}=\Vratio_\xi(\x)$, we finally get the bound
\begin{equation} \label{eq:bound-C-ell-one}
    |C_\lo(\x)|  \leq  \lo \PE^\phi_\X\lrb{ (\bar S_\lo(\bz,\x)-1)^2 \times \left| \log(\bar S_\lo(\bz,\x))\right|}.
\end{equation}
Now, write the decomposition
\begin{equation}
    -\lo D \nell(\xi) = n^{-1} \sum_{i=1}^n A_\lo(\x_i)=- {n^{-1} \sum_{i=1}^n \Vratio_\xi(\x_i)} /2 + n^{-1} \sum_{i=1}^n C_\lo(\x_i). \label{eq:bound:remainder}  
\end{equation}
The first term of the rhs of \eqref{eq:bound:remainder} converges $\PP_\star-\as$ to
$-\Vratio(\xi)/2$ by the strong LLN. We now show that $\PP_\star-\as$,
$\lim n^{-1} \sum_{i=1}^n C_\lo(\x_i)=0$ as $n\wedge \lo$ tends to
infinity. Applying the Cauchy-Schwarz inequality to
\eqref{eq:bound-C-ell-one} yields 
\begin{align*}
    |C_\lo(\x)| & \leq   \lo \normsx{\lr{\bar
                   S_\lo(\bz,\x)-1}^2}{\phi}{2} \normsx{\log(\bar S_\lo(\bz,\x))}{\phi}{2} =\lo \normsx{\bar S_\lo(\bz,\x)-1}{\phi}{4}^2 \normsx{\log(\bar S_\lo(\bz,\x))}{\phi}{2}.
\end{align*}
We now bound the first term of the rhs by Marcinkiewicz-Zygmund's inequality and use \Cref{lem:borne:log:U}-(ii) to bound the second one. Then, there exists a constant $M$ not depending on $(\lo,\x,\xi)$ such that 
\begin{align} \label{eq:bound:Cell}
    |C_\lo(\x)| & \leq   M \lo^{-1/2}\normsx{r_\xi^\x(\bz)}{\phi}{4}^3 \lr{\normsx{\log(r_\xi^\x(\bz))}{\phi}{4}+1}.
\end{align}
Then,  
\begin{align*}
0\leq  n^{-1}\sum_{i=1}^n |C_\lo(\x_i)| &\leq M \lo^{-1/2} n^{-1}\sum_{i=1}^n \normsx{r_\xi^{\x_i}(\bz)}{\phi}{4}^3 \lr{\normsx{\log(r_\xi^{\x_i}(\bz))}{\phi}{4}+1} ,
\end{align*}
which converges $\PP_\star-\as$ to $0$ as $n \wedge \lo \to \infty$ under \ref{hyp:moment}. The proof of (i) is completed.  
\item Noting that   
\begin{equation*}
    \sup_{\xi\in\Cset} n^{-1}\sum_{i=1}^n |C_\lo(\x_i)| \leq n^{-1} \sum_{i=1}^n \sup_{\xi\in\Cset}  |C_\lo(\x_i)|, 
\end{equation*} 
and using \eqref{eq:bound:Cell} to bound the rhs we deduce that under \ref{hyp:moment},  $\PP_\star-\as$, $ \sup_{\xi\in\Cset} n^{-1} \sum_{i=1}^n |C_\lo(\x_i)|$ converges to $0$ as $n\wedge \lo \to \infty$. 
Considering \eqref{eq:bound:remainder}, we finally have, $\PP_\star-\as$, 
\begin{align*}
   \limsup_{n\to\infty} \lr{\sup_{\xi\in\Cset} -\lo D \nell(\xi)} &\leq \limsup_{n\to\infty} \lr{\sup_{\xi\in\Cset} - {n^{-1} \sum_{i=1}^n \Vratio_\xi(\x_i)} /2} + \limsup_{n\to\infty} \lr{\sup_{\xi\in\Cset} n^{-1} \sum_{i=1}^n C_\lo(\x_i)} \\
   &= - \liminf_{n\to\infty} \lr{\inf_{\xi\in\Cset} {n^{-1} \sum_{i=1}^n \Vratio_\xi(\x_i)} /2} \leq  - \liminf_{n\to\infty}  {n^{-1} \sum_{i=1}^n \inf_{\xi\in\Cset} \Vratio_\xi(\x_i)} /2\\
   &=-\PE_\star\lrb{\inf_{\xi \in \Cset} \Vratio_\xi(\x) }/2,
\end{align*}
which shows \eqref{eq:inf:D}. Similarly \eqref{eq:sup:D} may be proved by following the same lines as the proof of \eqref{eq:inf:D} and replacing $\liminf_{n\to\infty}$ by $\limsup_{n\to\infty}$ and $\inf_{\xi \in \Cset}$ by $\sup_{\xi \in \Cset}$. It is omitted for brevity.  
\end{enumerate}
\end{proof}

We are now ready to prove Theorem \ref{thm:consistency-variational}.

\begin{proof}
Let $\Wset$ be any open neighborhood of $\phiv$. We will show that there exists a $\PP_\star-\as$ finite random integer $m$ such that $\t \phi \nell \in \Wset$ as soon a $n\wedge \lo \geq m$.  Write $\bar \Wset$ the compact set $\bar \Wset=\Phi \setminus \Wset$ and recall that $\Vratio_\xi(\x)\eqdef\PE^\phi_\X \lrb{
    \lr{r_\xi^\x(\bz)-1}^2}$ so that $\Vratio(\theta,\phi)=\PE_\star\lrb{\Vratio_\xi(\x)}$ for any $\xi=(\theta,\phi) \in \Xi$. By \ref{hyp:variational}-(i), we have for any $\phi_0 \in \bar \Wset$, 
$$ 
 \Vratio(\thv,\phi_0) > \Vratio(\thv,\phiv).
$$
For any $\xi=(\theta,\phi) \in\Xi$, define the notation $\tball{\xi,\rho}=\ball{\theta,\rho} \times \ball{\phi,\rho}$. When $\rho$ is decreasing, the function $\rho \mapsto \inf_{\xi \in \tball{\xivp{\phi_0},\rho} } \Vratio_{\xi}(\x)$ is non-decreasing and non-negative. The monotone convergence then yields
$$ 
 \lim_{\rho \searrow 0} \PE_\star\lrb{ \inf_{\xi \in \tball{\xivp{\phi_0},\rho}}  \Vratio_{\xi}(\x) } = \PE_\star\lrb{\lim_{\rho \searrow 0}  \lr{\inf_{\xi \in \tball{\xivp{\phi_0},\rho}} \Vratio_{\xi}(\x)} } =\Vratio(\thv,\phi_0),
$$
where the last equality follows from the continuity of $\xi \mapsto \Vratio_\xi(\x)$ as stated in \ref{hyp:variational}-(ii). 
Hence, for any $\phi_0\in \bar \Wset$, there exists $\rho_0>0$ such that  
\begin{equation*}
    \PE_\star\lrb{ \inf_{\xi \in \tball{\xivp{\phi_0},\rho_0}}  \Vratio_{\xi}(\x) } > \Vratio(\thv,\phiv).
\end{equation*}
By compactness of $\{\thv\}\times \bar \Wset$, there exist $\gamma>0$, $\ell\geq 1$ and $\set{(\phi_j,\rho_j) \in \bar \Wset \times \rsetpos}{j \in [1:\ell]}$ such that 
\begin{equation*}
    \bar \Wset \subset \cup_{j=1}^\ell \ball{\phi_j,\rho_j} \quad \mbox{and}  \quad \PE_\star\lrb{ \inf_{\xi \in \tball{\xivp{\phi_j},\rho_j}}  \Vratio_{\xi}(\x) } > \Vratio(\thv,\phiv)+\gamma \eqsp, \quad j \in [1:\ell].
\end{equation*}
Denote $\check \rho=\min\set{\rho_j}{j \in [1:\ell]}$. Then, applying \Cref{lem:ell-D-lim}-\ref{item::ell-D-lim:two}, we have
$\PP_\star-\as$, 
\begin{align}
    \label{eq:consist:var:two}
    \liminf_{n\wedge \lo \to \infty} \lr{\inf_{\xi \in \ball{\thv,\check \rho} \times \bar \Wset } \lo D \nell(\xi)} &\geq \inf_{j\in [1:\ell]}  \liminf_{n\wedge \lo \to \infty} \lr{\inf_{\xi \in \tball{\xivp{\phi_j},\rho_j}}\lo D \nell(\xi)} \nonumber \\
    &\geq \inf_{j\in [1:\ell]} \PE_\star\lrb{ \inf_{\xi \in \tball{\xivp{\phi_j},\rho_j}}  \Vratio_{\xi}(\x) } > \Vratio(\thv,\phiv)+\gamma.
\end{align}
The constant $\gamma>0$ being fixed, we now show that there exists a sufficiently small $\rho'>0$ such that 
\begin{equation}
    \label{eq:V:upper-bound}
    \PE_\star\lrb{\sup_{\theta\in \ball{\thv,\rho'}} \Vratio_{(\theta,\phiv)}(\x)} < \Vratio(\thv,\phiv)+\gamma/2.
\end{equation}
Indeed, when $\rho'$ decreases, the function 
$$
\rho'\mapsto g_{\rho'}(\x)=\sup_{\xi \in \Xi}\PE^\phi_\X\lrb{\lrcb{r_\xi^\x(\bz)}^2}- \sup_{\theta\in \ball{\thv,\rho'}} \Vratio_{(\theta,\phiv)}(\x)
$$
is non-negative and non-increasing. The monotone convergence theorem yields 
$$
\lim_{\rho'\searrow 0} \PE_\star[g_{\rho'}(\x)]= \PE_\star[\lim_{\rho'\searrow 0} g_{\rho'}(\x)]\eqsp.
$$
 Expanding the expression of $g$ and combining with \ref{hyp:moment}, we finally obtain 
$$ 
\lim_{\rho'\searrow 0}\PE_\star\lrb{\sup_{\theta\in \ball{\thv,\rho'}} \Vratio_{(\theta,\phiv)}(\x)}=\PE_\star\lrb{\lim_{\rho'\searrow 0} \sup_{\theta\in \ball{\thv,\rho'}} \Vratio_{(\theta,\phiv)}(\x)}  =\Vratio(\thv,\phiv),
$$
where the last equality follows from continuity assumption \ref{hyp:variational}-(ii). Hence, \eqref{eq:V:upper-bound} is shown. Then, for $\rho'$ chosen as in \eqref{eq:V:upper-bound}, we can apply again \Cref{lem:ell-D-lim}-\ref{item::ell-D-lim:two} and obtain  
$\PP_\star-\as$, 
\begin{equation}
    \label{eq:consist:var:three}
    \limsup_{n\wedge \lo \to \infty} \lr{\sup_{\theta \in \ball{\thv,\rho'}  } \lo D \nell(\theta,\phiv)} \leq \PE_\star\lrb{\sup_{\theta\in \ball{\thv,\rho'}} \Vratio_{(\theta,\phiv)}(\x)} < \Vratio(\thv,\phiv)+\gamma/2.
\end{equation}
Now, note that by \Cref{thm:consistency}, $\PP_\star-\as$, $\theta \nell \to \thv$ as $n\wedge \lo \to\infty$. Combining this with \eqref{eq:consist:var:two} and \eqref{eq:consist:var:three}, there exists a $\PP_\star-\as$ finite random integer $m$ such that for all $n\wedge k \geq m$,   
\begin{equation}
    \label{eq:consist:var:four}
    \t \theta \nell \in\ball{\thv,\check \rho \wedge \rho'}\,,\quad \mbox{and} \quad \sup_{\theta \in \ball{\thv,\rho'}  } \lo D \nell(\theta,\phiv) < \inf_{\xi \in \ball{\thv,\check \rho} \times \bar \Wset } \lo D \nell(\xi).
\end{equation}
We now show by contradiction that $\t \phi \nell\in\Wset$. Indeed assume that $\t \phi \nell\in \bar \Wset$, then by \eqref{eq:consist:var:four}, $\t \xi \nell=(\t \theta \nell,\t \phi \nell) \in \ball{\thv,\check \rho} \times \bar \Wset$, which in turn implies, via \eqref{eq:def:xi:n:ell}, 
\begin{align*}
    & 0\leq  \lo\lr{\lk  \nell(\t \xi \nell)-\lk  \nell(\tilde \theta \nell, \phiv)}=-\lo D \nell(\t \xi \nell)+\lo D \nell(\tilde \theta \nell, \phiv) \leq  -\inf_{\xi \in \ball{\thv,\check \rho} \times \bar \Wset } \lo D \nell(\xi) + \sup_{\theta \in \ball{\thv,\rho'}  } \lo D \nell(\theta,\phiv).
\end{align*}
This is in contradiction with \eqref{eq:consist:var:four}. Hence $\t \phi \nell\in\Wset$ as soon as $n\wedge \lo\geq m$ and the proof is completed. 

\end{proof}

\section{Proof of Theorem~\ref{thm:AS}}
\label{proof:thm:efficiency}
The two following propositions will be used in the proof of
\Cref{thm:AS} and proved afterwards. 

\begin{proposition}
    \label{prop:weakconv}
    Under the assumptions of \Cref{thm:AS}, we have 
    $$
    n^{-1/2}\nabla_\xi \lk \nelln (\xi^\star) \weakconv_{\PP_\star} \mathcal{N}\lr{0,\begin{pmatrix} 
  J_{1}&0\\
  0&0
  \end{pmatrix}}.
  $$
\end{proposition}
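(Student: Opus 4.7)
The plan is to decompose $n^{-1/2}\nabla_\xi \lk\nelln(\xi^\star)$ using \eqref{eq:def:L} as
\[
n^{-1/2}\sum_{i=1}^n \begin{pmatrix}\nabla_\theta \log p_{\thv}(\x_i) \\ 0\end{pmatrix} + n^{-1/2}\sum_{i=1}^n H_n(\x_i),
\]
where $H_n(\x) \eqdef \nabla_\xi \PE_\X[\log(\lo_n^{-1}\sum_\ell f_\xi(\epsilon_\ell|\x)/\nu(\epsilon_\ell))]|_{\xi = \xi^\star}$ collects the Jensen correction, and the $\phi$-component of the first sum vanishes identically since $\log p_\theta(\x_i)$ does not depend on $\phi$. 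The first-order optimality of $\thv$ from \ref{hyp:theta:star} together with the regularity provisions of \ref{hyp:as:zero} yields $\PE_\star[\nabla_\theta \log p_{\thv}(\x)] = 0$, so the standard multivariate CLT applied to the first sum already delivers the claimed Gaussian limit $\mathcal N(0,\mathrm{diag}(J_1,0))$. The entire content of the proposition thus reduces to showing $n^{-1/2}\sum_i H_n(\x_i) = o_{\PP_\star}(1)$.

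To analyze $H_n(\x)$, I would justify commuting $\nabla_\xi$ and $\PE_\X$ using the envelopes $M_{0,\xi^\star}$ and $N_{0,\xi^\star}$ of \ref{hyp:weak:one}(ii), obtaining $H_n(\x) = \PE_\X[\overline{\nabla W}/\bar W]|_{\xi^\star}$ with the reparameterized weights $W_\ell = f_\xi(\epsilon_\ell|\x)/\nu(\epsilon_\ell)$, $\bar W = \lo_n^{-1}\sum_\ell W_\ell$ and $\overline{\nabla W} = \lo_n^{-1}\sum_\ell \nabla_\xi W_\ell$. Next I would exploit the algebraic identity $1/\bar W = 1 - (\bar W - 1) + (\bar W - 1)^2/\bar W$ to write
\[
H_n(\x) = -\PE_\X[\overline{\nabla W}(\bar W - 1)] + \PE_\X[\overline{\nabla W}(\bar W - 1)^2/\bar W],
\]
using that $\PE_\X[\overline{\nabla W}] = \nabla_\xi \PE_\X[\bar W] = \nabla_\xi 1 = 0$. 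By independence of the $W_\ell$'s only diagonal terms survive in the double sum defining $\PE_\X[\overline{\nabla W}(\bar W - 1)]$, which reduces to $\lo_n^{-1}\PE_\X[\nabla_\xi W_1(W_1 - 1)] = (2\lo_n)^{-1}\nabla_\xi \Vratio_{\xi^\star}(\x)$, matching the $1/\lo$ correction in \eqref{devIWELBO}.

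The two pieces are then treated separately after normalization by $n^{-1/2}$ and summation over $i$. For the leading term $-(2\lo_n)^{-1}\sum_i \nabla_\xi \Vratio_{\xi^\star}(\x_i)$: in the $\phi$-block the LLN limit $\nabla_\phi \Vratio(\thv,\phi^\star)$ vanishes by the first-order condition \ref{hyp:variational}(i), so a CLT applied to $\nabla_\phi \Vratio_{\xi^\star}(\x_i)$ (under \ref{hyp:moment}) leaves residual fluctuations of size $O_{\PP_\star}(\lo_n^{-1}) = o_{\PP_\star}(1)$; in the $\theta$-block the limit $\PE_\star[\nabla_\theta \Vratio_{\xi^\star}(\x)]$ need not vanish, so the contribution is of order $n^{1/2}/\lo_n$, which tends to zero because $\lo_n/n^{\delta/2} \to \infty$ with $\delta > 1$ enforces $\lo_n \gg n^{1/2}$. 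For the residual $n^{-1/2}\sum_i \PE_\X[\overline{\nabla W}(\bar W - 1)^2/\bar W]$, I would use a Holder / Marcinkiewicz-Zygmund analysis, splitting on $\{\bar W \geq 1/2\}$ (where $1/\bar W \leq 2$ and standard moment bounds on $\bar W - 1$ and $\overline{\nabla W}$ give an $O(\lo_n^{-3/2})$ contribution) and on its complement (an event of probability $O(\lo_n^{-1})$, on which the conjugate exponents $(\alpha,\delta)$ from \ref{hyp:weak:one} allow a controlled bound via the envelopes $M_{2,\xi^\star}$, $N_{2,\xi^\star}$). The combined contribution is $o_{\PP_\star}(1)$ exactly when $\lo_n \gg n^{\delta/2}$.

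The main obstacle is this last step: producing a bound on $\PE_\X[\overline{\nabla W}(\bar W - 1)^2/\bar W]$ tight enough to survive the $n^{-1/2}\sum_i$ aggregation while accommodating the potential smallness of $\bar W$. The mixed Orlicz-type norms $M_{i,\xi}$, $N_{i,\xi}$ with exponents $\alpha > 2$ and $\delta = \alpha/(\alpha - 1) \in (1,2)$ are designed precisely to make this estimate work, and the rate $\lo_n \gg n^{\delta/2}$ arises as the common threshold above which both the $\theta$-block of the leading term and the nonlinear remainder become negligible at the $\sqrt n$ scale.
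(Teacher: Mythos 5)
Your overall architecture coincides with the paper's: differentiate \eqref{eq:def:L}, apply the CLT to the score $n^{-1/2}\sum_i\nabla_\theta\log p_{\thv}(\x_i)$ (whose $\phi$-block vanishes identically), and reduce the proposition to showing that the gradient of the Jensen gap, $A_n=n^{-1/2}\sum_i\nabla_\xi\PE_\X[\log\bar W(\x_i)]$, is $o_{\PP_\star}(1)$. Where you diverge is in the treatment of $A_n$. The paper does \emph{not} perform a second-order expansion of $1/\bar W$: it writes each summand as the self-normalized quantity $\PE_\X\lrb{\sum_\ell\nabla_\xi W_\ell/\sum_\ell W_\ell}$ and bounds its norm in $\lone$ via \Cref{lem:borne:autonorm}. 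The key devices there are (a) the identity $W_\lo=(1-\bar U)W_\lo+\bar V$ combined with $\PE[\bar V]=0$ and H\"older, giving $|\PE[W_\lo]|\leq\norms{1-\bar U}{\alpha}\norms{W_\lo}{\delta}$, and (b) the convex-combination inequality $|W_\lo|^\alpha\leq\sum_\ell|V_\ell/U_\ell|^\alpha$, which controls the self-normalized ratio \emph{without ever lower-bounding the denominator}. This yields $\PE\norm{A_n}\lesssim n^{1/2}\lo_n^{-1/\delta}\PE_\star[M_{0,\xi^\star}]+n^{1/2}\lo_n^{-1}\PE_\star[N_{0,\xi^\star}]\to 0$, and Markov concludes.

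The genuine gap in your route is the remainder $\PE_\X[\overline{\nabla W}(\bar W-1)^2/\bar W]$. Your truncation on $\{\bar W<1/2\}$ does not close under the stated hypotheses: \ref{hyp:weak:one} provides no negative moments of $f_\xi(\epsilon|\x)/\nu(\epsilon)$, so on the bad event you have no handle on $1/\bar W$ beyond the event's probability, and a small-probability event multiplied by an unintegrable quantity is not automatically negligible. The envelopes $M_{2,\xi}$, $N_{2,\xi}$ you invoke are moreover deployed in the paper only for \Cref{prop:second:derivative} (the Hessian), and they bound $\norms{\nabla_\xi\log f_\xi}{\alpha}$ and $\norms{\nabla_\xi f_\xi/\nu}{2}$, not anything resembling $\PE[\bar W^{-1}]$. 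The self-normalization trick is precisely what makes this issue disappear, and it is the idea your proposal is missing. Two smaller corrections: your identification of the leading bias $(2\lo_n)^{-1}\nabla_\xi\Vratio_{\xi^\star}(\x)$ is correct but unnecessary (the paper's coarser $O(\lo_n^{-1/\delta})$ bound on the whole of $A_n$ suffices, and neither the first-order condition \ref{hyp:variational}(i) nor a CLT for $\nabla_\phi\Vratio_{\xi^\star}(\x_i)$ is needed); and the threshold $\lo_n\gg n^{\delta/2}$ does not arise from that leading term (which only requires $\lo_n\gg n^{1/2}$) but from the $\lo_n^{1/\alpha}$ loss in the crude bound $\norms{W_\lo}{\alpha}\leq\lo^{1/\alpha}\norms{V_1/U_1}{\alpha}$ attached to the $M_{0,\xi^\star}$ envelope.
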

  
\begin{proposition} \label{prop:second:derivative}
    Under the assumptions of \Cref{thm:AS}, we have  
    $$ 
     \sup_{\xi=(\theta,\phi) \in \Xi} \norm{n^{-1}\nabla^2_\xi \lk_n^{\lo_n}(\xi) - \begin{pmatrix}
        n^{-1}\sum_{i=1}^n \nabla^2_\theta \log p_\theta(\x_i)&0\\
        0&0 \end{pmatrix}       } \convprob{\PP_\star}0.
    $$
  \end{proposition}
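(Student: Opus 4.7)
Starting from the reparameterized expression~\eqref{eq:def:L}, one has
\[
    \lk \nelln(\xi) = \sum_{i=1}^n \log p_\theta(\x_i) + \sum_{i=1}^n \PE_\X\lrb{\log \bar W_i(\xi)}, \qquad \bar W_i(\xi) \eqdef \frac{1}{\lo_n}\sum_{\ell=1}^{\lo_n} \frac{f_\xi(\epsilon_\ell^i|\x_i)}{\nu(\epsilon_\ell^i)}.
\]
Since $\log p_\theta(\x_i)$ has no $\phi$-dependence, the Hessian of the first sum is precisely the block matrix appearing in the proposition, so the task reduces to proving
\[
    \sup_{\xi \in \Xi} \norm{R_n^{\lo_n}(\xi)} \convprob{\PP_\star} 0, \qquad R_n^{\lo_n}(\xi) \eqdef n^{-1}\sum_{i=1}^n \nabla_\xi^2 \PE_\X\lrb{\log \bar W_i(\xi)}.
\]

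\textbf{Structural identity.} The change-of-variable identity $q_\phi(g_\phi(\x,\epsilon)|\x)|\mathrm{Det}\,\partial_\epsilon g_\phi(\x,\epsilon)|=\nu(\epsilon)$ yields $\PE_\X[\bar W_i(\xi)]=1$ for every $\xi$; under the domination provided by the envelopes $\sup_\xi M_{1,\xi}$, $\sup_\xi N_{1,\xi}$, $\sup_\xi M_{2,\xi}^2$, $\sup_\xi N_{2,\xi}^2$ in~\ref{hyp:weak:one}-(iii)-(iv), derivatives may be exchanged with $\PE_\X$, so $\PE_\X[\nabla_\xi \bar W_i]=0$ and $\PE_\X[\nabla_\xi^2 \bar W_i]=0$, and the standard identity for the Hessian of $\log$ gives
\[
    \nabla_\xi^2 \PE_\X\lrb{\log \bar W_i(\xi)} = \PE_\X\lrb{\frac{\nabla_\xi^2 \bar W_i}{\bar W_i} - \frac{(\nabla_\xi \bar W_i)(\nabla_\xi \bar W_i)^T}{\bar W_i^2}}.
\]

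\textbf{Decay with $\lo_n$.} I would decompose $\nabla_\xi^2 \bar W_i/\bar W_i = \nabla_\xi^2 \bar W_i + (1-\bar W_i)\,\nabla_\xi^2 \bar W_i/\bar W_i$: the first summand vanishes in $\PE_\X$ expectation by the previous step, while the second is treated via Hölder's inequality with exponents $(\alpha,\delta)$ satisfying $\alpha^{-1}+\delta^{-1}=1$ chosen as in~\ref{hyp:weak:one}: Marcinkiewicz-Zygmund on the centered IS average $\bar W_i-1$ yields $\normsxsimp{\bar W_i-1}{2\delta}=O(\lo_n^{-1/2})$, while the $L^\alpha$-moment of $\nabla_\xi^2 \bar W_i/\bar W_i$ is controlled by $M_{1,\xi}(\x_i)+N_{1,\xi}(\x_i)$. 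For the quadratic term, $\PE_\X[\|\nabla_\xi \bar W_i\|^2]=O(\lo_n^{-1})$ by Marcinkiewicz-Zygmund applied to the zero-mean IS average $\nabla_\xi \bar W_i$, and the $1/\bar W_i^2$ factor is absorbed by a further Hölder split using $M_{2,\xi}^2,N_{2,\xi}^2$. Combining uniformly in $\xi$,
\[
    \sup_{\xi \in \Xi} \norm{\nabla_\xi^2 \PE_\X\lrb{\log \bar W_i(\xi)}} \le \lo_n^{-a}\,\Psi(\x_i),
\]
for some $a>0$ and some measurable envelope $\Psi$ satisfying $\PE_\star[\Psi(\x)]<\infty$ thanks to the $\PE_\star$-integrability statements in~\ref{hyp:weak:one}.

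\textbf{Conclusion and main obstacle.} Summing then taking $\PE_\star$ gives $\PE_\star[\sup_\xi \|R_n^{\lo_n}(\xi)\|]\le \lo_n^{-a}\PE_\star[\Psi(\x)]\to 0$, so Markov's inequality delivers the claim. The technical core is the third step: since $\bar W_i$ is only positive, $1/\bar W_i$ need not admit any integrable negative moments, and the estimate hinges on pairing the $O(\lo_n^{-1/2})$ IS-error terms with the $L^\alpha$-bounded derivative envelopes through the exact Hölder split dictated by $\alpha^{-1}+\delta^{-1}=1$. The very form of the envelopes $M_{j,\xi}, N_{j,\xi}$ in Assumption~\ref{hyp:weak:one}, with their $(1+\normsxsimp{f_\xi/\nu}{2\delta})$ prefactors, is tailored to make this bookkeeping go through, and assembling the pieces into a single envelope $\Psi$ will rest on repeated applications of Cauchy-Schwarz together with the dominated convergence justification of step two.
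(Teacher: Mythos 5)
Your route is essentially the paper's: the same reduction to the remainder $R_n^{k_n}$, the same Hessian-of-log identity after exchanging $\nabla_\xi^2$ with $\PE_\X$, the same self-normalization decomposition $W=(1-\bar a)W+k_n^{-1}\sum_\ell(\cdot)$ with the centered derivative averages killed by $\PE_\X$, and the same Marcinkiewicz--Zygmund/H\"older bookkeeping with the envelopes of \ref{hyp:weak:one}. The paper packages these estimates into \Cref{lem:borne:autonorm} and concludes via the strong law of large numbers applied to $n^{-1}\sum_i\sup_\xi M_{j,\xi}(\x_i)$, etc., rather than your expectation-plus-Markov step; that difference is immaterial.

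The one place where your sketch is not yet a proof is the moment control of the self-normalized ratios. As you yourself observe, $\bar W_i$ has no integrable negative moments, so neither the claim that the $L^\alpha$-moment of $\nabla_\xi^2\bar W_i/\bar W_i$ ``is controlled by $M_{1,\xi}+N_{1,\xi}$'' nor the statement that the $1/\bar W_i^2$ factor in the outer-product term ``is absorbed by a further H\"older split'' can be taken literally: any direct H\"older split against $\bar W_i^{-1}$ or $\bar W_i^{-2}$ is unavailable. The paper's resolution is the elementary termwise bound of \Cref{lem:borne:autonorm}-(i): from $\bigl|\sum_\ell V_\ell/\sum_\ell U_\ell\bigr| \le \sum_\ell \bigl(U_\ell/\sum_j U_j\bigr)\,|V_\ell/U_\ell|$ and Jensen's inequality one gets $\norms{\sum_\ell V_\ell/\sum_\ell U_\ell}{\alpha}\le k_n^{1/\alpha}\norms{V_1/U_1}{\alpha}$, applied with $U=f_\xi/\nu$ and $V=\nabla_\xi^2 f_\xi/\nu$ or $\nabla_\xi f_\xi/\nu$. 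Note that this bound grows like $k_n^{1/\alpha}$ --- it is not uniform in $k_n$ --- and it is precisely this loss, offset by the $O(k_n^{-1/2})$ Marcinkiewicz--Zygmund factors on $1-\bar a$ and on the centered averages, that produces the rates $k_n^{-1/\delta}$, $k_n^{-1}$ and $k_n^{-1+2/\alpha}$ in the paper's final display and explains why $\alpha>2$ is needed. Once you replace the asserted ``absorption'' by this termwise bound (i.e.\ by \Cref{lem:borne:autonorm}-(i)--(iii), with the outer-product term handled by applying item (ii) with $\beta=2$ and then squaring), your argument coincides with the paper's.
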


\begin{proof}[(Proof of \Cref{thm:AS})]
Since $\nabla_\xi \lk \nelln (\t \xi \nelln)=0$, we have 
\begin{align*}
    -n^{-1/2}\nabla_\xi \lk \nelln (\xi^\star)&=n^{-1/2}\nabla_\xi \lk \nelln (\t \xi \nelln) -n^{-1/2}\nabla_\xi \lk \nelln (\xi^\star) \\
& =U_n \lrb{n^{1/2}(\xi \nelln-\xi^\star)} ,
\end{align*}
where $U_n=n^{-1}\int_0^1  \nabla_\xi^2 \lk \nelln (t \t \xi \nelln+(1-t) \xi^\star)\rmd t$ 
and the proof is completed by applying Slutsky's lemma, provided we show that 
\begin{align}
   & n^{-1/2}\nabla_\xi \lk \nelln (\xi^\star) \weakconv_{\PP_\star} \mathcal{N}\lr{0,\begin{pmatrix} 
        J_{1}&0\\
        0&0
        \end{pmatrix}}, \label{eq:as:weakconv:one} \\
    &  U_n \convprob{\PP_\star} \begin{pmatrix} 
            J_{2}&0\\
            0&0
            \end{pmatrix}. \label{eq:as:weakconv:two}
\end{align}
Applying \Cref{prop:weakconv}, we get \eqref{eq:as:weakconv:one}. We now turn to the proof of \eqref{eq:as:weakconv:two}. To this aim, let $\gamma,\epsilon>0$.  
Under \ref{hyp:as:zero}, the dominated convergence theorem shows that 
$$ 
\lim_{\rho \to 0}\PE \lrb{ \sup_{\theta \in \ball{\thv,\rho}} \norm{\nabla^2_\theta \log p_\theta(\x_i)-\nabla^2_\theta \log p_\thv(\x_i)} } = 0.
$$ 
Therefore, there exists $\rho>0$ small enough so that 
\begin{equation} \label{eq:controle:moment}
    \PE \lrb{ \sup_{\theta \in \ball{\thv,\rho}} \norm{\nabla^2_\theta \log p_\theta(\x_i)-\nabla^2_\theta \log p_\thv(\x_i)} } \leq  \epsilon \gamma  / 3.
\end{equation}
This $ \rho $ being chosen, \ref{hyp:consist} implies that there exists $n_0 \in \nset$ such that for any $n\geq n_0$, 
\begin{equation*} 
    \PP_\star \lr {\t \xi \nelln \notin \tball{\xiv,\rho}} \leq \delta,
\end{equation*} 
where  $\tball{\xi,\rho}=\ball{\theta,\rho} \times \ball{\phi,\rho}$. Now, by the triangular inequality, 
\begin{align*}
    &\PP_\star\lr{ \norm{U_n-\begin{pmatrix}
     J_{2}&0\\
    0&0 \end{pmatrix}  } \geq \epsilon\, , \quad \t \xi \nelln \in \tball{\xiv,\rho}} \\
    &\quad \leq 
     \PP_\star\lr{\sup_{\xi=(\theta,\phi) \in \Xi} \norm{n^{-1}\nabla^2_\xi \lk_n^{\lo_n}(\xi) - \begin{pmatrix}
        n^{-1}\sum_{i=1}^n \nabla^2_\theta \log p_\theta(\x_i)&0\\
        0&0 \end{pmatrix}       } \geq \epsilon/3}\\
        & \quad \quad + \PP_\star\lr{  n^{-1}\sum_{i=1}^n \sup_{\theta \in \ball{\thv,\rho}} \norm{\nabla^2_\theta \log p_\theta(\x_i)-\nabla^2_\theta \log p_\thv(\x_i)} \geq \epsilon/3} \\
        & \quad \quad \quad + \PP_\star\lr{   \norm{ n^{-1}\sum_{i=1}^n \nabla^2_\theta \log p_\thv(\x_i)-\PE_\star\lrb{\nabla^2_\theta \log p_\thv(\x)}} \geq \epsilon/3}.
\end{align*}
The second term of the rhs can be bounded by $\delta$ for any $n\geq 1$ by Markov's inequality combined with \eqref{eq:controle:moment}. Then, applying \Cref{prop:second:derivative} for the first term of the rhs and the law of large numbers under \ref{hyp:as:zero} for the third term of the rhs, there exists $n_1 \geq n_0$ such that the first and the last terms of the rhs are both less than $\delta$ for any $n \geq n_1$. Finally, for all $n \geq n_1$, we get 
\begin{align*}
    \PP_\star\lr{ \norm{U_n-\begin{pmatrix}
        J_{2}&0\\
        0&0 \end{pmatrix}  } \geq \epsilon} &\leq \PP_\star \lr {\t \xi \nelln \notin \tball{\xiv,\rho}} + \PP_\star\lr{ \norm{U_n-\begin{pmatrix}
            J_{2}&0\\
            0&0 \end{pmatrix}  } \geq \epsilon\, , \quad \t \xi \nelln \in \tball{\xiv,\rho}}\\
            & \leq 4\delta.
\end{align*}
and the proof is completed. 
\end{proof}

\begin{proof}[(Proof of \Cref{prop:weakconv})]
    Differentiating \eqref{eq:def:L}, we get 
    \begin{align*}
      n^{-1/2}\nabla_\xi \lk \nelln (\xi^\star)=n^{-1/2}\sum_{i=1}^n \nabla_\xi \log p_\thv(\x_i)+ A_n \quad \mbox{where} \quad A_n=n^{-1/2} \sum_{i=1}^n \nabla_\xi \PE_\X \lrb{  \log \frac 1 {\lo_n} \sum_{\ell=1}^{\lo_n} \frac{f_\xiv(\epsilon_\ell|\x_i)}{\nu(\epsilon_\ell)}}.
  \end{align*}
  Under \ref{hyp:as:zero}, the Central Limit Theorem holds and  
  $$ 
  n^{-1/2}\sum_{i=1}^n \nabla_\xi \log p_\thv(\x_i)=\begin{pmatrix} 
      n^{-1/2}\sum_{i=1}^n \nabla_\theta \log p_\thv(\x_i)\\
      0
  \end{pmatrix}  \weakconv \begin{pmatrix} 
  \mathcal{N}(0,J_{1})\\
  0
  \end{pmatrix}.
  $$
  To complete the proof, we must show $A_n \convprob{\PP}0$ as $n  \to \infty$. To this aim, we will prove that $\PE\lrb{\norm{A_n}} \rightarrow 0$ and then apply the Markov inequality to get $A_n \convprob{\PP} 0$ as $n  \to \infty$. Setting $B  \nelln =n^{-1/2}A_n$, we have: 
  $$ 
    B_n= \frac 1 n \sum_{i=1}^n \PE_\X \lrb{ C_n(\beps,\x_i)},
  $$
  where 
  \begin{align*}
      & C_n(\beps,\x)=\frac{\sum_{\ell=1}^{\lo_n} \varphi(\epsilon_\ell,\x)}{\sum_{\ell=1}^{\lo_n} a(\epsilon_\ell,\x)} \,, \quad a(\epsilon,\x)=\frac{f_\xiv(\epsilon|\x)}{\nu(\epsilon)}\,, \quad \varphi(\epsilon,\x)=\frac{\nabla_\xi f_\xiv(\epsilon|\x)} {\nu(\epsilon)}.
  \end{align*}
  Since 
  \begin{equation} \label{eq:boundB}
      \PE\lrb{\norm{B_n}} \leq D_n\eqdef \PE_\star \lrb{\norm{\PE_\X \lrb{C_n(\beps,\x)}}},
  \end{equation}
  we only need to bound the rhs $D_n$ (which only depends on $n$
  through $\lo_n$). Applying \Cref{lem:borne:autonorm}, we obtain: for every $\alpha>2$,  there exists a constant $M$ such that 
      \begin{equation*}
      \norm{\PE_\X[C_{k_n}(\beps,\x)]} \leq   M k_n^{-1}\lrb{ k_n^{1/\alpha} M_{0,\xi^\star}(\x) + N_{0,\xi^\star}(\x)}.
  \end{equation*}
  Finally, recalling \eqref{eq:boundB} and $\delta=\alpha/(\alpha-1)$, there exists a finite constant $M$ such that for any $n$,  
  \begin{align} \label{eq:A}
      \PE\lrb{\norm{A_n}}= n^{1/2} \PE\lrb{\norm{B_n}}
      &\leq n^{1/2} \PE_\star \lrb{\norm{\PE_\X \lrb{C_n(\beps,\x)}}} \\
      &\leq M n^{1/2} {\lo_n}^{-1/\delta} \PE_\star\lrb{M_{0,\xi^\star}(\x)}+ M  n^{1/2} \lo_n^{-1} \PE_\star \lrb{N_{0,\xi^\star}
  (\x)} 
    \end{align}
  under \ref{hyp:weak:one}. Note that since $\lim_{n \to \infty}{\lo_n}/n^{\delta/2}=\infty$, the rhs of \eqref{eq:A} tends to $0$. Then, $A_n \convprob{\PP}0$ and the proof is completed. 
    
\end{proof}

\begin{proof}[(Proof of \Cref{prop:second:derivative})]
Setting 
\begin{align*}
    B_n(\xi)&\eqdef n^{-1}\sum_{i=1}^n \nabla^2_\xi \PE_\X \lrb{\log
    \lr{\frac 1 {\lo_n} \sum_{\ell=1}^{\lo_n}
    \frac{f_\xi(\epsilon_\ell|\x_i)}{\nu(\epsilon_\ell)}}}=n^{-1}\sum_{i=1}^n
    \PE_\X \lrb{\nabla^2_\xi \log \lr{\frac 1 {\lo_n} \sum_{\ell=1}^{\lo_n} \frac{f_\xi(\epsilon_\ell|\x_i)}{\nu(\epsilon_\ell)}}}, 
    \end{align*}
we can differentiate twice \eqref{eq:def:L} and obtain  
    \begin{align*}
       n^{-1} \nabla_\xi^2 \lk_n^{\lo_n}(\xi)&= n^{-1} \sum_{i=1}^n \nabla_\xi^2 \log p_\theta(\x_i)+ B_n(\xi)\\
       &=\begin{pmatrix}
        n^{-1}\sum_{i=1}^n \nabla^2_\theta \log p_\theta(\x_i)&0\\
        0&0 \end{pmatrix} + B_n(\xi).
    \end{align*}
Then,  the proof is completed provided we show
\begin{equation} \label{eq:convUnifB}
    \sup_{\xi \in \Xi} \norm{B_n(\xi)} \convprob{\PP_\star} 0.
\end{equation}
The rest of the proof is devoted to establishing \eqref{eq:convUnifB}. Rewrite $B_n(\xi)$ as 
$$ 
B_n(\xi)=n^{-1} \sum_{i=1}^n \PE_{\X}\lrb{\frac{\sum_{\ell=1}^{\lo_n} \psi_\xi(\epsilon_\ell,\x_i)}{\sum_{\ell=1}^{\lo_n} a_\xi(\epsilon_\ell,\x_i)}} + n^{-1} \sum_{i=1}^n \PE_{\X}\lrb{\lr{\frac{\sum_{\ell=1}^{\lo_n} \varphi_\xi(\epsilon_\ell,\x_i)}{\sum_{\ell=1}^{\lo_n} a_\xi(\epsilon_\ell,\x_i)}} \lr{\frac{\sum_{\ell=1}^{\lo_n} \varphi_\xi(\epsilon_\ell,\x_i)}{\sum_{\ell=1}^{\lo_n} a_\xi(\epsilon_\ell,\x_i)}}^T},
$$ 
where $\psi_\xi(\epsilon,\x_i)= \frac{\nabla_\xi^2 f_\xi(\epsilon|\x_i)}{\nu(\epsilon)}$, $a_\xi(\epsilon,\x_i)= \frac{ f_\xi(\epsilon|\x_i)}{\nu(\epsilon)}$,  and $\varphi_\xi(\epsilon,\x_i)= \frac{\nabla_\xi f_\xi(\epsilon|\x_i)}{\nu(\epsilon)}$. Then, using Holder's inequality, 
\begin{equation} \label{eq:holder}
\norm{B_n(\xi)} \leq n^{-1} \sum_{i=1}^n \norm{\PE_{\X}\lrb{\frac{\sum_{\ell=1}^{\lo_n} \psi_\xi(\epsilon_\ell,\x_i)}{\sum_{\ell=1}^{\lo_n} a_\xi(\epsilon_\ell,\x_i)}}} + n^{-1} \sum_{i=1}^n 
\PE_{\X}\lrb{\norm{\frac{\sum_{\ell=1}^{\lo_n} \varphi_\xi(\epsilon_\ell,\x_i)}{\sum_{\ell=1}^{\lo_n} a_\xi(\epsilon_\ell,\x_i)}}^2 }.
\end{equation}
Since $\PE_{\X}[\psi_\xi(\epsilon,\x)]=\PE_{\X}[\varphi_\xi(\epsilon,\x)]=0$ and $\PE_{\X}[a_\xi(\epsilon,\x)]=1$, we can bound the two terms of the rhs by applying first \Cref{lem:borne:autonorm}-\ref{item:autonorm:three} and then \Cref{lem:borne:autonorm}-\ref{item:autonorm:two} and there exists a constant $M$ such that  
\begin{align*}
    \sup_{\xi \in\Xi} \norm{B_n(\xi)} &\leq M k_n^{-1/\delta} \lr{ n^{-1} \sum_{i=1}^n \sup_{\xi \in\Xi} M_{1,\xi}(\x_i)}+M k_n^{-1} \lr{ n^{-1} \sum_{i=1}^n \sup_{\xi \in\Xi} N_{1,\xi}(\x_i)}\\
& \quad + M k_n^{-1+2/\alpha}    \lr{n^{-1} \sum_{i=1}^n\sup_{\xi \in \Xi} M^2_{2,\xi}(\x_i)}+ M k_n^{-1}    \lr{n^{-1} \sum_{i=1}^n\sup_{\xi \in \Xi} N^2_{2,\xi}(\x_i)}.
\end{align*}
Using $\alpha>2$ and $\delta>0$, and the strong law of large numbers which holds under \ref{hyp:weak:one}, the rhs tends to $0$ $\PP_\star-a.s.$ This implies \eqref{eq:convUnifB} and the proof is completed. 
\end{proof}
\begin{remark}
    Actually the proof of \eqref{eq:holder} is not immediate since one term is scalar and the other one is a matrix. To be specific, let $U$ be random variable  and $V$ be a real valued (possibly non squared) matrix. Denote $\|\cdot\|$ any norm on the matrices. Then consider the random variables $X,Y$ (they are therefore real valued) defined by 
    $$
    X=\frac{|U|^\gamma}{\PE^{1/s}[|U|^{\gamma s}]} \quad \mbox{and} \quad Y=\frac{\|V\|^\gamma}{\PE^{1/t}[\|V\|^{\gamma t}]}.
    $$
    Then, 
    $$ 
    \frac{\|U V\|^\gamma}{\PE^{1/s}[|U|^{\gamma s}] \times \PE^{1/t}[\|V\|^{\gamma t}]}=XY \leq \frac 1 s X^s + \frac 1 t Y^t = \frac 1 s \frac{|U|^{\gamma s}}{\PE[|U|^{\gamma s}]} + \frac 1 t \frac{\|V\|^{\gamma t}}{\PE[\|V\|^{\gamma t}]}.
    $$
    Taking the expectation and raising to the power $1/\gamma$ yields: 
    $$ 
     \frac{ \PE^{1/\gamma} \lrb{\|U V\|^\gamma}}{\PE^{1/\gamma s}[|U|^{\gamma s}] \times \PE^{1/\gamma t}[\|V\|^{\gamma t}]} \leq \frac 1 s + \frac 1 t =1,
    $$
    which completes the proof. 
\end{remark}

\section{Technical results}

\begin{lemma} \label{lem:consist:log}
    Let $\seq{U_k}{k\in\nset}$ be sequence of iid positive random variables on a common probability space $(\Omega,\mcf,\PP)$. Denote by $\PE$ the associated expectation operator. Assume that $\PE[U_1]=1$ and $\PE[|\log U_1|]<\infty$. Then, 
    $$ 
    \lim_{m \to \infty} \PE\lrb{\log \lr{m^{-1} \sum_{\ell=1}^m  U_k }}=0\,,\quad \PP-\as
    $$
\end{lemma}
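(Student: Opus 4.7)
The statement is really that the deterministic sequence $\PE[\log(m^{-1}\sum_{\ell=1}^m U_\ell)]$ tends to $0$; the \as is spurious (or inherited from the conditional expectation context in which the lemma is applied). So the task is a convergence-of-expectations statement, and the plan has two ingredients: almost-sure convergence of the integrand, and a domination argument good enough to push the limit inside the expectation.

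First, by the strong law of large numbers applied to the iid sequence $(U_\ell)$ with $\PE[U_1]=1$, one has $m^{-1}\sum_{\ell=1}^m U_\ell \to 1$ $\PP$-a.s., and by continuity of $\log$ at $1$,
$$
\log\!\left(m^{-1}\sum_{\ell=1}^m U_\ell\right) \xrightarrow[m\to\infty]{} 0 \qquad \PP\text{-a.s.}
$$
So the only thing to show is that this convergence transfers to $L^1(\PP)$.

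For the upper tail, I would use the elementary inequality $\log^+ x \leq x$ for $x>0$, which gives
$$
\log^+\!\left(m^{-1}\sum_{\ell=1}^m U_\ell\right) \leq m^{-1}\sum_{\ell=1}^m U_\ell.
$$
For the lower tail, the concavity of $\log$ (Jensen's inequality) gives
$$
-\log\!\left(m^{-1}\sum_{\ell=1}^m U_\ell\right) \leq -m^{-1}\sum_{\ell=1}^m \log U_\ell,
$$
so that $\log^-(m^{-1}\sum_{\ell=1}^m U_\ell) \leq m^{-1}\sum_{\ell=1}^m |\log U_\ell|$. Setting
$$
Y_m \eqdef m^{-1}\sum_{\ell=1}^m U_\ell + m^{-1}\sum_{\ell=1}^m |\log U_\ell|,
$$
one obtains the pointwise bound $|\log(m^{-1}\sum_{\ell=1}^m U_\ell)| \leq Y_m$.

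The main (mild) obstacle is that $Y_m$ depends on $m$, so ordinary dominated convergence does not apply directly. The fix is the generalized dominated convergence theorem (or equivalently uniform integrability): under the assumptions $\PE[U_1]=1$ and $\PE[|\log U_1|]<\infty$, the strong law gives $Y_m \to Y_\infty \eqdef 1+\PE[|\log U_1|]$ both $\PP$-a.s.\ and in $L^1(\PP)$ (the $L^1$ convergence follows from the iid SLLN for integrable summands). Combining the a.s.\ convergence of $\log(m^{-1}\sum_{\ell=1}^m U_\ell)$ to $0$, the $\PP$-a.s.\ bound by $Y_m$, and the $L^1$ convergence of $Y_m$ to an integrable limit, the generalized DCT yields
$$
\PE\!\left[\log\!\left(m^{-1}\sum_{\ell=1}^m U_\ell\right)\right] \xrightarrow[m\to\infty]{} 0,
$$
which is the desired conclusion.
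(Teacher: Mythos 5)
Your proof is correct and follows essentially the same route as the paper: Jensen's inequality for the lower control, the linear bound on $\log$ for the upper control, the strong law of large numbers, and the generalized dominated convergence theorem. The only cosmetic difference is that you fold the two controls into a single symmetric dominating sequence $Y_m$ (the Royden form of the generalized DCT), whereas the paper keeps the two-sided sandwich $A_m\leq B_m\leq C_m$ with distinct upper and lower bounds --- a variant the paper itself notes is interchangeable.
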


\begin{proof}
    Setting 
    \begin{equation*}
        A_m \eqdef m^{-1} \sum_{\ell=1}^m \log U_k, \quad B_m \eqdef \log \lr{m^{-1} \sum_{\ell=1}^m  U_k }\,,\quad C_m \eqdef m^{-1} \sum_{\ell=1}^m U_k-1\eqsp,
    \end{equation*}   
    we have $A_m\leq B_m \leq C_m$ where the first inequality follows from Jensen's inequality and the second one from $\log(u) \leq u-1$. Since $\PE[|\log U_1|]<\infty$ and $\PE[|U_1|]<\infty$, the strong Law of Large Numbers applies to $\seq{A_m}{m\in \nset}$ and $\seq{B_m}{m\in \nset}$ and hence, 
\begin{align*}
    & \lim_{m \to \infty}\PE[A_m]=\PE[ \lim_{m \to \infty} A_m],\\
    & \lim_{m \to \infty}\PE[C_m]=\PE[ \lim_{m \to \infty} C_m].
\end{align*}
The general dominated convergence theorem then yields
$$ 
\lim_{m \to \infty}\PE[B_m]=\PE[ \lim_{m \to \infty} B_m] =\PE\lrb{\log 1 }=0.
$$
\end{proof}

\begin{remark}
    The general dominated convergence theorem can be found in Supplement C.2, Lemma 15 of \cite{daudelDoucPortier2020} - where it is stated with different upper and lower bounds having a common limit - or in Theorem 19 page 89 of \cite{roydenRA}, with symmetric bounds.
\end{remark}

In what follows, for any $s>0$ and any $\rset^\j$-valued random vector $V$ on a probability space $(\Omega,\mcf,\PP)$, with associated expectation operator $\PE$, we write $\norms{W}{s}=\lr{\PE[|W|^s]}^{1/s}$ where $|\cdot|$ is any norm on $\rset^\j$. 
\begin{lemma}[Marcinkiewicz-Zygmund inequality]
\label{lem:marcinZygm}
    For any $s\geq 2$, there exists a constant $M_s$ such that for any sequence of i.i.d. $\rset^\j$-valued random vectors $\seq{V_k}{k\in\nset}$ defined on a common probability space $(\Omega,\mcf,\PP)$, we have  
$$ 
 \norms{\lo^{-1}\sum_{\ell=1}^\lo V_i-\PE[V_i]}{s} \leq M_s \lo^{-1/2} \norms{V-\PE[V]}{s} \leq 2 M_s \lo^{-1/2} \norms{V}{s}.
$$
\end{lemma}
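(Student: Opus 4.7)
The plan is to prove the two inequalities separately, with the second one being essentially immediate and the first one following from the classical Burkholder/Marcinkiewicz--Zygmund moment inequality for sums of i.i.d.\ centered random vectors.

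First, I would dispatch the easy second inequality $\norms{V-\PE[V]}{s} \leq 2 \norms{V}{s}$. This follows from Minkowski's inequality applied to $V$ and the constant $-\PE[V]$: $\norms{V-\PE[V]}{s} \leq \norms{V}{s}+\lrav{\PE[V]}$. Then Jensen's inequality gives $\lrav{\PE[V]} \leq \PE[\lrav{V}] \leq \norms{V}{s}$, so the bound by $2\norms{V}{s}$ is immediate.

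The main work is the first inequality. Setting $Y_\ell \eqdef V_\ell - \PE[V_\ell]$, the sequence $\seq{Y_\ell}{\ell\in\nset}$ is i.i.d., centered and satisfies $\norms{Y_\ell}{s}=\norms{V-\PE[V]}{s}$. The partial sums $S_\lo\eqdef \sum_{\ell=1}^\lo Y_\ell$ form an $\rset^\j$-valued martingale with respect to the natural filtration, with increments $Y_\ell$. For $s=2$, orthogonality of the $Y_\ell$'s gives $\PE[|S_\lo|^2]=\lo \PE[|Y_1|^2]$ directly, yielding the bound with constant $M_2=1$. For $s>2$, I would invoke the Burkholder inequality for Hilbert-space valued martingales (equivalently the vector-valued Marcinkiewicz--Zygmund inequality): there is a constant $c_s$ depending only on $s$ (not on $\j$, if one uses the Hilbert-space form) such that
$$
\PE\lrb{|S_\lo|^s} \leq c_s \PE\lrb{\lr{\sum_{\ell=1}^\lo |Y_\ell|^2}^{s/2}}.
$$
Since $s/2\geq 1$, Jensen's inequality applied to the convex function $u \mapsto u^{s/2}$ gives
$$
\lr{\sum_{\ell=1}^\lo |Y_\ell|^2}^{s/2}= \lo^{s/2} \lr{\lo^{-1}\sum_{\ell=1}^\lo |Y_\ell|^2}^{s/2} \leq \lo^{s/2-1} \sum_{\ell=1}^\lo |Y_\ell|^s,
$$
and taking expectation (using that the $Y_\ell$'s are identically distributed) yields $\PE[|S_\lo|^s] \leq c_s \lo^{s/2} \PE[|Y_1|^s]$. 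Dividing by $\lo^s$ and taking the $s$-th root gives $\norms{\lo^{-1} S_\lo}{s} \leq c_s^{1/s} \lo^{-1/2} \norms{Y_1}{s}$, which is exactly the claimed bound with $M_s=c_s^{1/s}$.

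The only subtle point is the choice of Burkholder-type inequality in the vector-valued setting: the scalar Marcinkiewicz--Zygmund inequality is standard, and for $\rset^\j$-valued random vectors one can either apply it componentwise (paying a dimension-dependent constant, acceptable here since $\j$ is fixed) or use the Hilbert-space Burkholder inequality directly, which keeps the constant independent of $\j$. Either route yields the claim; I would cite a standard reference (e.g., the Burkholder--Davis--Gundy inequality for Hilbert-space martingales) rather than reprove it, since the lemma is a classical tool used only as a black box in the estimates of \Cref{lem:ell-D-lim}.
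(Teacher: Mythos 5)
Your proof is correct. The paper itself gives no proof of this lemma --- it is stated and used purely as a classical black box (only the accompanying remark on monotonicity of $s\mapsto\norms{V}{s}$ is added) --- so there is no argument to compare against; your derivation, reducing the vector-valued case to the Hilbert-space Burkholder inequality and then applying Jensen's inequality to $u\mapsto u^{s/2}$ to extract the $\lo^{s/2}$ factor, together with the Minkowski--Jensen step for the factor $2$, is the standard and complete way to establish it.
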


\begin{remark}
    \label{rem:marcinZyg}
    As $s \mapsto \norms{V}{s}$ is non-decreasing, \Cref{lem:marcinZygm} implies that for any $s>0$, 
    $$ 
     \norms{\lo^{-1}\sum_{\ell=1}^\lo V_i-\PE[V_i]}{s} \leq M_{s\vee 2} \lo^{-1/2} \norms{V-\PE[V]}{s\vee 2}.
    $$        
\end{remark}
\begin{lemma}
    \label{lem:log}
    For every $r>0$, we have
    \begin{align}
        \left|\log r-(r-1)\right| &\leq |(r-1)\log r|, \label{eq:bound:log:one}\\
        \left|\log r-\lrb{(r-1)-\frac {(r-1)^2} 2 }\right| &\leq (r-1)^2|\log r|.\label{eq:bound:log:two}
    \end{align}
\end{lemma}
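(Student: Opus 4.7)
The plan is to reduce each inequality to the non-negativity (or non-positivity) of an explicit auxiliary function of $r$, verified by elementary calculus. A useful preliminary observation, used throughout, is that $(r-1)\log r \geq 0$ for all $r > 0$, since the two factors share signs on either side of $r = 1$. In particular $|(r-1)\log r| = (r-1)\log r$, and $(r-1)^2|\log r|$ equals $(r-1)^2\log r$ when $r \geq 1$ and $-(r-1)^2\log r$ when $r \leq 1$.

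For \eqref{eq:bound:log:one}, I would begin with the standard inequality $\log r \leq r - 1$, which gives $|\log r - (r-1)| = (r-1) - \log r$. The claim then becomes $r - 1 - \log r \leq (r-1)\log r$, i.e.\ $r - 1 \leq r\log r$. Setting $\psi(r) = r\log r - r + 1$, one has $\psi(1) = 0$ and $\psi'(r) = \log r$, so $r = 1$ is a global minimum of $\psi$ and $\psi \geq 0$ on $(0,\infty)$, as required.

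For \eqref{eq:bound:log:two}, I would introduce the auxiliary function $h(r) = \log r - (r-1) + (r-1)^2/2 - (r-1)^2 \log r$. Direct differentiation, after cancellation of the terms $\tfrac{1}{r} - 1 + (r-1) - \tfrac{(r-1)^2}{r}$, yields $h'(r) = -2(r-1)\log r$, which is $\leq 0$ for every $r > 0$ by the preliminary observation. Combined with $h(1) = 0$, this shows $h(r) \geq 0$ on $(0,1]$ and $h(r) \leq 0$ on $[1,\infty)$. To conclude, I would also verify that $f(r) := \log r - (r-1) + (r-1)^2/2$ has the same sign as $r - 1$: indeed $f(1) = 0$ and $f'(r) = (r-1)^2/r \geq 0$, so $f$ is nondecreasing. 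Since $(r-1)^2\log r$ also has the same sign as $r - 1$, the two terms in $h = f - (r-1)^2\log r$ agree in sign, and the one-sided bounds on $h$ translate directly into $|f(r)| \leq (r-1)^2|\log r|$ on each side of $r = 1$.

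The main ``obstacle,'' if one can call it that, is purely bookkeeping: converting the one-sided bounds on $h$ into an absolute-value inequality requires a careful case split $r \leq 1$ versus $r \geq 1$ together with the matching-sign argument for $f$ above. There is no deeper analytic content; the whole lemma is an elementary calculus estimate whose proof fits on half a page.
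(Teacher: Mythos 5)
Your proof is correct: the cancellation you invoke in $h'$ does occur, giving $h'(r)=-2(r-1)\log r\leq 0$, and together with $\psi'(r)=\log r$, the monotonicity of $f$ and the sign case split on either side of $r=1$, both \eqref{eq:bound:log:one} and \eqref{eq:bound:log:two} follow. However, your route differs from the paper's. The paper writes each left-hand side as a Taylor remainder in integral form for $f(t)=\log(1+t(r-1))$, namely $\log r-(r-1)=\pm(r-1)^2\int_0^1 \frac{t}{1+t(r-1)}\,\rmd t$ and $\log r-\lrb{(r-1)-\frac{(r-1)^2}{2}}=(r-1)^3\int_0^1 \frac{t^2}{1+t(r-1)}\,\rmd t$, and then simply bounds $t\leq 1$ (resp. $t^2\leq 1$) inside the integral, recognizing $\log r=(r-1)\int_0^1\frac{\rmd t}{1+t(r-1)}$; this yields both inequalities in one line each, with no case distinction and no sign analysis. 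Your argument replaces the integral representation by explicit auxiliary functions ($\psi$, $h$, $f$) and monotonicity, at the cost of the sign bookkeeping you acknowledge; what it buys is that it uses nothing beyond differentiation of elementary functions. The paper's method is more uniform (the same template gives remainder bounds of any order) and arguably shorter, but both are complete and elementary proofs of the lemma.
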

\begin{proof}
    Let $r>0$ and set $f(t)=\log(1+t(r-1))$. Since  $f(1)-\lrb{f(0)+f'(0)}=\int_0^1 [f'(t)-f'(0)]\rmd t $,
    we deduce by straightforward algebra that
    \begin{align} \label{eq:log:zero}
        \log r-(r-1)=A(r),\quad \mbox{where} \quad A(r)\eqdef(r-1)^2 \int_0^1 \frac {t} {1+t(r-1)} \rmd t.
    \end{align}
    Taking the absolute value of $A (r)$ and using $0\leq t\leq 1$ to bound the inner term in the integral in
    \eqref{eq:log:zero}, we obtain $|A(r)| \leq  |(r-1) \log r|$. Similarly, 
    $$ 
    f(1)-\lrb{f(0)+f'(0)+\frac {f''(0)} 2}=\int_0^1 [f'(t)-f'(0)-tf''(0)]\rmd t.
    $$
Hence, 
    \begin{align}
        \label{eq:log}
        \log r-\lrb{(r-1)-\frac {(r-1)^2} 2 }=B(r)\quad \mbox{where} \quad B(r)\eqdef(r-1)^3 \int_0^1 \frac {t^2} {1+t(r-1)} \rmd t.
    \end{align}
    Taking the absolute value of $B (r)$ and using $t^2\leq 1$ to bound the inner term in the integral in
    \eqref{eq:log}, we obtain $ |B(r)| \leq (r-1)^2 |\log r|$ which concludes the proof. 
\end{proof}

\begin{lemma}
    \label{lem:borne:log:U}
    Let $\seq{U_k}{k\in\nset}$ be a sequence of i.i.d. $\rset^+_*$-valued random variables and defined on a common probability space $(\Omega,\mcf,\PP)$. Assume that $\PE[U_1]=1$ and define 
    $$ 
    \bar S_\lo= \lo^{-1}\sum_{\ell=1}^\lo U_k\,,\quad \lo\in\nset_*\,, 
    $$
Then, there exists a constant $M$ such that for any $\lo$,  
\begin{enumerate}[(i)]
        \item $\norms{\log \bar S_\lo}{4}  \leq \norms{\log U_1}{4}+4$, \label{eq:bound:logU:one}\\
        \item $\norms{\log \bar S_\lo}{2}  \leq M \lo^{-1/2} \norms{U_1}{4} \lr{\norms{\log U_1}{4}+1}$. \label{eq:bound:logU:two}
\end{enumerate} 
\end{lemma}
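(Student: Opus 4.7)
The plan is to handle (i) and (ii) in turn, with (i) feeding directly into (ii).

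For (i), I would decompose $|\log \bar S_\lo| = \log^+ \bar S_\lo + \log^- \bar S_\lo$ and bound each piece separately in $L^4$, then conclude by Minkowski's inequality. The negative part is controlled via the concavity of $\log$: Jensen's inequality yields $\log \bar S_\lo \geq \lo^{-1}\sum_{\ell=1}^\lo \log U_\ell$, hence $\log^- \bar S_\lo \leq \lo^{-1}\sum_{\ell=1}^\lo \log^- U_\ell$, and Minkowski gives $\norms{\log^- \bar S_\lo}{4} \leq \norms{\log^- U_1}{4} \leq \norms{\log U_1}{4}$. The positive part is the interesting one: the key observation is the pointwise inequality $(\log^+ x)^4 \leq (4/e)^4 \, x$, valid for every $x>0$. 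This follows from studying $f(x)=(\log x)^4/x$ on $[1,\infty)$: its derivative $f'(x)=(\log x)^3(4-\log x)/x^2$ vanishes only at $x=1$ and $x=e^4$, and $f(e^4)=(4/e)^4$ is the global maximum. Applying this majorization to $\bar S_\lo$ and using the hypothesis $\PE[\bar S_\lo]=1$ gives $\norms{\log^+ \bar S_\lo}{4} \leq 4/e < 4$. Combining the two bounds yields (i).

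For (ii), I would start from \Cref{lem:log} inequality \eqref{eq:bound:log:one} applied to $r=\bar S_\lo$, which can be rewritten as $|\log \bar S_\lo| \leq |\bar S_\lo - 1|\,(1 + |\log \bar S_\lo|)$. Taking $L^2$-norms and applying Minkowski followed by Cauchy-Schwarz (in the form $\norms{XY}{2}\leq \norms{X}{4}\norms{Y}{4}$) yields
\begin{equation*}
\norms{\log \bar S_\lo}{2} \leq \norms{\bar S_\lo - 1}{2} + \norms{\bar S_\lo - 1}{4} \cdot \norms{\log \bar S_\lo}{4}.
\end{equation*}
I would then use Marcinkiewicz–Zygmund (\Cref{lem:marcinZygm}) to bound $\norms{\bar S_\lo - 1}{s} \leq 2 M_s \lo^{-1/2} \norms{U_1}{s} \leq 2 M_s \lo^{-1/2} \norms{U_1}{4}$ for $s\in\{2,4\}$, exploiting the monotonicity of $L^s$-norms and that $\norms{U_1}{1}=1$. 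The final $L^4$-norm of $\log \bar S_\lo$ is controlled by part (i), namely $\norms{\log \bar S_\lo}{4} \leq \norms{\log U_1}{4}+4$. Grouping terms gives the desired inequality with an explicit constant such as $M=2(M_2+4 M_4)$.

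The main difficulty lies in the $L^4$-bound on $\log^+ \bar S_\lo$ in (i), which must avoid importing an $\norms{U_1}{4}$ factor. The naive estimate $\log^+ x \leq x-1$ would yield $\norms{\log^+ \bar S_\lo}{4} \leq \norms{\bar S_\lo - 1}{4}$, forcing a dependence on $\norms{U_1}{4}$. The polynomial majorization $(\log^+ x)^4 \leq (4/e)^4\, x$ sidesteps this by leveraging only the first moment of $\bar S_\lo$, which equals $1$ by hypothesis; this is what enables an absolute constant on the right-hand side of (i) and, in turn, cleanly separates the $\norms{U_1}{4}$ and $\norms{\log U_1}{4}$ dependencies in (ii).
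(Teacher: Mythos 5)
Your proof is correct and takes essentially the same route as the paper's: for (i) both arguments combine Jensen's inequality on the lower side with a fourth-root-type majorization of $\log^+$ that uses only $\PE[\bar S_\lo]=1$ (your $(\log^+ x)^4\leq (4/\rme)^4\,x$ is just a sharpened form of the paper's $\log u\leq 4u^{1/4}$), and for (ii) your chain — \Cref{lem:log}, triangle inequality plus Cauchy--Schwarz, Marcinkiewicz--Zygmund, then part (i) — is exactly the paper's proof.
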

\begin{proof} We start with \ref{eq:bound:logU:one}. 
    Applying Jensen's inequality to the log function and for any
    positive $u$,  using the
    bounds, $\log u= 4 \log( u^{1/4} ) \leq 4(u^{1/4} -1) \leq 4 u^{1/4} $, we obtain: 
$$ 
 \lo^{-1} \sum_{\ell=1}^\lo \log U_k \leq \log(\bar S_\lo) \leq 4 \bar S_\lo^{1/4}.
$$
Hence, 
$$ 
 |\log \bar S_\lo| \leq \lo^{-1} \sum_{\ell=1}^\lo |\log U_k| + 4\bar S_\lo^{1/4}.
$$
Applying the triangular inequality for $\norms{\cdot}{4}$ and noting
that $\PE[S_\lo]=1$, we get  
$$ 
\norms{\log \bar S_\lo}{4} \leq \norms{\log U_1}{4} +4 \norms{S_\lo^{1/4}}{4}=\norms{\log U_1}{4} +4,
$$
which proves \ref{eq:bound:logU:one}. 

We now turn to \ref{eq:bound:logU:two}. According to \Cref{lem:log}, 
$$ 
 |\log \bar S_\lo| \leq |\bar S_\lo -1|+ |\bar S_\lo -1| \ |\log \bar S_\lo| .
$$
Using the triangular inequality for $\norms{\cdot}{2}$ and then the Cauchy-Schwarz inequality, 
$$ 
 \norms{\log \bar S_\lo}{2} \leq \norms{\bar S_\lo -1}{2}+ \norms{\bar S_\lo -1}{4} \ \norms{\log \bar S_\lo}{4} .
$$
Applying \ref{eq:bound:logU:one} to bound the last term and
Marcinkiewicz-Zygmund's bound to $\norms{\bar S_\lo -1}{k}$, $k \in
\{2,4\}$ and noting that $\norms{U_1}{2}\leq \norms{U_1}{4}$, we obtain \ref{eq:bound:logU:two}.   
\end{proof}

\begin{lemma}
    \label{lem:borne:autonorm}
    Let $\seq{(U_k,V_k)}{k\in\nset}$ be a sequence of i.i.d. $\rset^+_*\times \rset^\j$-valued random vectors and defined on a common probability space $(\Omega,\mcf,\PP)$. Assume that $\PE[U_1]=1$ and $\PE[V_1]=0$. Then, defining 
    $$ 
     W_\lo= \frac {\sum_{\ell=1}^\lo V_k} {\sum_{\ell=1}^\lo U_k}\,,\quad \lo\in\nset_*\,, 
    $$
    the following bounds hold :
    \begin{enumerate}[(i)]
    \item \label{item:autonorm:one} For every $\alpha>1$ and $\lo\geq 1$, $\norms{W_\lo}{\alpha} \leq \lo^{1/\alpha} \norms{V_1/U_1}{\alpha}$. 
    \item \label{item:autonorm:two} For every $\beta>0$ and $\alpha \geq 2\vee \beta$, there exists a constant $M$ such that for every $\lo\geq 1$ 
    $$ 
    \norms{W_\lo}{\beta} \leq M \lo^{-1/2+1/\alpha} \lrb{\lr{1+\norms{U_1}{\frac {\alpha \beta} {\alpha - \beta}\vee 2}} \norms{V_1/U_1}{\alpha}+ \norms{V_1}{\beta\vee 2}}.
    $$ 
    \item \label{item:autonorm:three} For every $\alpha,\beta >0$ such that $\alpha^{-1}+\beta^{-1}=1$, 
$$ 
 |\PE[W_\lo]| \leq \norms{1-\lo^{-1}\sum_{\ell=1}^\lo U_k}{\alpha} \norms{W_\lo}{\beta}.
$$
    \end{enumerate}
\end{lemma}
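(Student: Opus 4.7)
The three bounds are largely independent, so I would treat them in the order (i), (iii), (ii), with (ii) reusing the first two.

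For (i), the key representation is
$$
W_\lo \eqsp=\eqsp \sum_{k=1}^\lo \frac{V_k}{U_k}\cdot \frac{U_k}{\sum_{j=1}^\lo U_j},
$$
which displays $W_\lo$ as a convex combination of the random vectors $V_k/U_k$ with nonnegative random weights $w_k = U_k/\sum_j U_j$ summing to $1$. Jensen's inequality applied to the convex function $x \mapsto |x|^\alpha$ on $\rset^\j$ yields $|W_\lo|^\alpha \leq \sum_k w_k |V_k/U_k|^\alpha$; bounding $w_k \leq 1$ and taking expectations gives $\PE[|W_\lo|^\alpha] \leq \lo\, \PE[|V_1/U_1|^\alpha]$ by the i.i.d.\ assumption, from which (i) follows after taking $\alpha$-th roots.

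For (iii), set $\bar U_\lo=\lo^{-1}\sum U_k$ and $\bar V_\lo=\lo^{-1}\sum V_k$, so that $W_\lo \bar U_\lo=\bar V_\lo$ and $\PE[\bar V_\lo]=0$. Consequently $\PE[W_\lo \bar U_\lo]=0$, which allows one to write $\PE[W_\lo] = \PE[W_\lo(1 - \bar U_\lo)]$. The bound then follows immediately from Hölder's inequality with the conjugate exponents $\alpha, \beta$.

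Part (ii) is the real work; the starting point is the identity $W_\lo = \bar V_\lo + W_\lo(1 - \bar U_\lo)$, obtained by multiplying and dividing by $\bar U_\lo$. The triangle inequality in $\mathsf{L}^\beta$ gives
$$
\norms{W_\lo}{\beta} \leq \norms{\bar V_\lo}{\beta} + \norms{W_\lo(1-\bar U_\lo)}{\beta}.
$$
For the first term I would apply the Marcinkiewicz--Zygmund bound (\Cref{lem:marcinZygm} together with \Cref{rem:marcinZyg}, since $\PE[V_1]=0$) to obtain a factor of order $\lo^{-1/2}\norms{V_1}{\beta \vee 2}$. For the second, the main idea is to invoke Hölder with the exponents $\alpha$ and $q\eqdef \alpha\beta/(\alpha-\beta)$, whose reciprocals sum to $1/\beta$, to obtain
$$
\norms{W_\lo(1-\bar U_\lo)}{\beta} \leq \norms{W_\lo}{\alpha}\cdot\norms{1-\bar U_\lo}{q}.
$$
Part (i) bounds $\norms{W_\lo}{\alpha}$ by $\lo^{1/\alpha}\norms{V_1/U_1}{\alpha}$, while Marcinkiewicz--Zygmund applied to $\bar U_\lo - 1$ gives a factor $\lo^{-1/2}\norms{U_1-1}{q\vee 2} \leq \lo^{-1/2}(1+\norms{U_1}{q\vee 2})$. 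Collecting and using $\lo^{-1/2}\leq\lo^{-1/2+1/\alpha}$ for $\lo\geq 1$ produces the stated bound.

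The main obstacle is in (ii), more specifically the ``self-bootstrapping'' nature of the second term: $W_\lo$ itself reappears on the right-hand side, and the whole argument works only because (i) provides a \emph{stand-alone}, moment-free control of $\norms{W_\lo}{\alpha}$ at the cost of the slowly growing factor $\lo^{1/\alpha}$. Choosing the conjugate pair $(\alpha, \alpha\beta/(\alpha-\beta))$ and the truncation $\cdot\vee 2$ is precisely what makes the two contributions $\lo^{1/\alpha}\cdot\lo^{-1/2}$ and $\lo^{-1/2}$ collapse into the single rate $\lo^{-1/2+1/\alpha}$ stated in the lemma, and it is what forces the hypothesis $\alpha \geq 2\vee\beta$.
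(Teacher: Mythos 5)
Your proposal is correct and follows essentially the same route as the paper: part (i) via the convex-combination/Jensen argument, parts (ii)--(iii) via the decomposition $W_\lo=(1-\lo^{-1}\sum_{\ell}U_\ell)W_\lo+\lo^{-1}\sum_{\ell}V_\ell$, Hölder with the exponent pair $(\alpha,\alpha\beta/(\alpha-\beta))$, Marcinkiewicz--Zygmund, and the bound from (i) to control the self-referential term. The only differences are cosmetic (order of the parts, applying Jensen directly to $x\mapsto|x|^\alpha$ rather than after a separate triangle-inequality step).
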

\begin{proof}
\begin{enumerate}[(i)]
    \item 
    By the triangular inequality for the norm $|\cdot|$, 
    $$ 
    |W_\lo| \leq \sum_{\ell=1}^\lo \lr{\frac{U_k} {\sum_{\j=1}^\lo U_\j}} \left| \frac {V_k}{U_k}\right| .
    $$
    Raising to the power $\alpha$ and applying Jensen's inequality to the function $u\mapsto u^\alpha$ yields  
    $$ 
     |W_\lo|^\alpha  \leq \sum_{\ell=1}^\lo \lr{\frac{U_k} {\sum_{\j=1}^\lo U_\j}} \left| \frac {V_k}{U_k}\right|^\alpha \leq \sum_{\ell=1}^\lo \left| \frac {V_k}{U_k}\right|^\alpha.
    $$
    Taking the expectation and raising to the power $1/\alpha$ completes the proof. 
    \item Set $v=\alpha/\beta \geq 1$ and let $u=\alpha /(\alpha-\beta)$ so that $u^{-1}+v^{-1}=1$. 
Since by straightforward algebra, 
\begin{equation}
    \label{eq:decomp:biais}
    W_\lo=\lr{1-\lo^{-1}\sum_{\ell=1}^\lo U_k} W_\lo + \lo^{-1}\sum_{\ell=1}^\lo V_k ,
\end{equation}
we have by Holder's inequality
\begin{align*}
    \norms{W_\lo}{\beta}&=\norms{\lr{1-\lo^{-1}\sum_{\ell=1}^\lo U_k} W_\lo}{\beta} + \norms{\lo^{-1}\sum_{\ell=1}^\lo V_k}{\beta}\\
    & \leq \norms{1-\lo^{-1}\sum_{\ell=1}^\lo U_k}{\beta u}  \norms{W_\lo}{\beta v} + \norms{\lo^{-1}\sum_{\ell=1}^\lo V_k}{\beta}.
\end{align*}
Using \Cref{rem:marcinZyg} to bound the first and third term of the rhs and using the bound (i) for the second term complete the proof. 
\item Applying the decomposition \eqref{eq:decomp:biais} and considering that $\PE[V_1]=0$, we obtain 
$$ 
 \PE[W_\lo] =\PE\lrb{\lr{1-\lo^{-1}\sum_{\ell=1}^\lo U_k} W_\lo}.
$$
Holder's inequality applied to the rhs finishes the proof. 
\end{enumerate}

\end{proof}

\section{Derivations for the unobserved heterogeneity model}

\vspace{0.3cm}

\noindent \textit{The notations used for the expectations in this appendix are informal and adopted for clarity.}

\vspace{0.3cm}

The ELBO is defined for a given dataset $(\x_i)_{1\leq i \leq n}$ by the function:
\[
\theta \mapsto \sum_{i=1}^n \mathbb{E}_{p(\bz)}\left[\log p_\theta(\x_i|\bz)\right] ,
\]
where $p(z)=e^{-z}e^{-e^{-z}}$ and $p_\theta(x|z) = \frac{1}{\sqrt{2\pi}} \exp\left(-\frac{(x - (\theta + z))^2}{2}\right)$.
Hence, $\tilde{\theta}_n^1$ minimizes:
\[
\theta \mapsto \mathbb{E}_{\hat{p}(x)} \mathbb{E}_{p(z)}\left[(x - (\theta + z))^2\right] ,
\]
where $\hat{p}(\cdot)=\frac{1}{n} \sum_{i=1}^n \delta_{\{\x_i\}}$ is the empirical distribution, which is minimized at 
$$
\tilde{\theta}_n^1 = \mathbb{E}_{\hat{p}(x)}\mathbb{E}_{p(z)}\left[x-z\right] = \frac{1}{n} \sum_{i=1}^n \x_i - \gamma 
$$
since $\mathbb{E}_{p(z)}\left[z\right]$ is equal to Euler's constant $\gamma$.

\vspace{0.2cm}

To get the expected ELBO maximizer $\tilde{\theta}_\infty^1$, we simply need to replace the empirical expectation by the true one, which leads to:
$$
\tilde{\theta}_\infty^1 = \mathbb{E}_{p_{\theta^*}(x)}\left[x\right] - \gamma = \theta^* 
$$
since
\begin{align*}
\mathbb{E}_{p_{\theta^*}(x)}\left[x\right] & = \int_x x p_{\theta^*}(x) dx \\
& = \int_x x \left( \int_z p_{\theta^*}(x|z) p(z) dz \right) dx \\
& = \int_z \left( \int_x x p_{\theta^*}(x|z) dx \right) p(z) dz \\
& = \int_z \mathbb{E}_{p_{\theta^*}(x|z)}\left[ x \right] p(z) dz \\
& = \int_z \left(\theta^*+z\right) p(z) dz \\
& = \theta^* + \int_z z p(z) dz \\
& = \theta^* + \mathbb{E}_{p(z)}\left[ z \right]  \\
& = \theta^* + \gamma .
\end{align*}

If we use overlapping MSLE for $k=1$ with $\bz\sim p(z)$, we have $\hat{\theta}_n^1$ that minimizes:
$$
\theta \mapsto \frac{1}{n} \sum_{i=1}^n - \log p_\theta(\x_i|\bz) = \frac{1}{n} \sum_{i=1}^n (\x_i - (\theta + \bz))^2 + \text{cst} = \left( \theta - \left\{\frac{1}{n} \sum_{i=1}^n \x_i - \bz \right\} \right)^2 + \text{cst'} 
$$
where $\text{cst}$ and $\text{cst'}$ denote constants independent of $\theta$. So 
$$ 
\hat{\theta}_n^1 = \frac{1}{n} \sum_{i=1}^n \x_i - \bz 
$$
and the expectation w.r.t.\ $\bz$ is
$$
\mathbb{E}_{p(z)}\left[\hat{\theta}_n^1\right] =\frac{1}{n} \sum_{i=1}^n \x_i - \gamma .
$$

If we use independent MSLE for $k=1$ with i.i.d.\ $(\bz_i)_{1\leq i \leq n}\sim p(z)$, we have $\hat{\theta}_n^1$ that minimizes:
$$
\theta \mapsto \frac{1}{n} \sum_{i=1}^n - \log p_\theta(\x_i|\bz_i) = \frac{1}{n} \sum_{i=1}^n (\x_i - (\theta + \bz_i))^2 + \text{cst} = \left( \theta - \frac{1}{n} \sum_{i=1}^n (\x_i - \bz_i) \right)^2 + \text{cst'} 
$$
where $\text{cst}$ and $\text{cst'}$ denote once again constants independent of $\theta$. So 
$$ 
\hat{\theta}_n^1 = \frac{1}{n} \sum_{i=1}^n (\x_i - \bz_i) 
$$
and the expectation w.r.t.\ $\bz_i$'s is
$$
\mathbb{E}_{p(z_1),...,p(z_n)}\left[\hat{\theta}_n^1\right] = \frac{1}{n} \sum_{i=1}^n x_i - \gamma .
$$

\end{appendix}

\vspace{1cm}
\bibliography{biblio}      

\begin{thebibliography}{}

\bibitem[Blei et~al., 2017]{blei2017variational}
Blei, D.~M., Kucukelbir, A., and McAuliffe, J.~D. (2017).
\newblock Variational inference: A review for statisticians.
\newblock {\em Journal of the American Statistical Association}, 112(518):859--877.

\bibitem[Bornschein and Bengio, 2015]{bornschein2014reweighted}
Bornschein, J. and Bengio, Y. (2015).
\newblock Reweighted wake-sleep.
\newblock {\em International Conference on Learning Representations}.

\bibitem[Burda et~al., 2016]{IWAE2015}
Burda, Y., Grosse, R., and Salakhutdinov, R. (2016).
\newblock Importance weighted autoencoders.
\newblock {\em International Conference on Learning Representations}.

\bibitem[Cameron and Trivedi, 2005]{cameron2005microeconometrics}
Cameron, A.~C. and Trivedi, P.~K. (2005).
\newblock Microeconometrics: Methods and applications.
\newblock {\em Cambridge University Press}.

\bibitem[Daudel et~al., 2023]{daudel2023alpha}
Daudel, K., Benton, J., Shi, Y., and Doucet, A. (2023).
\newblock Alpha-divergence variational inference meets importance weighted auto-encoders: Methodology and asymptotics.
\newblock {\em Journal of Machine Learning Research}, 24(243):1--83.

\bibitem[Daudel et~al., 2021]{daudelDoucPortier2020}
Daudel, K., Douc, R., and Portier, F. (2021).
\newblock Infinite-dimensional gradient-based descent for alpha-divergence minimisation.
\newblock {\em Annals of Statistics}, 49(4):2250--2270.

\bibitem[Dhekane, 2021]{dhekane2021improving}
Dhekane, E.~G. (2021).
\newblock On improving variational inference with low-variance multi-sample estimators.
\newblock {\em MSc Thesis, University of Montreal}.

\bibitem[Domke and Sheldon, 2018]{domke2018importance}
Domke, J. and Sheldon, D.~R. (2018).
\newblock Importance weighting and variational inference.
\newblock {\em Advances in Neural Information Processing Systems}, 31.

\bibitem[Domke and Sheldon, 2019]{domke2019divide}
Domke, J. and Sheldon, D.~R. (2019).
\newblock Divide and couple: Using monte carlo variational objectives for posterior approximation.
\newblock {\em Advances in Neural Information Processing Systems}, 32.

\bibitem[Ferguson, 1996]{Ferguson1996}
Ferguson, T. (1996).
\newblock {\em A Course in Large Sample Theory}.
\newblock Routledge.

\bibitem[Geyer, 1994]{geyer1994convergence}
Geyer, C.~J. (1994).
\newblock On the convergence of {M}onte {C}arlo maximum likelihood calculations.
\newblock {\em Journal of the Royal Statistical Society: Series B (Methodological)}, 56(1):261--274.

\bibitem[Gourieroux et~al., 1996]{gourieroux1996simulation}
Gourieroux, C., Gourieroux, M., Monfort, A., and Monfort, D.~A. (1996).
\newblock {\em Simulation-based econometric methods}.
\newblock Oxford university press.

\bibitem[Huang and Courville, 2019]{huang2019note}
Huang, C.-W. and Courville, A. (2019).
\newblock Note on the bias and variance of variational inference.
\newblock {\em arXiv preprint arXiv:1906.03708}.

\bibitem[Huang et~al., 2019]{huang2019hierarchical}
Huang, C.-W., Sankaran, K., Dhekane, E., Lacoste, A., and Courville, A. (2019).
\newblock Hierarchical importance weighted autoencoders.
\newblock In {\em International Conference on Machine Learning}, pages 2869--2878.

\bibitem[Ipsen et~al., 2020]{ipsen2020not}
Ipsen, N.~B., Mattei, P.-A., and Frellsen, J. (2020).
\newblock not-miwae: Deep generative modelling with missing not at random data.
\newblock {\em arXiv preprint arXiv:2006.12871}.

\bibitem[Jordan et~al., 1999]{Jordan1999}
Jordan, M.~I., Ghahramani, Z., Jaakkola, T.~S., and Saul, L.~K. (1999).
\newblock An introduction to variational methods for graphical models.
\newblock {\em Machine Learning}, 37:183--233.

\bibitem[Kingma and Welling, 2014]{kingma2013auto}
Kingma, D.~P. and Welling, M. (2014).
\newblock Auto-encoding variational bayes.
\newblock {\em International Conference on Learning Representations}.

\bibitem[Klys et~al., 2018]{klys2018joint}
Klys, J., Bettencourt, J., and Duvenaud, D. (2018).
\newblock Joint importance sampling for variational inference.
\newblock {\em International Conference on Learning Representation Workshop Track}.

\bibitem[Le et~al., 2018]{le2018auto}
Le, T.~A., Igl, M., Rainforth, T., Jin, T., and Wood, F. (2018).
\newblock Auto-encoding sequential {M}onte {C}arlo.
\newblock {\em International Conference on Learning Representations}.

\bibitem[Lee, 1992]{lee1992efficiency}
Lee, L.-F. (1992).
\newblock On efficiency of methods of simulated moments and maximum simulated likelihood estimation of discrete response models.
\newblock {\em Econometric Theory}, 8(4):518--552.

\bibitem[Lee, 1995]{lee1995asymptotic}
Lee, L.-F. (1995).
\newblock Asymptotic bias in simulated maximum likelihood estimation of discrete choice models.
\newblock {\em Econometric Theory}, 11(3):437--483.

\bibitem[Lerman and Manski, 1981]{lerman1981use}
Lerman, S. and Manski, C. (1981).
\newblock On the use of simulated frequencies to approximate choice probabilities.
\newblock {\em Structural Analysis of Discrete Data with Econometric Applications}, 10:305--319.

\bibitem[Li{\'e}vin et~al., 2020]{lievin2020optimal}
Li{\'e}vin, V., Dittadi, A., Christensen, A., and Winther, O. (2020).
\newblock Optimal variance control of the score-function gradient estimator for importance-weighted bounds.
\newblock {\em Advances in Neural Information Processing Systems}, 33:16591--16602.

\bibitem[Maddison et~al., 2017]{maddison2017filtering}
Maddison, C.~J., Lawson, J., Tucker, G., Heess, N., Norouzi, M., Mnih, A., Doucet, A., and Teh, Y. (2017).
\newblock Filtering variational objectives.
\newblock {\em Advances in Neural Information Processing Systems}, 30.

\bibitem[Mattei and Frellsen, 2019]{mattei2019miwae}
Mattei, P.-A. and Frellsen, J. (2019).
\newblock Miwae: Deep generative modelling and imputation of incomplete data sets.
\newblock In {\em International Conference on Machine Learning}, pages 4413--4423. PMLR.

\bibitem[Mattei and Frellsen, 2022]{mattei2022uphill}
Mattei, P.-A. and Frellsen, J. (2022).
\newblock Uphill roads to variational tightness: Monotonicity and {M}onte {C}arlo objectives.
\newblock {\em arXiv preprint arXiv:2201.10989}.

\bibitem[Mayer et~al., 2020]{mayer2020missdeepcausal}
Mayer, I., Josse, J., Raimundo, F., and Vert, J.-P. (2020).
\newblock Missdeepcausal: Causal inference from incomplete data using deep latent variable models.
\newblock {\em arXiv preprint arXiv:2002.10837}.

\bibitem[McFadden, 1989]{mcfadden1989method}
McFadden, D. (1989).
\newblock A method of simulated moments for estimation of discrete response models without numerical integration.
\newblock {\em Econometrica: Journal of the Econometric Society}, pages 995--1026.

\bibitem[Naesseth et~al., 2018]{naesseth2018variational}
Naesseth, C., Linderman, S., Ranganath, R., and Blei, D. (2018).
\newblock Variational sequential {M}onte {C}arlo.
\newblock In {\em International conference on artificial intelligence and statistics}, pages 968--977. PMLR.

\bibitem[Nowozin, 2018]{nowozin2018debiasing}
Nowozin, S. (2018).
\newblock Debiasing evidence approximations: On importance-weighted autoencoders and jackknife variational inference.
\newblock In {\em International Conference on Learning Representations}.

\bibitem[Pakes, 1986]{pakes1986patents}
Pakes, A. (1986).
\newblock Patents as options: Some estimates of the value of holding european patent stocks.
\newblock Technical report, National Bureau of Economic Research.

\bibitem[Pakes and Pollard, 1989]{pakes1989simulation}
Pakes, A. and Pollard, D. (1989).
\newblock Simulation and the asymptotics of optimization estimators.
\newblock {\em Econometrica: Journal of the Econometric Society}, pages 1027--1057.

\bibitem[Rainforth et~al., 2018a]{rainforth2018nesting}
Rainforth, T., Cornish, R., Yang, H., Warrington, A., and Wood, F. (2018a).
\newblock On nesting {M}onte {C}arlo estimators.
\newblock In {\em International Conference on Machine Learning}, pages 4267--4276. PMLR.

\bibitem[Rainforth et~al., 2018b]{rainforth18b}
Rainforth, T., Kosiorek, A., Le, T.~A., Maddison, C., Igl, M., Wood, F., and Teh, Y.~W. (2018b).
\newblock Tighter variational bounds are not necessarily better.
\newblock In Dy, J. and Krause, A., editors, {\em Proceedings of the 35th International Conference on Machine Learning}, volume~80 of {\em Proceedings of Machine Learning Research}, pages 4277--4285. PMLR.

\bibitem[Rezende et~al., 2014]{rezende2014stochastic}
Rezende, D.~J., Mohamed, S., and Wierstra, D. (2014).
\newblock Stochastic backpropagation and approximate inference in deep generative models.
\newblock In {\em International conference on machine learning}, pages 1278--1286. PMLR.

\bibitem[Royden and Fitzpatrick, 2015]{roydenRA}
Royden, H. and Fitzpatrick, P. (2015).
\newblock {\em Real Analysis}.
\newblock 4th edition.

\bibitem[Sung and Geyer, 2007]{sung2007monte}
Sung, Y.~J. and Geyer, C.~J. (2007).
\newblock Monte {C}arlo likelihood inference for missing data models.
\newblock {\em The Annals of Statistics}, 35(3):990--1011.

\bibitem[Train, 2003]{train2003discrete}
Train, K.~E. (2003).
\newblock {\em Discrete Choice Methods with Simulation}.
\newblock Cambridge University Press.

\bibitem[Tucker et~al., 2019]{tucker2018doubly}
Tucker, G., Lawson, D., Gu, S., and Maddison, C.~J. (2019).
\newblock Doubly reparameterized gradient estimators for {M}onte {C}arlo objectives.
\newblock {\em International Conference on Learning Representations}.

\bibitem[Van~der Vaart, 1998]{VanDerVaart1999}
Van~der Vaart, A.~W. (1998).
\newblock {\em Asymptotic Statistics}.
\newblock Cambridge Series in Statistical and Probabilistic Mathematics. Cambridge University Press.

\end{thebibliography}
\end{document}